\documentclass{amsart}

\usepackage{amssymb}
\usepackage[cp1250]{inputenc}
\usepackage{picture}
\usepackage{color}
\usepackage{tikz}
\usepackage{hyperref}
\usepackage{tikz}
\usetikzlibrary{matrix}

\textwidth=480pt
\textheight=695pt
\oddsidemargin=-5pt
\evensidemargin=-5pt
\topmargin=-15pt

\newtheorem{theorem}{Theorem}[section]
\newtheorem{lemma}[theorem]{Lemma}
\newtheorem{example}[theorem]{Example}

\newtheorem{proposition}[theorem]{Proposition}
\newtheorem{problem}[theorem]{Problem}
\newtheorem{corollary}[theorem]{Corollary}

\newtheorem{definition}[theorem]{Definition}

\newcommand{\N}{\mathbb N}

\newcommand{\R}{\mathbb R}

\linespread{1}

\author{Szymon G\l \c ab}
\address{Institute of Mathematics, \L \'od\'z University of Technology,
W\'olcza\'nska 215, 93-005 \L \'od\'z, Poland}
\email {szymon.glab@p.lodz.pl}

\author{Jacek Marchwicki}
\address{
Chair of Complex Analysis,
Faculty of Mathematics and Computer Science,
University of Warmia and Mazury in Olsztyn,
S\l oneczna 54,
10-710 Olsztyn,
Poland}
\email {marchewajaclaw@gmail.com}


\title[Set of uniqueness for Cantorvals]{Set of uniqueness for Cantorvals }

\subjclass[2010]{Primary: 40A05; Secondary: 11K31} 
\keywords{Cantorval, Cantor set, purely atomic measure, achievement set, set of subsums, absolutely convergent series, set of uniqueness}

\begin{document}

\begin{abstract}
The main result is that the celebrated Guthrie-Nymann's Cantorval has comeager set of uniqueness. On the other hand many other Cantorvals have meager set of uniqueness.
\end{abstract}

\maketitle 

\section{Introduction}
A Cantorval is a subset of the real line which shares properties of the Cantor set as well as of an interval. Its construction reminds that of Cantor set, but it is regularly closed, that means it is the closure of its interior. To construct a Cantorval one mimics the construction of the ternary Cantor set. The difference is that in odd steps one removes middle intervals, while in even steps remains them. This is the way to produce a symmetric Cantorval of an M-Cantorval, which we call briefly a Cantorval.

Cantorvals appear naturally when one consider arithmetic sum of two Cantor sets, see \cite{AC,MO,MMR}. It turns out that the range of a purely atomic finite measure, or equivalently a set of subsums of an absolutely convergent series, can be a Cantorval. In this paper we study the sets of uniqueness of Cantorvals in their measure or series representation. It is worth to mention that not every Cantorval has such representation, \cite{BFGPS}. Having a purely atomic probabilistic measure $\mu$, a real number $t\in[0,1]$ is unique, if there is exactly one event $A$ with $\mu(A)=t$. Set of uniqueness of $\text{rng}(\mu)\subseteq[0,1]$ consists of all unique numbers.   

We show that the set of uniqueness has an empty interior. The main result is that the celebrated Guthrie-Nymann's Cantorval has comeager set of uniqueness. In the opposite to unique values are those which appear continuum many times, called $\mathfrak{c}$-points. We observe that any Cantorval can be enlarged to a Cantorval with $\mathfrak{c}$-points. We study minimal Cantorvals, that is Cantovals which cannot be essentially shrunk. Finally, we characterize achievement sets generated by semi-fast convergent sequence which are slim. An achievement set is slim if it has a representation without $\mathfrak{c}$-points.   

\section{Preliminaries}
By $E(x_n)$ we denote the set of all subsums of series $\sum_{n=1}^{\infty}x_n$, that is $$E(x_n)=\{\sum_{n=1}^{\infty}\varepsilon_nx_n : (\varepsilon_n)\in\{0,1\}^{\mathbb{N}}\}=\{\sum_{n\in A}x_n : A\subset\mathbb{N}\}.$$ Jones in \cite{Jones} called $E(x_n)$ as achievement set of the series $\sum_{n=1}^{\infty}x_n$. In a special interest will be the set $U(x_n)\subset E(x_n)$ of all of that points which has unique representation, that is $x\in U(x_n)$ if there is only one sequence $(\varepsilon_n)\in\{0,1\}^{\mathbb{N}}$ (respectively only one set $A\subset\mathbb{N}$) such that $x=\sum_{n=1}^{\infty}\varepsilon_nx_n$ ($x=\sum_{n\in A}x_n$). We will also call any element $x\in U(x_n)$ as unique.
Let $\sum_{n=1}^{\infty}x_n$ be absolutely summable. The function $\mu : \{0,1\}^\N\ni(\varepsilon_n)\mapsto\sum_{n=1}^\infty\varepsilon_nx_n$ is continuous mapping from the Cantor space $\{0,1\}^\N$ to the real line. Therefore $\mu^{-1}(t)$ is a closed subset of a Polish space $\{0,1\}^\N$, so the cardinality $\vert\mu^{-1}(t)\vert$ belongs to the set $\{0,1,2,\dots,\omega,\mathfrak{c}\}$ where $\omega$ stands for the first infinite cardinal while $\mathfrak{c}$ for the continuum.
 In the whole paper when we count number of representations for some point $x$ we will often write shortly that $x$ is $1$ - point(respectively $2, 3, \ldots, \omega, \mathfrak{c}$) instead of writing that $x$ has $1$(respectively $2, 3, \ldots, \omega, \mathfrak{c}$) representation. 
In the paper \cite{cardfun} the authors called $ f :E(x_n)\rightarrow \{1,2, 3, \ldots, \omega, \mathfrak{c}\}$ defined as $f(t)=\vert \{ A:  \sum_{n\in A}x_n=t\}\vert$ as the cardinal function. We will also say that $f$ is cardinal function for the sequence $(x_n)$, for the series $\sum_{n=1}^{\infty}x_n$ or for the achievement set $E(x_n)$.
The preimage of the cardinal function $f^{-1}(1)$ (respectively $2, 3, \ldots, \omega, \mathfrak{c}$) are equal to the set of all $1$-points (respectively $2, 3, \ldots, \omega, \mathfrak{c}$).  
Since $E(\vert x_n\vert)=E(x_n)+\sum_{n=1}^{\infty}x_n^{-}$ and $f(x-\sum_{n=1}^{\infty}x_n^{-})=g(x)$ for each $x\in E(\vert x_n\vert)$, where $f$ and $g$ are the cardinal functions for $\sum_{n=1}^{\infty}x_n$ and $\sum_{n=1}^{\infty}\vert x_n\vert$ respectively, we may consider only positive terms.
A simple observation shows that for an absolutely convergent series by rearranging its terms we do not affect the set $E(x_n)$ neither the cardinal function. Hence in the whole paper we will assume that $(x_n)$ is nonincreasing. 
First paper, where achievement set was considered is that of Kakeya, see \cite{Kakeya}. The author proved the following
\begin{theorem}\label{Kakeya}
If $\sum_{n=1}^{\infty}x_n$ is absolutely convergent with infinite many nonzero terms, then 
\begin{enumerate}
\item $E(x_n)$ is a finite union of closed intervals iff $x_k\leq r_k=\sum_{n=k+1}^{\infty}x_n$ for all but finitely many natural $k$
\item $E(x_n)$ is homeomorphic to a Cantor set, if $x_k>r_k=\sum_{n=k+1}^{\infty}x_n$ for all $k\in\mathbb{N}$ 
\end{enumerate} 
\end{theorem}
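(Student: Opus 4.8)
The plan is to reduce both parts to a single structural observation about \emph{gaps} and then read each conclusion off it. By the standing assumptions we may take $(x_n)$ to be positive and nonincreasing, and $E(x_n)$ is compact, being the continuous image of $\{0,1\}^\N$ under $\mu$; write $S=\sum_n x_n$, so that $E(x_n)\subseteq[0,S]$ with $0,S\in E(x_n)$.

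The crucial lemma I would isolate first is: \textbf{if $x_k>r_k$, then the open interval $(r_k,x_k)$ is disjoint from $E(x_n)$}, while its endpoints $r_k=\sum_{n>k}x_n$ and $x_k$ both lie in $E(x_n)$. To prove it I split the representations according to the first $k-1$ coordinates, writing $E(x_n)=\bigcup_{\varepsilon_1,\dots,\varepsilon_{k-1}}\big(c_\varepsilon+E_k\big)$, where $c_\varepsilon=\sum_{i<k}\varepsilon_i x_i$ and $E_k$ is the achievement set of the tail $(x_n)_{n\ge k}$. The set $E_k$ splits further by the value of $\varepsilon_k$ into a part contained in $[0,r_k]$ (with maximum $r_k$) and a part contained in $[x_k,x_k+r_k]$ (with minimum $x_k$), so $x_k>r_k$ leaves the hole $(r_k,x_k)$ in $E_k$; and since every nonzero $c_\varepsilon$ is at least $x_{k-1}\ge x_k$, no other translate reaches into $(r_k,x_k)$. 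This is precisely where I expect the \emph{main obstacle} to lie: one must rule out that the copies sitting above fill the hole, and this is exactly where the monotonicity of $(x_n)$ is used decisively.

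Granting the lemma, part (2) is quick. When $x_k>r_k$ for every $k$, I would show that $\mu$ is injective: if two distinct $0$--$1$ sequences first differ at position $k$, say $\varepsilon_k=1$ and $\delta_k=0$, then their tails from $k$ are forced to be equal by the agreement of the prefixes, yet $\sum_{n\ge k}\varepsilon_n x_n\ge x_k>r_k\ge\sum_{n>k}\delta_n x_n=\sum_{n\ge k}\delta_n x_n$, a contradiction. Thus $\mu$ is a continuous bijection from the compact Cantor space $\{0,1\}^\N$ onto the Hausdorff space $E(x_n)$, hence a homeomorphism, and $E(x_n)$ is homeomorphic to a Cantor set.

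For part (1), the implication $(\Leftarrow)$ is handled by a greedy algorithm. Assuming $x_k\le r_k$ for all $k>N$, I would first prove $E\big((x_n)_{n>N}\big)=[0,r_N]$: given $t\in[0,r_N]$, define $\varepsilon_n=1$ exactly when this keeps the partial sum $s_n$ below $t$, and verify the invariant $0\le t-s_n\le r_n$ by induction, the hypothesis $x_n\le r_n$ being used precisely in the step where a coordinate is set to $0$; since $r_n\to 0$, the partial sums converge to $t$. Decomposing $E(x_n)$ into the $2^{N}$ translates $c_\varepsilon+[0,r_N]$ then exhibits it as a finite union of closed intervals. For $(\Rightarrow)$ I argue by contraposition: if $x_k>r_k$ for infinitely many $k$, the lemma produces infinitely many gaps $(r_k,x_k)$, which are pairwise disjoint because $k<k'$ forces $x_{k'}\le r_k$; the retained endpoints $x_k$ then lie in distinct connected components, so $E(x_n)$ has infinitely many components and cannot be a finite union of closed intervals.
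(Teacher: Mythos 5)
Your proposal is mathematically sound, but note that the paper itself gives no proof of this statement: it is quoted as Kakeya's classical theorem and attributed to \cite{Kakeya}, so there is nothing internal to compare against step by step. Judged on its own merits, your argument is complete and correct. The ``crucial lemma'' (that $x_k>r_k$ forces the gap $(r_k,x_k)$, with both endpoints in $E(x_n)$) is proved correctly: the decomposition $E(x_n)=\bigcup_\varepsilon\bigl(c_\varepsilon+E_k\bigr)$ together with the observation that every nonzero $c_\varepsilon$ is at least $x_{k-1}\geq x_k$ does rule out other translates filling the hole, and this is exactly where monotonicity (which the paper's conventions license) is needed. The injectivity argument for part (2) is right, and the final step matches the paper's own Proposition \ref{onlycantor}(2) (continuous injection on a compact space is a homeomorphism). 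The greedy algorithm for the $(\Leftarrow)$ direction of part (1), with the invariant $0\leq t-s_n\leq r_n$ maintained using $x_n\leq r_n$ precisely when a coordinate is set to $0$, is the standard and correct way to show the tail fills $[0,r_N]$; the contrapositive direction via infinitely many pairwise disjoint gaps (disjoint because $k<k'$ gives $x_{k'}\leq r_k$) correctly yields infinitely many components. It is also worth remarking that your gap lemma is the same phenomenon the paper later imports as the Third Gap Lemma (Lemma \ref{thirdgaplemma}), so your self-contained proof is fully consistent with the toolkit the paper relies on elsewhere.
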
  
Moran in \cite{M1} called series, for which $x_k>r_k$  for all $k\in\mathbb{N}$ as quickly convergent.
Not only is a quick convergence a sufficient condition for $E(x_n)$ to be homeomorphic to the Cantor set, but it also sufficies to $E(x_n)=U(x_n)$. However both conditions $E(x_n)$ being homeomorphic to the Cantor set and what is much more surpring equality $E(x_n)=U(x_n)$ may hold even when  $x_{2n+1}<r_{2n+1}$ for all $n\in\mathbb{N}$, see \cite{cardfun}. 

 On the other hand $x_k\leq r_k$  for all $k\in\mathbb{N}$ if and only if $E(x_n)$ is an interval. Such series are called slowly convergent and the sequence of terms $(x_n)$ is called interval filling. There are many papers dedicated to interval filling sequences, see \cite{DJK}, \cite{DK}, \cite{DKS}. In particular in \cite{DJK} the authors considered series for which $U(x_n)$ is the smallest as possible, that is $U(x_n)=\{0,\sum_{n=1}^{\infty} x_n\}$ and gave a nice and easily verified, sufficient condition for that. Namely the sequence $(x_n)$ should remain interval filling after removing any one of its terms. Such sequences were called in \cite{DJK} as lockers.
 It is worth to mention that there is another paper on particular kind of slowly convergent series. In \cite{Erdos} the authors considered geometric series with ratio $1>q>\frac{1}{2}$. They were mostly interested in the problem how the number of subseries which sum equals  $1$ depends on $q$.

Back to the beginning Kakeya claimed that for an absolutely  convergent with infinite many nonzero terms the set $E(x_n)$ is either a finite union of closed intervals or a set homeomorphic to a Cantor set. It appears that he was wrong and due to Guthrie and Nymann we know that there is one more possible form. 
\begin{theorem}\label{GNCantorval}
For an absolutely convergent series $\sum_{n=1}^{\infty}x_n$ with infinite many nonzero terms, the set $E(x_n)$ is one of the following: a finite sum of closed intervals, homeomorphic to a Cantor set or a Cantorval, that is a set homeomorphic to $E(y_n)$ for $y_{2n-1}=\frac{3}{4^n}$,  $y_{2n}=\frac{2}{4^n}$ for all $n\in\mathbb{N}$.
\end{theorem}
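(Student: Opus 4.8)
The plan is to reduce everything to Kakeya's dichotomy (Theorem \ref{Kakeya}) together with one elementary interior-versus-connectedness dichotomy for closed subsets of the line. As the preliminaries allow, I assume $(x_n)$ is positive and nonincreasing. Write $r_k=\sum_{n>k}x_n$, set $E=E(x_n)$ and $E_k=E(x_{k+1},x_{k+2},\dots)\subseteq[0,r_k]$, and record the self-similar identities $E_{k-1}=E_k\cup(x_k+E_k)$ and $E=\bigcup_{A\subseteq\{1,\dots,k\}}\big(\sum_{i\in A}x_i+E_k\big)$. Since the series has infinitely many nonzero terms, flipping finitely many far-out coordinates shows every point of $E$ is a limit of other points of $E$; so $E$ is compact (a continuous image of $\{0,1\}^{\N}$, as noted above) and perfect. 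Put $P=\{k:x_k>r_k\}$.

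If $P$ is finite, Theorem \ref{Kakeya}(1) gives that $E$ is a finite union of closed intervals, the first alternative; so assume $P$ is infinite, whence by Theorem \ref{Kakeya}(1) again $E$ is \emph{not} a finite union of intervals. I now split on whether $E$ has empty interior, using the fact that a closed subset of $\R$ has empty interior if and only if it is totally disconnected (a nondegenerate component would be an interval contributing interior points, and conversely a nondegenerate interval witnesses nonempty interior). In the empty-interior case $E$ is compact, perfect and totally disconnected, hence homeomorphic to the Cantor set by Brouwer's characterization, the second alternative. Note this case genuinely arises for infinite $P$, as the example with $x_{2n+1}<r_{2n+1}$ cited in the introduction shows.

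The remaining case, $P$ infinite with $\operatorname{int}E\neq\emptyset$, is where $E$ should be a Cantorval, and is the step I expect to be the main obstacle. First I would verify that $E$ is regularly closed: given $p=\sum\varepsilon_n x_n\in E$ and $\varepsilon>0$, choose $k$ with $r_k<\varepsilon$; since $E$ is a finite union of translates of $E_k$ and a finite union of closed totally disconnected sets is again totally disconnected, nonempty interior of $E$ forces nonempty interior of $E_k$, so $E_k$ contains an interval, and translating it by $\sum_{n\le k}\varepsilon_n x_n$ produces an interval of $E$ within $\varepsilon$ of $p$; hence $p\in\overline{\operatorname{int}E}$ and $E=\overline{\operatorname{int}E}$. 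Next, each $k\in P$ contributes the complementary interval $(r_k,x_k)$ to $E_{k-1}$, and since $r_k\to0$ these occur at arbitrarily small scales; a bookkeeping argument tracking which of the translates $\sum_{i\in A}x_i+(r_k,x_k)$ survive in $E=\bigcup_{A\subseteq\{1,\dots,k-1\}}\big(\sum_{i\in A}x_i+E_{k-1}\big)$ shows that the gaps of $E$ are dense in $E$. Thus $E$ is a compact regularly closed set whose bricks (nondegenerate components) and gaps are both dense and interleaved in the self-similar pattern of the Cantorval construction, and I would conclude via the topological characterization of M-Cantorvals (as in \cite{MO}) that $E$ is homeomorphic to any such set, in particular to the standard $E(y_n)$ with $y_{2n-1}=3/4^n$, $y_{2n}=2/4^n$. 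The crux is not any single inequality but matching these structural features to the hypotheses of that classification — equivalently, if one builds the homeomorphism by hand, carrying out the level-by-level correspondence between the two constructions while respecting the order of bricks and gaps.
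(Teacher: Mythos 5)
There is no internal proof in the paper to compare against: Theorem \ref{GNCantorval} is quoted from \cite{GN}, and the paper explicitly defers the correct proof to \cite{NS0}. Judged on its own, your attempt handles the first two alternatives correctly: Kakeya's criterion (Theorem \ref{Kakeya}) in its ``iff'' form settles the case where $P=\{k:x_k>r_k\}$ is finite; compactness and perfectness of $E$ are argued soundly; Brouwer's characterization disposes of the empty-interior case; and your regular-closedness argument in the remaining case is correct and rather elegant (a finite union of closed sets with empty interior has empty interior, so each $E_k$ inherits nonempty interior from $E$, producing intervals of $E$ at scale $r_k$ near every point).

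The genuine gap is in the final case, exactly the step you yourself flag as the crux. First, the stated conclusion of your ``bookkeeping argument'' --- that the gaps of $E$ are dense in $E$ --- is false for \emph{any} set with nonempty interior, the Guthrie--Nymann Cantorval included: an interior point has a neighbourhood meeting no gap. What the Mendes--Oliveira characterization \cite{MO} actually needs is the interleaving condition: every gap of $E$ is accumulated, on each side, both by gaps and by nontrivial components of $E$ (equivalently, endpoints of bricks are accumulated by trivial components). Second, the properties you do establish --- compact, perfect, regularly closed, gaps at arbitrarily small scales --- do not imply that condition. The set $[0,1]\cup\bigcup_{n\geq 1}\bigl[1+\tfrac{1}{2n+1},\,1+\tfrac{1}{2n}\bigr]$ enjoys all of them, yet its gaps accumulate only at the single point $1$, so it is not an M-Cantorval and not homeomorphic to $E(y_n)$. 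Of course this set is not an achievement set, but that is precisely the point: excluding such configurations requires using the arithmetic structure $E=\bigcup_{A\subseteq\{1,\dots,k\}}\bigl(\sum_{i\in A}x_i+E_k\bigr)$ to determine which gaps of the translated copies of $E_k$ survive in $E$ (a gap of one translate may be covered by another translate) and to show that the surviving gaps, together with bricks, accumulate on both sides of every gap. This survival-and-accumulation analysis is the entire content of the hard case; it is where the original argument of \cite{GN} went wrong and what \cite{NS0} was written to repair, so it cannot be left as bookkeeping.
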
 
Theorem \ref{GNCantorval} was first published in \cite{GN}, but the correct proof was given in \cite{NS0}.
The set $E(y_n)$ is called the Guthrie Nymann Cantorval. It is obtained for a series belonging to a multigeometric class, that is of the form $(x_n)=(c_1,c_2,\ldots,c_m;q)=(c_1q,c_2q,\ldots,c_kq,c_1q^2, c_2q^2,\ldots,c_kq^2,c_1q^3,\ldots)$. Using that notion the Guthrie and Nymann's Cantorval can be described as $E(3,2;\frac{1}{4})$. In special interest we will have a set $\Sigma=E(c_1,\ldots,c_m)$, that is $\Sigma=  \{\sum_{n=1}^{m}\varepsilon_nc_n : (\varepsilon_n)\in\{0,1\}^{m}\}$. Then $E(c_1,c_2,\ldots,c_m;q)=\{\sum_{n=1}^{\infty}y_nq^n : (y_n)\in\Sigma^{\infty}\}$. Multigeometric series were studied in \cite{BBFS}, \cite{BBGS} and \cite{BFS}.

Note that the negative answer for Kakeya's hypothesis was obtained before Guthrie and Nymann's paper.
First counterexample was given without proof by Weinstein and Shapiro in \cite{WS}.
In \cite{F} Ferens constructed a purely atomic finite measure $\mu$, and proved that its range is a Cantorval. The theory of achievement sets and purely atomic  finite measure coincide. Indeed we may assume that $\mu$ is defined on $\mathbb{N}$. Then $rng(\mu)=\{\mu(A) : A\subset\mathbb{N}\}=E(x_n)$, where the terms of our series are the values of measure on singletons, that is $x_n=\mu(\{n\})$ for all $n\in\mathbb{N}$. Hence we may say that Ferens observed that $E(7,6,5,4,3;\frac{2}{27})$ is a Cantorval.

Note that by a condition given in \cite{DJK} we may construct plenty of  various Examples of series $\sum_{n=1}^{\infty}x_n$ for which $E(x_n)$ is a closed interval and $U(x_n)$ contains only two points. On the other hand we will show that if $E(x_n)$ is a Cantorval, then $U(x_n)$ is infinite. We use the following Lemma, see \cite{BFPW}. 

\begin{lemma}\label{thirdgaplemma}
(Third Gap Lemma) Suppose that $(a,b)$ is a gap in the range $R$ such that for
any gap $\left( a_{1},b_{1}\right) $ with $b_{1}<a$ we have $b-a>b_{1}-a_{1}$
(in other words $\left( a,b\right) $ is the longest gap from the left). Then for some $k\in \mathbb{N}$  we have $b=x_{k}$  and $a=r_{k}$.
\end{lemma}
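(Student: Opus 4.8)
The plan is to first describe \emph{all} gaps of the range $R=E(x_n)$ and then single out the leftmost longest one. Write $s_F=\sum_{n\in F}x_n$ for finite $F\subseteq\mathbb{N}$ and $E_N=E(x_1,\dots,x_N)$, a finite set with $R=E_N+E(x_{N+1},x_{N+2},\dots)\subseteq E_N+[0,r_N]$. First I would establish the Cantor-type representation
\[
R=\bigcap_{N=0}^{\infty}\bigl(E_N+[0,r_N]\bigr).
\]
The inclusion $\subseteq$ is clear. For $\supseteq$, if $c$ lies in every $E_N+[0,r_N]$ then for each $N$ there is $F_N\subseteq\{1,\dots,N\}$ with $s_{F_N}\in[c-r_N,c]$; since $r_{N-1}=x_N+r_N$, these feasible partial choices are closed under restriction and hence form an infinite binary tree, and any infinite branch (K\"onig's lemma) yields $\varepsilon\in\{0,1\}^{\mathbb{N}}$ whose partial sums lie in $[c-r_N,c]\to\{c\}$, so $c\in R$.

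Next I would read off the gaps from this representation. Because $E_{N+1}=E_N+\{0,x_{N+1}\}$ and $x_{N+1}+r_{N+1}=r_N$, one gets $E_{N+1}+[0,r_{N+1}]=E_N+\bigl([0,r_{N+1}]\cup[x_{N+1},r_N]\bigr)$; the two subintervals are disjoint precisely when $x_{N+1}>r_{N+1}$, in which case each block $[s_F,s_F+r_N]$ ($s_F\in E_N$) loses its open middle $(s_F+r_{N+1},s_F+x_{N+1})$. Both endpoints of such a middle, namely $s_F+r_{k}=\sum_{n\in F\cup\{k+1,k+2,\dots\}}x_n$ and $s_F+x_k=\sum_{n\in F\cup\{k\}}x_n$ (with $k=N+1$), already belong to $R$, so the carved interval is never re-merged at a later stage and survives as a genuine maximal gap. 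Hence the gaps of $R$ are exactly the intervals
\[
(s_F+r_k,\;s_F+x_k),\qquad F\subseteq\{1,\dots,k-1\},\ x_k>r_k,
\]
of length $x_k-r_k$; the case $F=\varnothing$ gives the distinguished gap $(r_k,x_k)$.

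I would also note that $R$ has no isolated points: for $t=s_A\in R$ and any $m$ with $x_m\neq0$, toggling whether $m\in A$ produces a point of $R$ at distance $x_m$ from $t$, and as $x_m\to0$ these give points of $R\setminus\{t\}$ converging to $t$. Now let $(a,b)$ be the leftmost longest gap, of length $L=b-a$; by the previous paragraph $a=s_F+r_k$, $b=s_F+x_k$, so $L=x_k-r_k$. If $F\neq\varnothing$ then $s_F>0$ and $(r_k,x_k)$ is a \emph{different} gap of the same length $L$; it is the leftward translate of $(a,b)$ by $s_F$ and, being a distinct gap, is disjoint from $(a,b)$, which forces $x_k\le a$. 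Equality $x_k=a$ would squeeze $a$ between the gaps $(r_k,x_k)$ and $(a,b)$, making it isolated in $R$, which is impossible; hence $x_k<a$. Thus $(r_k,x_k)$ is a gap with right endpoint $x_k<a$ and length $L$, contradicting the hypothesis that every gap lying strictly to the left of $(a,b)$ is strictly shorter. Therefore $F=\varnothing$ and $a=r_k$, $b=x_k$, as required.

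The crux is the gap description in the second paragraph. In the non-quickly-convergent regime, where $x_k\le r_k$ for infinitely many $k$, the naive recursion $R=E(x_2,x_3,\dots)\cup\bigl(x_1+E(x_2,x_3,\dots)\bigr)$ lets neighbouring gaps merge across the overlap of the two summands, so the gaps cannot be read off a single splitting. The monotone intersection $R=\bigcap_N(E_N+[0,r_N])$ is exactly what controls this: it presents $R$ as a decreasing limit in which a gap, once opened with both endpoints already in $R$, can never be closed or enlarged again. Everything after the gap description is elementary bookkeeping together with the perfectness of $R$.
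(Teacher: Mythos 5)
Your proof breaks down at its central claim, and the break is fatal. The assertion in your second paragraph that the gaps of $R$ are \emph{exactly} the intervals $(s_F+r_k,\,s_F+x_k)$ with $F\subseteq\{1,\dots,k-1\}$ and $x_k>r_k$ is false. The reasoning ``each block $[s_F,s_F+r_N]$ loses its open middle \dots\ so the carved interval survives as a genuine maximal gap'' ignores that the level-$(N+1)$ set is the union over \emph{all} blocks: the middle carved out of the block based at $s_F$ may be covered by blocks based at other points $s_{F'}\in E_N$, which overlap it, and the fact that the two endpoints of the carved interval lie in $R$ does not prevent its interior points from lying in $R$ via representations unrelated to $F$. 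Concretely, take $x_1=\frac{3}{2}$, $x_2=1$, $x_3=\frac{9}{10}$ and $x_n=\frac{3}{10}\cdot 2^{3-n}$ for $n\geq 4$ (nonincreasing, positive, $r_3=\frac{3}{10}<x_3$, and the tail from $x_4$ on fills $[0,\frac{3}{10}]$). For $F=\{2\}$, $k=3$ your characterization declares $(1.3,\,1.9)$ to be a gap, yet $x_1=1.5$ lies in it. Here $R=[0,0.3]\cup[0.9,1.3]\cup[1.5,1.8]\cup[1.9,2.2]\cup[2.4,2.8]\cup[3.4,3.7]$, so the converse direction fails as well: $(1.3,1.5)$, $(1.8,1.9)$, $(2.2,2.4)$ are gaps of $R$ whose lengths ($0.2$, $0.1$, $0.2$) are not equal to $x_k-r_k$ for any $k$, so they are not translates of fundamental gaps. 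Your characterization is valid precisely in the quickly convergent regime $x_k>r_k$ for all $k$, where the blocks of each level are pairwise disjoint; in the mixed regime --- the only one where the lemma is not immediate, and the one relevant to Cantorvals --- it is exactly the overlapping of blocks that creates gaps of other shapes, which is the difficulty your final paragraph claims the intersection formula controls, but does not.

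Since your last paragraph consumes ``$a=s_F+r_k$, $b=s_F+x_k$'' as input, the proof collapses; note that the conclusion of the lemma does hold in the example above (the leftmost longest gap is $(0.3,0.9)=(r_3,x_3)$), so what is missing is an argument, not a repair of the statement. For the record, the paper does not prove this lemma but quotes it from \cite{BFPW}, so there is no in-paper proof to compare with. The pieces of your write-up that do survive are: the identity $R=\bigcap_N(E_N+[0,r_N])$ (your K\"onig's lemma argument is correct), the perfectness of $R$, the fact that $(r_k,x_k)$ is a gap whenever $x_k>r_k$ (which should be proved directly from monotonicity --- any subsum smaller than $x_k$ uses only indices $>k$ and hence is at most $r_k$ --- rather than read off the false characterization), and the endgame ``a second gap of length at least $b-a$ with right endpoint at most $b$, together with perfectness, forces it to coincide with $(a,b)$.'' What has no substitute in your text is the step that feeds that endgame: for the given leftmost longest gap $(a,b)$ one must \emph{produce} an index $k$ with $x_k\leq b$ and $x_k-r_k\geq b-a$ (for instance by starting from a representation $b=\sum_{n\in B}x_n$, noting that $b-x_j\in R$ forces $x_j\geq b-a$ for every $j\in B$, and then working to show that a suitable choice of $B$ and $k=\max B$ gives $b-x_k+r_k\leq a$). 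That is where the real content of the Third Gap Lemma lies, and it is exactly the part your proposal replaces with a false general description of the gaps.
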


There are several consequences of Lemma \ref{thirdgaplemma}. 
\begin{corollary}\label{interval}
If $U(x_n)=\{0,\sum_{n=1}^{\infty}x_n\}$, then $E(x_n)$ is an interval.
\end{corollary}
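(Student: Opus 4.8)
The plan is to prove the contrapositive: assuming $E(x_n)$ is not an interval, I will exhibit a unique point strictly between $0$ and $S:=\sum_{n=1}^{\infty}x_n$, so that $U(x_n)\supsetneq\{0,S\}$. Throughout I use the standing reductions already in force: $(x_n)$ is nonincreasing with all terms positive (having discarded any zero terms, as permitted) and with infinitely many nonzero terms. The last hypothesis is genuinely needed, since for finitely supported sequences the conclusion fails; e.g.\ $x_1=x_2=1$ gives $E=\{0,1,2\}$ with $U=\{0,2\}=\{0,S\}$, yet $E$ is not an interval.

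Since $E(x_n)$ is compact with $\min E=0$ and $\max E=S$, it is an interval exactly when $E(x_n)=[0,S]$. If it is not, then $[0,S]\setminus E(x_n)$ is a nonempty open set, hence a countable union of pairwise disjoint open gaps whose lengths sum to at most $S$; consequently only finitely many gaps exceed any fixed length, so a gap of maximal length exists. I take $(a,b)$ to be the leftmost gap of maximal length. Every gap strictly to the left of $(a,b)$ is then strictly shorter, so $(a,b)$ satisfies the hypothesis of the Third Gap Lemma (Lemma \ref{thirdgaplemma}), which produces an index $k\in\mathbb{N}$ with $b=x_k$ and $a=r_k$. As $(a,b)$ is a genuine gap, $a<b$, i.e.\ $x_k>r_k$.

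Next I claim that $a=r_k$ is a $1$-point. Suppose $\sum_{n\in A}x_n=r_k$ for some $A\subseteq\mathbb{N}$. If $A$ contained an index $j\le k$, then monotonicity would give $\sum_{n\in A}x_n\ge x_j\ge x_k>r_k$, a contradiction; hence $A\subseteq\{k+1,k+2,\dots\}$. But then $\sum_{n>k,\,n\notin A}x_n=r_k-\sum_{n\in A}x_n=0$, and positivity of the terms forces $A=\{k+1,k+2,\dots\}$. Thus $r_k$ is represented only by the full tail, so $r_k\in U(x_n)$. Finally, positivity together with the presence of infinitely many nonzero terms gives $0<r_k$, while $k\ge 1$ gives $r_k=S-\sum_{n=1}^{k}x_n<S$. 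Hence $r_k\in U(x_n)\setminus\{0,S\}$, contradicting $U(x_n)=\{0,S\}$.

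I expect the only substantive step to be the uniqueness verification for $r_k$, which rests on the inequality $x_k>r_k$ delivered by the Third Gap Lemma, combined with monotonicity and positivity; the auxiliary facts (existence of a leftmost longest gap and the reduction to positive terms) are routine but worth stating to keep the argument airtight. I also note that the Third Gap Lemma, while convenient, is not strictly indispensable here: any index $k$ with $x_k>r_k$ drives the same uniqueness argument, and the characterization recorded earlier, namely that $x_k\le r_k$ for all $k$ is equivalent to $E(x_n)$ being an interval, guarantees such a $k$ as soon as $E(x_n)$ fails to be an interval.
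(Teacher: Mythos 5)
Your proof is correct and takes essentially the same route as the paper: the paper's own justification (the remark following Corollary \ref{cantorval}) also applies the Third Gap Lemma to the leftmost longest gap and observes that its left endpoint $a=r_k$, because $x_k>r_k$, admits only the tail representation, hence is a unique point strictly inside $(0,\sum_{n=1}^{\infty}x_n)$. You simply make explicit the steps the paper compresses — existence of a leftmost longest gap, the verification that $r_k$ has only the tail representation, and the inequalities $0<r_k<\sum_{n=1}^{\infty}x_n$.
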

\begin{corollary}\label{cantorval}
If  $E(x_n)$ is a Cantorval, then $U(x_n)$ is infinite.
\end{corollary}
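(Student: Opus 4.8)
The plan is to exhibit infinitely many unique points explicitly, namely the tail sums $r_k=\sum_{n>k}x_n$ for suitably chosen $k$. Recall that after the normalizations already made we may assume $(x_n)$ is nonincreasing, and since there are infinitely many nonzero terms and the sequence is nonincreasing, every term is in fact strictly positive. In particular $r_k$ is strictly decreasing in $k$, because $r_k-r_{k+1}=x_{k+1}>0$, so distinct indices produce distinct values.

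The key step is an elementary uniqueness claim: \emph{if $x_k>r_k$, then $r_k\in U(x_n)$.} To prove it, I would take any $A\subseteq\N$ with $r_k=\sum_{n\in A}x_n$ and argue that $A$ cannot contain an index $j\le k$. Indeed, if $j\in A$ with $j\le k$, then since the terms are positive and nonincreasing we get $\sum_{n\in A}x_n\ge x_j\ge x_k>r_k$, a contradiction. Hence $A\subseteq\{k+1,k+2,\dots\}$, and because every term is positive the only such set with sum $r_k=\sum_{n>k}x_n$ is $A=\{k+1,k+2,\dots\}$ itself. Thus $r_k$ has exactly one representation.

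It then remains to produce infinitely many indices $k$ with $x_k>r_k$. Here I would invoke Kakeya's Theorem \ref{Kakeya}(1): $E(x_n)$ is a finite union of closed intervals if and only if $x_k\le r_k$ for all but finitely many $k$. A Cantorval is not a finite union of closed intervals, since by the trichotomy of Theorem \ref{GNCantorval} these are mutually exclusive possibilities (a Cantorval has infinitely many gaps). Therefore $x_k>r_k$ must hold for infinitely many $k$, and by the previous step each such $r_k$ lies in $U(x_n)$. As these values are pairwise distinct, $U(x_n)$ is infinite.

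I expect the only point needing care to be the interface with the classification, that is, being certain ``Cantorval'' excludes ``finite union of closed intervals''; this is precisely what Theorem \ref{GNCantorval} guarantees. An alternative route would iterate Lemma \ref{thirdgaplemma} together with Corollary \ref{interval} on successive tails $E((x_n)_{n>k})$, but locating the unique points directly as tail sums is cleaner and avoids tracking the self-similar structure.
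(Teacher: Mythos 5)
Your proof is correct. The uniqueness mechanism you rely on --- that $r_k$ admits only the tail representation whenever $x_k>r_k$, since no representation of $r_k$ can use an index $j\le k$ and positivity of the terms then forces the whole tail --- is exactly the observation the paper makes in the remark following Corollary~\ref{cantorval}, where it is phrased through the Third Gap Lemma: the longest-from-the-left gap $(a,b)$ satisfies $b=x_k$, $a=r_k$, and $a$ has only its tail representation. Where you genuinely diverge is in how the infinitely many such indices are produced. The paper presents the corollary as a consequence of Lemma~\ref{thirdgaplemma}, so its (unwritten) count of unique points must come from exhibiting infinitely many gaps that are longest from the left --- e.g.\ recursively taking the leftmost longest gap among those lying to the left of the previous one, which needs the fact that the gaps of a Cantorval accumulate at $0$; this selection argument is left implicit. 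You instead obtain the indices in one stroke from the ``if'' direction of Kakeya's Theorem~\ref{Kakeya}(1): a Cantorval is not a finite union of closed intervals (they have different numbers of connected components, so the trichotomy of Theorem~\ref{GNCantorval} is exclusive), hence $x_k\le r_k$ cannot hold for all but finitely many $k$, i.e.\ $x_k>r_k$ infinitely often; strict monotonicity of $(r_k)$ then yields infinitely many distinct unique points. Your route is more self-contained --- it replaces the unstated gap-selection step by a single cited equivalence and an elementary verification --- while the paper's route buys the geometric identification of the unique points as left endpoints of distinguished gaps, a picture it reuses later (for instance in the proof of Theorem~\ref{wiekszycantorval}).
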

Note that the role of non-unique element in $(0,\sum_{n=1}^{\infty}x_n)$ is played by $a$ from Lemma \ref{thirdgaplemma}. Indeed since $b=x_k>r_k=a$ it is clear there is only one tail-representation of $a$. On the other hand it may happen that $b=x_k=x_{k+1}$, so $b$ can not be considered as unique element.

\begin{proposition}\label{onlycantor}
Let $F: \{0,1\}^{\mathbb{N}}\rightarrow E(x_n)$ be defined as $F((\varepsilon_n))=\sum_{n=1}^{\infty}\varepsilon_nx_n$. 
\begin{enumerate}
\item If $\sum_{n=1}^{\infty}x_n$ is absolutely convergent, then $F$ is continuous.
\item Moreover if $F$ is $1-1$, then $F$ is homeomorphism. 
\end{enumerate}
\begin{proof}
$(1)$. It is a folklore.
\\$(2)$.  It is well known that continuous injection on a compact set is homeomorphism.
\end{proof}
\end{proposition}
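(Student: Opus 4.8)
The plan is to prove the two parts separately, using the standard metric structure on the Cantor space $\{0,1\}^{\mathbb{N}}$ together with its compactness.

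For part (1), I would equip $\{0,1\}^{\mathbb{N}}$ with the usual metric in which two sequences are close precisely when they agree on a long initial segment, say $d((\varepsilon_n),(\delta_n))=2^{-k}$ where $k$ is the least index at which the two sequences differ. The key estimate is that if $(\varepsilon_n)$ and $(\delta_n)$ coincide on the first $N$ coordinates, then
$$\left| F((\varepsilon_n))-F((\delta_n))\right| = \left| \sum_{n>N}(\varepsilon_n-\delta_n)x_n\right| \le \sum_{n>N}|x_n|.$$
Since $\sum_{n=1}^{\infty}x_n$ is absolutely convergent, the tail $\sum_{n>N}|x_n|$ tends to $0$ as $N\to\infty$. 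Hence, given $\eta>0$, I can choose $N$ making this tail smaller than $\eta$, and then any two sequences agreeing on the first $N$ coordinates are mapped within $\eta$ of each other. This in fact yields uniform continuity of $F$, and in particular continuity, establishing (1).

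For part (2), I would invoke the topological fact that a continuous bijection from a compact space onto a Hausdorff space is a homeomorphism. The space $\{0,1\}^{\mathbb{N}}$ is compact by Tychonoff's theorem, being a countable product of the finite discrete space $\{0,1\}$, and $E(x_n)\subseteq\mathbb{R}$ is Hausdorff. By definition $F$ is surjective onto $E(x_n)$, by hypothesis it is injective, and by part (1) it is continuous; thus $F$ is a continuous bijection from a compact space onto a Hausdorff space. It therefore sends closed (hence compact) sets to compact, thus closed, sets, so $F^{-1}$ is continuous and $F$ is a homeomorphism.

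Neither step presents a genuine obstacle: part (1) is a routine tail estimate that uses only absolute convergence, and part (2) is a direct appeal to compactness. The only point requiring a little care is verifying that the chosen metric on $\{0,1\}^{\mathbb{N}}$ genuinely generates the product topology, so that the tail estimate translates into continuity in the intended topology; but this is entirely standard.
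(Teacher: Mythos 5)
Your proposal is correct and follows exactly the route the paper takes: the paper dismisses part (1) as folklore (your tail estimate is precisely that folklore argument) and part (2) by citing the standard fact that a continuous injection on a compact space onto a Hausdorff space is a homeomorphism, which is the compactness argument you spell out. You have simply written out the details the paper leaves implicit; there is no difference in substance.
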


By Proposition \ref{onlycantor} (2) we know that the property of unique representations of all points, that is $E(x_n)=U(x_n)$ is reserved for Cantor sets. 

In \cite{cardfun} the authors showed that if $E(x_n)$ is a finite union of closed intervals, then $U(x_n)$ has an empty interior. Corollary \ref{cantorval} may suggest that if $E(x_n)$ is a Cantorval, then $U(x_n)$ is large. In this paper we will show that it is not true and $U(x_n)$ has an empty interior in case $E(x_n)$ is a Cantorval as well.

\section{$U(x_n)$ has empty interior}

We will use the following simple observation.
\begin{proposition}\label{skonczonezmiany}
Let $k\in\mathbb{N}$. Then $E(x_n)$ is a finite union of closed intervals (a Cantor set, a Cantorval) if and only if $E((x_n)_{n>k})$ is a finite union of closed intervals (a Cantor set, a Cantorval, respectively).
\end{proposition}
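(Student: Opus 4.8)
The plan is to exploit the decomposition of the achievement set into finitely many translates of its tail set. Writing $A=E((x_n)_{n>k})$ and $\Sigma_k=\{\sum_{n=1}^k\varepsilon_nx_n:(\varepsilon_n)\in\{0,1\}^k\}$, one has $E(x_n)=\bigcup_{s\in\Sigma_k}(s+A)$, a union of finitely many translated copies of $A$, and since $0\in\Sigma_k$ we get $A\subseteq E(x_n)$. Because $(x_n)_{n>k}$ has infinitely many nonzero terms exactly when $(x_n)$ does, Theorem~\ref{GNCantorval} applies to both $A$ and $E(x_n)$, so each of them is exactly one of the three mutually exclusive types. I would therefore establish the equivalence type by type; the converse directions will then follow either symmetrically or from mutual exclusivity of the three types.

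For the interval case I would use Kakeya's criterion, Theorem~\ref{Kakeya}(1): $E(x_n)$ is a finite union of closed intervals iff $x_m\le r_m$ for all but finitely many $m$, where $r_m=\sum_{n>m}x_n$. The point is that this is an \emph{eventual} condition and that the tails $r_m$ of $(x_n)$ and of $(x_n)_{n>k}$ coincide for large $m$ (reindexing $(x_n)_{n>k}$ shifts the index but leaves the tail sums unchanged). Hence the criterion holds for one series iff it holds for the other, settling the interval case in both directions at once and independently of the Minkowski decomposition.

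For the remaining two types I would reduce everything to the interior. Both $A$ and $E(x_n)$ are nonempty, compact and perfect (any point has nearby distinct points by flipping a far-out coordinate), and a nonempty compact perfect subset of $\R$ is homeomorphic to the Cantor set precisely when it has empty interior, whereas both a Cantorval and a finite union of nondegenerate intervals have nonempty interior. So after the interval case it only remains to show that $E(x_n)$ has empty interior iff $A$ does. One direction is immediate from $A\subseteq E(x_n)$; for the other, if $A$ has empty interior then each translate $s+A$ is closed and nowhere dense, and a finite union of closed nowhere dense sets is nowhere dense, so $E(x_n)=\bigcup_{s\in\Sigma_k}(s+A)$ has empty interior. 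Combining this interior equivalence with the interval case and the trichotomy of Theorem~\ref{GNCantorval} then yields the proposition: empty interior pins down the Cantor set, while nonempty interior together with the failure of Kakeya's condition pins down the Cantorval.

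The step I expect to require the most care is precisely this reduction of the Cantor-versus-Cantorval distinction to emptiness of the interior: justifying that both achievement sets are perfect and invoking the topological characterization of the Cantor set, together with the eventual-tail insensitivity in Kakeya's condition. Once these observations are in place, the finite-union-of-translates bookkeeping is routine, and no delicate direct analysis of the gap structure of a Cantorval is needed.
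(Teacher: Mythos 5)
Your proof is correct, but there is nothing in the paper to compare it against: the authors state Proposition \ref{skonczonezmiany} as a ``simple observation'' and give no proof whatsoever, so your argument supplies a justification that the paper omits. The ingredients you use are all sound: the decomposition $E(x_n)=\bigcup_{s\in\Sigma_k}\bigl(s+E((x_n)_{n>k})\bigr)$ into finitely many translates of the tail set; the observation that Kakeya's criterion in Theorem \ref{Kakeya}(1) is an eventual condition whose data (terms and remainders) for the tail series are literally those of the original series after an index shift, which settles the interval case in both directions; and the transfer of empty interior both ways, downward by inclusion and upward because a finite union of closed nowhere dense sets is nowhere dense. Combined with compactness and perfectness of achievement sets of series with infinitely many nonzero terms, Brouwer's characterization of the Cantor set, and the trichotomy of Theorem \ref{GNCantorval} applied to both series, this pins down all three cases. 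The one spot where a careful reader might want an extra line is your claim that a Cantorval, being defined only up to homeomorphism, has nonempty interior: this holds because homeomorphisms between subsets of $\R$ preserve connected components, so any copy of the Guthrie--Nymann set still contains nondegenerate intervals; similarly, having infinitely many components is preserved, which gives the mutual exclusivity of the three types that your case analysis relies on. If anything, your write-up is more detailed than what the authors had in mind, as their follow-up remark (removing or adding finitely many terms does not change the type of the achievement set) indicates they regard exactly this bookkeeping as routine.
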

The above proposition can be read as follows: removing or adding finitely many terms to our series does not change the type of its achievement set. In \cite{cardfun} the authors asked in Problem 3.14 if it is possible to construct a sequence $(x_n)$ for which $U(x_n)$ contains an interval. They also gave a negative answer for the case when $E(x_n)$ is a finite union of closed intervals. Theorem \ref{Cantorvalnointerval} completes the answer.

\begin{theorem}\label{Cantorvalnointerval}
Let $E=E(x_n)$ contains an interval ($E$ is a Cantorval either finite sum of closed intervals). Then $U=U(x_n)$ has an empty interior. 
\begin{proof}
Suppose that every point of $[a,b]\subset E$ is unique. Firstly we will show that it implies that there exists $\varepsilon>0$ such that every point in the set $E\cap [0,\varepsilon]$ is unique.
\\Suppose that for each $\varepsilon>0$ the set $E\cap [0,\varepsilon]$ contains a non-unique point. Since a set of finite sums $\{\sum_{n=1}^{k} \varepsilon_n x_n : (\varepsilon_n)\in\{0,1\}^k, k\in\mathbb{N}\}$ is dense in $E$ one can find a finite set $A\subset\{1,\ldots,m\}$ such that $\sum_{n\in A} x_n\in (a,b)$. Denote $\delta=b-x$. Let $\alpha$ be any positive number smaller than both $x_m$ and $\delta$, that is $\alpha<\min\{x_m,\delta\}$. One can find a non-unique $y\in E\cap[0,\alpha]$. Hence $y=\sum_{n\in B} x_n=\sum_{n\in C} x_n$ for $B\neq C$. Since $y<x_m$, we get $A\cap B=\emptyset=A\cap C$. Thus $x+y=\sum_{n\in A\cup B} x_n=\sum_{n\in A\cup C} x_n$, so $x+y$ is non-unique. But $a<x<x+y<x+\alpha<x+\delta=x+b-x=b$, which means that $x+y\in [a,b]$ and contradicts with uniqueness of points in $[a,b]$. Hence we are done with the first part of the proof. 
\\We get that $U\supset E\cap [0,\varepsilon]$ for some $\varepsilon>0$. Note that $\sum_{n=k+1}^{\infty}x_n<\varepsilon$ for large enough $k$.
Hence by removing first $k$ terms, we get that $E((x_n)_{n>k})\subset [0,\varepsilon]$. But then by Proposition \ref{skonczonezmiany} we know that $E((x_n)_{n>k})$ is a Cantorval either a finite sum of closed intervals, which contains only unique points. By Proposition \ref{onlycantor} it is not possible.
\end{proof}
\end{theorem}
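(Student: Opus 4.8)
The plan is to argue by contradiction: assume that some interval $[a,b]\subseteq E$ consists entirely of unique points, and derive a contradiction. The heart of the argument is a translation trick that lets me convert any occurrence of non-uniqueness near $0$ into an occurrence of non-uniqueness inside $[a,b]$; this will force uniqueness to hold on an entire initial segment $E\cap[0,\varepsilon]$, which is too strong to survive the tail structure of a Cantorval or a finite union of intervals.

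First I would reduce to the initial-segment statement: \emph{there is $\varepsilon>0$ such that every point of $E\cap[0,\varepsilon]$ is unique.} Suppose not, so every $[0,\varepsilon]$ still meets a non-unique point. Since $E$ contains an interval, the finite partial sums $\{\sum_{n=1}^{k}\varepsilon_n x_n\}$ are dense in $E$, so I can pick a finite $A\subseteq\{1,\dots,m\}$ with $x:=\sum_{n\in A}x_n$ strictly inside $(a,b)$; put $\delta=b-x>0$. Choosing a non-unique $y<\min\{x_m,\delta\}$ with two distinct representing sets $B\neq C$, the standing monotonicity $x_1\ge x_2\ge\cdots$ forces every index used for $y$ to exceed $m$, since a single term with index $\le m$ would already be at least $x_m>y$. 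Hence $B,C\subseteq\{m+1,m+2,\dots\}$ and both are disjoint from $A$, so $A\cup B$ and $A\cup C$ are two different sets with the common sum $x+y$. Finally $a<x<x+y<x+\delta=b$ places this non-unique point inside $[a,b]$, contradicting the assumed uniqueness there.

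With uniqueness established on $E\cap[0,\varepsilon]$, I would finish by a tail-truncation argument. For $k$ large enough the tail $\sum_{n>k}x_n$ is below $\varepsilon$, so $E((x_n)_{n>k})\subseteq[0,\varepsilon]$ and therefore consists only of unique points; equivalently the map $F$ of Proposition \ref{onlycantor} for the truncated series is injective. By Proposition \ref{skonczonezmiany} this truncated achievement set is again a Cantorval or a finite union of closed intervals, but Proposition \ref{onlycantor}(2) would then make it homeomorphic to the Cantor space $\{0,1\}^{\N}$, which is impossible for a set containing an interval. This contradiction shows that $U$ can contain no interval.

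The step I expect to be the main obstacle is the reduction to the initial segment, because the translation trick works only once I know that the terms representing a sufficiently small $y$ are automatically disjoint from the finitely many terms assembling $x$; verifying this disjointness rests squarely on the monotonicity of $(x_n)$ together with the careful choice $y<x_m$. Everything after that reduction is bookkeeping combined with the two cited Propositions, so the genuine content of the argument lives entirely in that first half.
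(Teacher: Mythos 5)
Your proposal is correct and follows essentially the same route as the paper's own proof: the same translation trick (a finite partial sum $x=\sum_{n\in A}x_n$ inside $(a,b)$ plus a non-unique $y<\min\{x_m,\delta\}$, with monotonicity forcing $B,C$ disjoint from $A$) to establish uniqueness on an initial segment $E\cap[0,\varepsilon]$, followed by the same tail-truncation step using Propositions \ref{skonczonezmiany} and \ref{onlycantor}. The only difference is that you spell out slightly more explicitly why the truncated achievement set cannot be homeomorphic to $\{0,1\}^{\N}$, which the paper leaves implicit.
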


\section{Guthrie--Nymann's Cantorval revisited}
In the most of the papers, dedicated to counting numbers of representations, the authors considered $E(x_n)$ being an interval, see \cite{Nymann} or \cite{Erdos}. 
The first paper, where the number of digital representations of each point was calculated for Cantorval is \cite{BPW}. The authors considered the Guthrie--Nymann's Cantorval $E(x_n)=E(3,2;\frac{1}{4})$ and showed that each point may have $1$ either $2$ representations. Clearly $E(3,2;\frac{1}{4})=\{\sum_{n=1}^{\infty}\frac{b_n}{4^n} : (b_n)\in\Sigma^{\infty}\}$ for $\Sigma=\{0,2,3,5\}$. Note that the elements of $\Sigma$ are bijectively generated as subsums of $3$ and $2$. Hence the equality
$\sum_{n=1}^{\infty}\frac{b_n}{4^n}=\sum_{n=1}^{\infty}\varepsilon_n x_n$ reads as follows: $b_n=5$ iff $\varepsilon_{2n-1}=\varepsilon_{2n}=1$; $b_n=3$ iff $\varepsilon_{2n-1}=1$ and $\varepsilon_{2n}=0$; $b_n=2$ iff $\varepsilon_{2n-1}=0$ and $\varepsilon_{2n}=1$; $b_n=0$ iff $\varepsilon_{2n-1}=\varepsilon_{2n}=0$.
 It gives a  straightforward conversion between the two ways for writing a subsum, one uses  the sequences $(b_n)$ and the second, which uses the terms $(x_n)$.

The authors of \cite{BPW} proved that a point has two digital representations $\sum_{n=1}^{\infty}\frac{a_n}{4^n}=\sum_{n=1}^{\infty}\frac{b_n}{4^n}$ iff there exists the finite or infinite sequence $n_0<n_1<\ldots$ such that 
\begin{enumerate}
\item $a_k=b_k$ for $k<n_0$;
\item $a_{n_0}=2$ and $b_{n_0}=3$;
\item $a_{n_k}=5$ and $b_{n_k}=0$ for odd $k$;
\item $a_{n_k}=0$ and $b_{n_k}=5$ for even $k>0$;
\item  $a_{i}\in\{3,5\}$ and $a_i-b_{i}=3$, as far as $n_{2k}<i<n_{2k+1}$;
\item  $a_{i}\in\{0,2\}$ and $b_i-a_{i}=3$, as far as $n_{2k+1}<i<n_{2k+2}$.
\end{enumerate} 
Otherwise a point has a unique digital representation. The above conditions should be interpretated as follows:
By $(1)$ we see it does not matter what appears in the begining of digital representation. Since the set of all sums of finite subseries is dense in $E$, we obtain that the set $E_2$ of all $2$ - points is dense in Cantorval. Moreover the authors mentioned in \cite{BPW} that in particular if $a_{n}=2$ and $a_{n+1}=3$ for infinite many $n$'s, then the representation $\sum_{n=1}^{\infty}\frac{a_n}{4^n}$ is unique. Again, since that condtion does not depend on the first finitely many terms, we get that  the set $U=E_1$ of all $1$ - points is dense in Cantorval.
Condition $(2)$ is a consequence of the fact that $\sum_{n=k}^{\infty}\frac{5}{4^n}=\frac{5}{3}\cdot\frac{1}{4^{k-1}}<2\cdot\frac{1}{4^{k-1}}$. It precisely means that if there are two digital representations, their partial sums should be close to each other, namely $\vert\sum_{n=1}^{k}\frac{a_n}{4^n}-\sum_{n=1}^{k}\frac{b_n}{4^n}\vert\leq\frac{1}{4^k}$ for all $k\in\mathbb{N}$, because both $\sum_{n=1}^{k}\frac{a_n}{4^n}$ and $\sum_{n=1}^{k}\frac{b_n}{4^n}$ can be written as fraction with denominator $4^k$.
If this distance is positive for the first time for $k=n_0$, since $\frac{1}{4^k}=\sum_{n=k+1}^{\infty}\frac{3}{4^n}$ and $\frac{5}{4^{k+1}}-\frac{1}{4^k}=\frac{1}{4^{k+1}}$, it has to remain positive for any $k>n_0$ and the last inequality changes into equality. Hence we get conditions $(3), (4), (5)$ and $(6)$ as the only possible ways to keep the distance between partial sums. 

Now we consider Cantorvals for which $E=E_1\cup E_2$ and both sets $E_1$ and $E_2$ are dense. Our method of calculation and argumentation is analogous to that used by authors in \cite{BPW}. All of the considered Cantorvals belongs to a class of Guthrie-Nymann-Jones' Cantorvals, i. e. $E(3,\underbrace{2,2,\ldots,2}_{m - \ \text{times}};q)$.  We ponder the ratio $q=\frac{1}{2m+2}$. Note that in \cite{BFS} the authors showed that for the taken parameters the set $E$ is a Cantorval. Moreover we should guarantee uniqueness in $\Sigma$, so it has to have $2^p$ elements, when $\Sigma$ is generated by $p$ elements. This problem of the uniqueness representation in $\Sigma$ in GNJ-Cantorvals we solve by taking particular numbers of $2$, that is $m=2^r-1$ and combining consecutive $2^w$ elements $2$ into one $2^{w+1}$. That is instead of $E=E(3,2,2,2;\frac{1}{8})$ we consider the same set but obtained for another sequence $E=E(4,3,2;\frac{1}{8})$, instead of  $E(3,2,2,2,2,2,2,2;\frac{1}{16})$ its counterpart $E=E(8,4,3,2;\frac{1}{16})$ and so on. 

\begin{theorem}
Assume that $x\in E(4,3,2;\frac{1}{8})$ has more than one digital representation. There exists the finite or infinite sequence of positive natural numbers $n_0<n_1<\ldots$ and exactly two digital representations $x=\sum_{n=1}^{\infty}\frac{a_n}{8^n}=\sum_{n=1}^{\infty}\frac{b_n}{8^n}$ of $x$ such that 
\begin{enumerate}
\item $a_k=b_k$ for $k<n_0$;
\item $a_{n_0}=2+j$ and $b_{n_0}=3+j$ for some $j\in\{0,1,2,3,4\}$;
\item $a_{n_k}=9$ and $b_{n_k}=0$ for odd $k$;
\item $a_{n_k}=0$ and $b_{n_k}=9$ for even $k>0$;
\item  $a_{i}\in\{7,9\}$ and $a_i-b_{i}=7$, as far as $n_{2k}<i<n_{2k+1}$;
\item  $a_{i}\in\{0,2\}$ and $b_i-a_{i}=7$, as far as $n_{2k+1}<i<n_{2k+2}$.
\end{enumerate}
\begin{proof}
Assume that $x=\sum_{n=1}^{\infty}\frac{a_n}{8^n}=\sum_{n=1}^{\infty}\frac{b_n}{8^n}$ for two different digital representations $(a_n)$ and $(b_n)$ in $\{0,2,3,4,5,6,7,9\}^\N$. Let $n_0$ be the minimal index, where $(a_n)$ and $(b_n)$ differs. That is condition $(1)$ holds and $a_{n_0}\neq b_{n_0}$. Since $\sum_{n=n_0+1}^{\infty}\frac{9}{8^n}=\frac{9}{7}\cdot\frac{1}{8^{n_0}}$, we get $0<\vert a_{n_0}- b_{n_0}\vert \leq 1$. We may assume that $b_{n_0}$ is larger than $a_{n_0}$ and thus obtain condition $(2)$. Hence  $\sum_{n=1}^{n_0}\frac{b_n}{8^n}-\sum_{n=1}^{n_0}\frac{a_n}{8^n}=\frac{1}{8^{n_0}}$. Since we do not have $8\in\Sigma$ and need to hold the inequality $\vert\sum_{n=1}^{n_0+1}\frac{b_n}{8^n}-\sum_{n=1}^{n_0+1}\frac{a_n}{8^n}\vert\leq\frac{1}{8^{n_0+1}}$, we have two possibilities. First of them appears if the equality $\sum_{n=1}^{n_0+1}\frac{b_n}{8^n}-\sum_{n=1}^{n_0+1}\frac{a_n}{8^n}=\frac{1}{8^{n_0+1}}$ holds. Then it means that $a_{n_0+1}-b_{n_0+1}=7$, which is described by $(5)$ condition. Otherwise if we have $\sum_{n=1}^{n_0+1}\frac{b_n}{8^n}-\sum_{n=1}^{n_0+1}\frac{a_n}{8^n}=-\frac{1}{8^{n_0+1}}$, then this case is described by the $(3)$ condition. We continue by induction.   
\end{proof}
\end{theorem}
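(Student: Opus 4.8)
The plan is to follow the argument of \cite{BPW} for $E(3,2;\frac14)$, replacing base $4$ by base $8$ and the alphabet $\{0,2,3,5\}$ by $\Sigma=E(4,3,2)=\{0,2,3,4,5,6,7,9\}$. Since $|\Sigma|=8=2^3$, the digits of $\Sigma$ correspond bijectively to subsets of $\{4,3,2\}$, so the notion of digital representation is unambiguous. Two arithmetic features of $\Sigma$ will drive everything, and I would isolate them first: (a) $\max\Sigma=9$ with $\tfrac{9}{8-1}=\tfrac97<2$, which controls tails; and (b) no two elements of $\Sigma$ differ by exactly $8$ (indeed $8\notin\Sigma$, and the only gaps of $\Sigma$ are the two end gaps $0\,|\,2$ and $7\,|\,9$). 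I would also tabulate the admissible digit-pairs by difference: pairs differing by $1$ are exactly $(2+j,3+j)$ with $j\in\{0,1,2,3,4\}$; pairs differing by $7$ are $(7,0),(9,2)$ and their reverses; and the pair differing by $9$ is $(9,0)$ and its reverse.

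For two distinct representations put $D_k=\sum_{n\le k}\frac{b_n-a_n}{8^n}$, normalised (as in the excerpt) so that $b$ exceeds $a$ at the first index $n_0$ where they differ. The engine of the proof is that $8^kD_k$ is an integer with $|8^kD_k|\le\tfrac97<2$, hence $8^kD_k\in\{-1,0,1\}$. The base case forces $|a_{n_0}-b_{n_0}|=1$, which is condition $(2)$, with $8^{n_0}D_{n_0}=1$. For the inductive step I would write $8^{k+1}D_{k+1}=8\cdot(8^kD_k)+(b_{k+1}-a_{k+1})$ and force it into $\{-1,0,1\}$: from state $+1$ this requires $b_{k+1}-a_{k+1}\in\{-9,-8,-7\}$, and the forbidden value $-8$ is precisely the difference that $\Sigma$ cannot realise by (b), so the state either stays at $+1$ (giving condition $(5)$, via the difference-$7$ pairs with $a\in\{7,9\}$) or flips to $-1$ (giving condition $(3)$, via $(9,0)$); the state $-1$ is symmetric and yields $(6)$ and $(4)$. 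Recording the indices where $\mathrm{sign}(D_k)$ changes as $n_1<n_2<\cdots$ produces the asserted sequence and completes the induction that the excerpt only sketches.

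It then remains to upgrade ``any two representations have this shape'' to ``$x$ has \emph{exactly} two representations,'' which the terse induction does not address. Two consequences of the above suffice. First, since $-8$ is never an available difference, $8^kD_k$ can never return to $0$ once it has left it; hence any two distinct representations have distinct partial sums at every level beyond their first disagreement. Second, for any representation the integer $8^ks_k$ (with $s_k$ the $k$-th partial sum) lies in $[\,8^kx-\tfrac97,\,8^kx\,]$, an interval of length $\tfrac97<2$, so at each level there are at most two possible partial-sum values. Now if $x$ had three distinct representations, then beyond the finite level at which all three have pairwise diverged they would display three pairwise distinct partial sums at a common level, contradicting the bound of two. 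Thus there are at most two representations, and the lockstep construction exhibits two, so exactly two with the stated structure.

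The conceptual core, and the step I expect to be most delicate to present cleanly, is isolating the two properties of $\Sigma$ that make the machine run: the tail bound $\tfrac97<2$ (which simultaneously caps $8^kD_k$ and bounds the number of partial sums per level by two) and the absence of the difference $8$ (which both prevents $D_k$ from collapsing back to $0$ and pins down the admissible transitions). Once these are in hand, verifying the digit-pair tables in each sign-state, which is literally where conditions $(3)$--$(6)$ come from, is a short finite check, and the ``exactly two'' claim is the counting argument above rather than any further analysis.
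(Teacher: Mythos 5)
Your proposal is correct, and its core engine is the same as the paper's: locate the first disagreement $n_0$, use the tail bound $\sum_{n>k}\frac{9}{8^n}=\frac{9}{7}\cdot\frac{1}{8^k}<\frac{2}{8^k}$ to force the scaled partial-sum difference $8^kD_k$ into $\{-1,0,1\}$, and use the fact that no two elements of $\Sigma=\{0,2,3,4,5,6,7,9\}$ differ by $8$ to pin down the admissible transitions, which is exactly how the paper derives conditions (2), (3) and (5) before declaring ``we continue by induction.'' Where you go beyond the paper is the final counting step: the paper's proof never addresses the claim that $x$ has \emph{exactly} two representations, whereas you derive it from two observations --- that $8^kD_k$ can never return to $0$ once nonzero (so distinct representations have distinct partial sums at every level past their first disagreement), and that $8^ks_k$ is an integer confined to an interval of length $\frac{9}{7}<2$ (so at most two partial-sum values occur per level) --- which rules out a third representation. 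This is a genuine completion of a gap the paper leaves implicit, and your explicit state-machine formulation of the induction also makes conditions (4) and (6), which the paper's sketch never reaches, fall out symmetrically.
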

In analogous way we prove the following general result.
\begin{corollary}\label{ogolne}
Assume that $x\in E(2^r,2^{r-1},\ldots,4,3,2;\frac{1}{2^{r+1}})$ has more than one digital representation. There exists the finite or infinite sequence of positive natural numbers $n_0<n_1<\ldots$ and exactly two digital representations $x=\sum_{n=1}^{\infty}\frac{a_n}{8^n}=\sum_{n=1}^{\infty}\frac{b_n}{8^n}$ of $x$ such that 
\begin{itemize}
\item $a_k=b_k$ for $k<n_0$;
\item $a_{n_0}=2+j$ and $b_{n_0}=3+j$ for some $j\in\{0,1,2,\ldots,2^{r+1}-4\}$;
\item $a_{n_k}=2^{r+1}+1$ and $b_{n_k}=0$ for odd $k$;
\item $a_{n_k}=0$ and $b_{n_k}=2^{r+1}+1$ for even $k>0$;
\item  $a_{i}\in\{2^{r+1}-1,2^{r+1}+1\}$ and $a_i-b_{i}=2^{r+1}-1$, as far as $n_{2k}<i<n_{2k+1}$;
\item  $a_{i}\in\{0,2\}$ and $b_i-a_{i}=2^{r+1}-1$, as far as $n_{2k+1}<i<n_{2k+2}$.
\end{itemize}
\end{corollary}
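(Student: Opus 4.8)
The plan is to repeat the argument of the preceding Theorem, replacing the base $8$ by $N:=2^{r+1}$ and the digit set $\{0,2,3,4,5,6,7,9\}$ by $\Sigma=E(2^r,2^{r-1},\dots,4,3,2)$. First I would record the structure of $\Sigma$. The terms $2^1,2^2,\dots,2^r$ have pairwise distinct subset sums, namely the even numbers $0,2,\dots,N-2$, and the remaining term $3$ is either added or not; hence
$$\Sigma=\{0,2,4,\dots,N-2\}\cup\{3,5,7,\dots,N+1\}.$$
This set has $N=2^{r+1}$ elements, matching the $2^{r+1}$ subsets of the $r+1$ generators, so representations inside $\Sigma$ are unique. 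The two features of $\Sigma$ that drive the whole proof are that its largest element equals $N+1$ and that $N\notin\Sigma$ (and $1\notin\Sigma$).

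Next I would carry out the divergence analysis. Given two representations $x=\sum_{n}a_n/N^n=\sum_n b_n/N^n$ with $(a_n),(b_n)\in\Sigma^{\N}$, let $n_0$ be the least index with $a_{n_0}\ne b_{n_0}$, which is the first condition. Since $\sum_{n=n_0+1}^{\infty}\frac{N+1}{N^n}=\frac{N+1}{N-1}\cdot\frac1{N^{n_0}}$ and $\frac{N+1}{N-1}<2$ for $N\ge 8$, the tail can absorb a leading discrepancy of magnitude at most $1$, forcing $|a_{n_0}-b_{n_0}|=1$. Taking $b_{n_0}>a_{n_0}$ and listing the consecutive integer pairs contained in $\Sigma$, namely $(2,3),(3,4),\dots,(N-2,N-1)$, gives the second condition with $j\in\{0,1,\dots,N-4\}$. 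Setting $M_k:=N^k\sum_{n=1}^{k}(b_n-a_n)/N^n\in\Z$, the same tail estimate forces $M_k\in\{-1,0,1\}$ for all $k$, with $M_{n_0}=1$.

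Finally I would run the induction on the recursion $M_{k+1}=N\,M_k+(b_{k+1}-a_{k+1})$. From $M_k=1$ the requirement $M_{k+1}\in\{-1,0,1\}$ permits only $b_{k+1}-a_{k+1}\in\{-(N-1),-N,-(N+1)\}$; the middle value $-N$ is impossible because $a-b=N$ has no solution with $a,b\in\Sigma$ (it would need $b\le 1$, while $b=0$ gives $a=N\notin\Sigma$ and $b=1\notin\Sigma$). Hence either $a_{k+1}-b_{k+1}=N-1$ with $a_{k+1}\in\{N-1,N+1\}$ and $M_k$ stays $1$, or $a_{k+1}=N+1,\ b_{k+1}=0$ and $M_k$ flips to $-1$; the case $M_k=-1$ is symmetric, giving $b_{k+1}-a_{k+1}=N-1$ with $a_{k+1}\in\{0,2\}$, or the flip $a_{k+1}=0,\ b_{k+1}=N+1$. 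This reproduces the remaining four conditions, shows $M_k$ never vanishes but alternates between $\pm1$ exactly at the flip indices $n_1<n_2<\dots$, and pins both sequences down from $x$, so there are exactly two representations. The one genuinely essential point — and the reason for the choice $m=2^r-1$, $q=1/2^{r+1}$ — is the exclusion of the transition $-N$, i.e. the fact $N=2^{r+1}\notin\Sigma$; the rest is the bookkeeping of the Theorem transported verbatim.
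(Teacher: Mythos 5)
Your proposal is correct and takes essentially the same route as the paper: the paper proves this corollary ``in analogous way'' to the preceding theorem for $E(4,3,2;\frac{1}{8})$, and your argument is exactly that divergence analysis transported to base $N=2^{r+1}$ --- the tail estimate forcing $|a_{n_0}-b_{n_0}|=1$, then the induction in which the transition of size $N$ is excluded precisely because $N\notin\Sigma$. Your explicit computation of $\Sigma=\{0,2,\dots,N-2\}\cup\{3,5,\dots,N+1\}$ and the integer bookkeeping $M_k\in\{-1,0,1\}$ are just a clean formalization of the paper's partial-sum inequality $\bigl|\sum_{n=1}^{k}\frac{a_n}{N^n}-\sum_{n=1}^{k}\frac{b_n}{N^n}\bigr|\leq\frac{1}{N^k}$, so the two proofs coincide in substance.
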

\begin{corollary}
By using similar argumentation to that in \cite{BPW} for any Cantorval $E$ constructed in Corollary \ref{ogolne}, it is clear that both $E_1$ and $E_2$ are dense. 
\end{corollary}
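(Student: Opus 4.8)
The plan is to produce, near an arbitrary point of $E$ and at an arbitrary scale, both a $1$-point and a $2$-point, using Corollary \ref{ogolne} as the only structural input. Throughout set $B=2^{r+1}$, so that $E=\{\sum_{n=1}^\infty y_n/B^n:(y_n)\in\Sigma^\infty\}$, where $\Sigma$ is the uniquely-represented set of subsums of $2^r,\dots,4,3,2$. Recall that the set of finite subsums is dense in $E$, so I may fix $x\in E$ and $\varepsilon>0$, choose a finite subsum $s=\sum_{n=1}^N c_n/B^n$ with $c_n\in\Sigma$ and $|s-x|<\varepsilon/2$, and (padding with zero digits if needed) enlarge $N$ so that $\sum_{n>N}(B+1)/B^n<\varepsilon/2$. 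Since the maximal digit is $B+1$, every point of $E$ whose digit sequence begins with $c_1,\dots,c_N$ differs from $s$ only in indices $>N$ and hence lies within $\varepsilon$ of $x$. Thus it suffices to append to the prefix $c_1,\dots,c_N$ a tail that is a $2$-point, respectively a $1$-point.

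For the density of $E_2$ I would append the two-branch tail dictated by Corollary \ref{ogolne} with $n_0=N+1$, $j=0$, and no further indices $n_k$: namely $a_{N+1}=2$, $b_{N+1}=3$ and $a_i=B-1$, $b_i=0$ for all $i>N+1$. This is legal since $B-1=(B+1)-2\in\Sigma$ and $0\in\Sigma$, and it is exactly the (finite) pattern of the Corollary, condition (5) holding vacuously-extended to all $i>n_0$. A one-line geometric computation gives $\sum_{i>N+1}(B-1)/B^i=1/B^{N+1}$, so the first branch contributes $2/B^{N+1}+1/B^{N+1}=3/B^{N+1}$ past position $N$, matching the second branch value $3/B^{N+1}$. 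Hence the two (distinct) digit sequences represent the same point $y=s+3/B^{N+1}$. By construction $y$ has at least two representations, so by Corollary \ref{ogolne} it has exactly two, i.e. $y\in E_2$, and $|y-x|<\varepsilon$.

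For the density of $E_1$ I would append the constant tail $a_i=3$ for all $i>N$ (legal since $3\in\Sigma$) and show that any point with digit $3$ occurring infinitely often is a $1$-point. The key step is the observation that a digit equal to $3$ is incompatible with the two-representation pattern away from the single slot $n_0$. Reading off Corollary \ref{ogolne}, in the active range $i\ge n_0$ the admissible values of $a_i$ are $2+j$ at $n_0$, $0$ or $B+1$ at the indices $n_k$, $\{B-1,B+1\}$ on $n_{2k}<i<n_{2k+1}$, and $\{0,2\}$ on $n_{2k+1}<i<n_{2k+2}$; by the symmetry $a_i\leftrightarrow b_i$ of conditions (5) and (6) the companion sequence $b_i$ takes $3+j$ at $n_0$, $B+1$ or $0$ at the $n_k$, and the two ranges $\{0,2\}$ and $\{B-1,B+1\}$ interchanged. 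Since $B-1\ge 7$, none of these ranges other than the slot $n_0$ contains $3$. Consequently a representation with $a_i=3$ for infinitely many $i$ would force infinitely many such indices into $i\ge n_0$, where $3$ occurs only at $n_0$ — impossible. Hence no second representation exists, so appending the all-$3$ tail yields a $1$-point within $\varepsilon$ of $x$, and $E_1$ is dense.

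The main obstacle is purely bookkeeping: justifying the admissible-digit table above and, in particular, verifying that interchanging the roles of the two representations creates no occurrence of the digit $3$ outside $n_0$. Once that table is in hand, both density statements follow from the density of the finite subsums exactly as in \cite{BPW}. The only genuinely new point compared with the base-$4$ argument is that for $B\ge 8$ the bare digit $3$ (rather than the pair $2,3$) already certifies uniqueness, since the range $\{B-1,B+1\}$ of condition (5) no longer contains $3$; this is what lets the $E_1$ construction use a single-symbol tail.
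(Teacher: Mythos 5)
Your proposal is correct and takes essentially the approach the paper intends: the paper gives no separate argument for this corollary, deferring to the \cite{BPW}-style reasoning it sketched for the base-$4$ case (density of finite subsums plus prefix-independence of the two-representation characterization), and that is precisely what you carry out — appending the two-branch tail $2,(B-1),(B-1),\dots$ versus $3,0,0,\dots$ after an arbitrary prefix for $E_2$, and appending a tail incompatible with the pattern of Corollary \ref{ogolne} for $E_1$. The one caveat is that your uniqueness certificate, the bare digit $3$ occurring infinitely often, genuinely requires $B-1>3$, i.e.\ $r\geq 2$: for $r=1$ the family of Corollary \ref{ogolne} degenerates to the Guthrie--Nymann Cantorval itself, where $B-1=3$ lies in the condition-(5) range and a constant-$3$ tail certifies nothing; there one must instead use infinitely many occurrences of a pair such as $(2,3)$, exactly as in \cite{BPW} and in Theorem \ref{GNCharacterizationUnique}. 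You flag this distinction yourself, and the $r=1$ case is already settled by the paper's earlier discussion of the Guthrie--Nymann Cantorval, so this is a matter of stating the scope ($r\geq 2$) rather than a mathematical gap; your simpler single-digit certificate is in fact a small bonus of the larger bases.
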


\section{Topological size of the uniqueness set}
In this section we will focus on topological size of the set $U(x_n)$. We show that it has the Baire property and we give sufficient condition for $U(x_n)$ to be comeager in $E(x_n)$ when $E(x_n)$ is a Cantorval. Some topological results of presented here can be proved in more general setting and they are very likely a mathematical folklore. We presented their proofs for the sake of completeness and for the reader convenience.  

We say that a function $f:\{0,1\}^\N\to\R$ is semi-open if $f(U)$ has non-empty interior in $\R$ for every non-empty open set $U$ in $\{0,1\}^\N$.
\begin{proposition}\label{PropositionSemiOpen}
Let $f:\{0,1\}^\N\to\R$ be a continuous semi-open function. Then 
 $f^{-1}(A)$ is nowhere dense in $\{0,1\}^\N$ for any nowhere dense subset $A$ of $\R$.
\end{proposition}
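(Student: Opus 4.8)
The plan is to reduce to the case of a closed nowhere dense set and then apply semi-openness directly. First I would observe that if $A$ is nowhere dense, then so is its closure $\overline{A}$, which is a closed set with empty interior. Since $A\subseteq\overline{A}$ gives $f^{-1}(A)\subseteq f^{-1}(\overline{A})$, and since any subset of a nowhere dense set is again nowhere dense, it suffices to prove the statement under the assumption that $A$ itself is closed with empty interior. I would therefore assume from now on that $A$ is closed.

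With $A$ closed, continuity of $f$ makes $f^{-1}(A)$ a closed subset of $\{0,1\}^\N$. For a closed set, being nowhere dense is equivalent to having empty interior, so the whole task reduces to showing that $f^{-1}(A)$ contains no nonempty open set.

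To this end I would argue by contradiction. Suppose $f^{-1}(A)$ has nonempty interior; then it contains some nonempty open set $U\subseteq\{0,1\}^\N$. Applying $f$ gives $f(U)\subseteq A$. But $f$ is semi-open, so $f(U)$ has nonempty interior in $\R$, and hence $A$ has nonempty interior, contradicting that $A$ is nowhere dense. This contradiction shows that $f^{-1}(A)$ has empty interior and is therefore nowhere dense.

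I do not anticipate a serious obstacle; the argument is short and essentially just unwinds the definitions. The one point that genuinely requires care is the reduction to closed $A$: for a general nowhere dense set the preimage need not be closed, so the empty-interior criterion cannot be invoked immediately. Passing to $\overline{A}$, using monotonicity of preimages, and exploiting the fact that nowhere density is inherited by subsets is precisely what removes this difficulty and lets the semi-openness hypothesis do its work on a closed target set.
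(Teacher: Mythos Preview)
Your proof is correct. The paper takes a slightly different organizational route: rather than reducing to closed $A$ and then showing $f^{-1}(A)$ has empty interior, it verifies nowhere density directly by starting from an arbitrary nonempty open $U\subseteq\{0,1\}^\N$, using semi-openness to obtain a nonempty open $V\subseteq f(U)$, using nowhere density of $A$ to find a nonempty open $W\subseteq V$ disjoint from $A$, and then noting that $f^{-1}(W)\cap U$ is a nonempty open subset of $U$ missing $f^{-1}(A)$. Your closure reduction buys a one-line contradiction at the end (nonempty interior of $f(U)$ versus empty interior of $A$), while the paper's direct approach avoids the reduction but has to construct $W$ and pull it back. The underlying idea---semi-openness forces interior in the image, which is incompatible with a nowhere dense target---is identical in both arguments, and they are of comparable length.
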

\begin{proof}
Let $U$ be a non-empty open subset of $\{0,1\}^\N$. Since $f$ is semi-open, there is a non-empty open set $V$ in $\R$ such that $V\subseteq f(U)$. The set $A$ is nowhere dense, so one can find a non-empty open $W\subseteq\R$ with $W\subseteq V\setminus A$. By the continuity of $f$, the pre-image $f^{-1}(W)$ is open as well. Moreover, $f^{-1}(W)\subset U\setminus f^{-1}(A)$, which shows that the set $f^{-1}(A)$ is nowhere dense in $\{0,1\}^\N$.  
\end{proof}

We have already noted that the function $(\varepsilon_n)\mapsto\sum_{n=1}^\infty\varepsilon_nx_n$ is a continuous mapping from $\{0,1\}^\N$ to $\R$. Now, we will show that it is also, in the case we are interested in here, a semi-open function. 
\begin{proposition}\label{fIsSemiOpen}
Let $(x_n)$ be a summable sequence of reals and let $f:\{0,1\}^\N\to\R$ be given by $f((\varepsilon_n)_{n=1}^\infty)=\sum_{n=1}^\infty\varepsilon_nx_n$. If $E(x_n)$ has a non-empty interior, then $f$ is semi-open. 
\end{proposition}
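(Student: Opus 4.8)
The plan is to reduce the statement to basic open sets and then compute the image of $f$ on such a set explicitly. The basic clopen sets of $\{0,1\}^\N$ are the cylinders $[s]=\{(\varepsilon_n)\in\{0,1\}^\N:\varepsilon_i=s_i\text{ for }i\le k\}$ determined by a finite word $s\in\{0,1\}^k$, and every non-empty open $U$ contains such a cylinder. Since on $[s]$ the first $k$ coordinates are frozen, the map $f$ splits as a constant plus the tail map: for $(\varepsilon_n)\in[s]$ we have $f((\varepsilon_n))=\sum_{i=1}^{k}s_i x_i+\sum_{n>k}\varepsilon_n x_n$, and as the tail ranges over all of $\{0,1\}^{\{k+1,k+2,\dots\}}$ the second summand runs over exactly $E((x_n)_{n>k})$. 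Writing $c_s=\sum_{i=1}^{k}s_i x_i$, this gives the clean identity $f([s])=c_s+E((x_n)_{n>k})$.

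Because a translate has non-empty interior exactly when the original set does, it suffices to prove that the tail achievement set $E((x_n)_{n>k})$ has non-empty interior whenever $E(x_n)$ does. First I would note the dual decomposition $E(x_n)=\bigcup_{s\in\{0,1\}^k}\bigl(c_s+E((x_n)_{n>k})\bigr)$, a \emph{finite} union of translates of the tail set. Now $E((x_n)_{n>k})$ is compact, being a continuous image of the Cantor space; hence if it had empty interior it would be a closed set with empty interior, i.e. nowhere dense, and then $E(x_n)$ would be a finite union of nowhere dense sets and so itself nowhere dense, contradicting the hypothesis that $E(x_n)$ has non-empty interior. Alternatively one may invoke Proposition \ref{skonczonezmiany} together with the Guthrie--Nymann trichotomy of Theorem \ref{GNCantorval}: a non-empty interior rules out the Cantor-set case, leaving a finite union of closed intervals or a Cantorval, and Proposition \ref{skonczonezmiany} preserves this type after deleting finitely many terms, so $E((x_n)_{n>k})$ again has non-empty interior.

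To conclude, let $U\subseteq\{0,1\}^\N$ be non-empty and open, choose a cylinder $[s]\subseteq U$, and use monotonicity of images to get $f([s])\subseteq f(U)$. By the previous step $f([s])=c_s+E((x_n)_{n>k})$ has non-empty interior, hence so does $f(U)$, which is precisely the assertion that $f$ is semi-open.

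The only genuinely substantive point is the transfer of non-empty interior from the full achievement set to a tail of it, i.e. the fact that removing finitely many terms cannot create interior out of nothing. Compactness of the tail set is exactly what makes this work, since it upgrades ``empty interior'' to ``nowhere dense'' and lets the finite-union argument close; the cylinder bookkeeping and the splitting of $f$ are then routine.
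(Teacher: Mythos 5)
Your proof is correct, and for the one substantive step it takes a genuinely different route from the paper. Both arguments reduce to a cylinder $[s]$ and compute $f([s])=c_s+E((x_n)_{n>k})$, so everything hinges on showing the tail set $E((x_n)_{n>k})$ inherits non-empty interior. The paper handles this by citing Proposition \ref{skonczonezmiany} (deleting finitely many terms preserves the type of the achievement set), implicitly together with the Guthrie--Nymann trichotomy: non-empty interior forces $E(x_n)$ to be a Cantorval or a finite union of intervals, the tail set is then of the same type, hence has interior. You instead give a direct, self-contained argument: $E(x_n)$ decomposes as the \emph{finite} union $\bigcup_{s\in\{0,1\}^k}\bigl(c_s+E((x_n)_{n>k})\bigr)$, the tail set is compact (continuous image of Cantor space), so empty interior would upgrade to nowhere dense, and a finite union of nowhere dense sets is nowhere dense --- contradicting the hypothesis. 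What your route buys is independence from the classification machinery: Proposition \ref{skonczonezmiany} is stated in the paper without proof, and the trichotomy (Theorem \ref{GNCantorval}) is a deep result, whereas your compactness-plus-finite-union argument is elementary and works verbatim for any absolutely summable sequence. What the paper's route buys is brevity inside its own ecosystem, since that infrastructure is already set up and reused elsewhere. You also correctly note the paper's route as an alternative, so nothing is missing; one could only quibble that ``summable'' should be read as absolutely summable to guarantee $f$ is well-defined and continuous, a convention both you and the paper adopt tacitly.
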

\begin{proof}
If $E(x_n)$ has a non-empty interior in $\R$, then $E(x_n)$ is either Cantorval or finite union of compact intervals. Let $U$ be a non-empty open subset of $\{0,1\}^\N$. Then $U$ contains a basic set, that is the set of the form $B_{(d_1,\dots,d_k)}=\{(\varepsilon_n)\in\{0,1\}^\N:\varepsilon_i=d_i$ for $i\leq k\}$ where $d_i=0,1$ are fixed 0-1 digits. Note that 
\[
f(B_{(d_1,\dots,d_k)})=\sum_{i=1}^kd_ix_i+E((x_n)_{n\geq k})
\]
By Proposition \ref{skonczonezmiany} the set $E((x_n)_{n\geq k})$ has a non-empty interior, so does $f(U)$.
\end{proof}

A subset $A$ of $\{0,1\}^\N$ is called Fin-invariant if 
\[
(\varepsilon_1,\varepsilon_2,\dots)\in A\iff (d_1,\dots,d_k,\varepsilon_{k+1},\varepsilon_{k+2},\dots)\in A
\]
for all $d_1,\dots,d_k\in\{0,1\}$. Note that a Fin-invariant set is invariant on changing finitely many coordinates. 
Recall that a set $A$ in a topological space has the Baire property if there are an open set $U$ and a meager set $M$ with $A=U\bigtriangleup M$, where $\bigtriangleup$ stands for the symmetric difference operator; in particular $A$ is comeager in $U$.  
\begin{proposition}\label{MeagerOrComeager}
Assume that $A$ has the Baire property and is Fin-invariant. Then $A$ is either meager or comeager subset of $\{0,1\}^\N$.
\end{proposition}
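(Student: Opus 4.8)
The plan is to prove a topological zero-one law, mirroring the classical fact that a tail-type event with the Baire property is topologically trivial. First I would suppose that $A$ is not meager and derive that it must then be comeager. Using the Baire property, write $A=U\bigtriangleup M$ with $U$ open and $M$ meager. If $U$ were empty, then $A=M$ would be meager, contrary to assumption; hence $U$ is non-empty and therefore contains some basic clopen set $B_{(d_1,\dots,d_k)}=\{(\varepsilon_n):\varepsilon_i=d_i\text{ for }i\le k\}$.

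Next I would observe that $A$ is comeager in this basic set. Indeed, on $B_{(d_1,\dots,d_k)}\subseteq U$ the symmetric difference forces $A$ to coincide with $B_{(d_1,\dots,d_k)}$ outside the meager set $M$, so $B_{(d_1,\dots,d_k)}\setminus A\subseteq M$ is meager.

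The key step is to transport this comeagerness to every basic clopen set of the same length $k$. For each tuple $(e_1,\dots,e_k)\in\{0,1\}^k$ there is a natural homeomorphism $\sigma:B_{(d_1,\dots,d_k)}\to B_{(e_1,\dots,e_k)}$ which replaces the first $k$ coordinates by $e_1,\dots,e_k$ and leaves the tail unchanged. By Fin-invariance of $A$, a point lies in $A\cap B_{(d_1,\dots,d_k)}$ if and only if its image lies in $A\cap B_{(e_1,\dots,e_k)}$, so $\sigma$ carries $A\cap B_{(d_1,\dots,d_k)}$ onto $A\cap B_{(e_1,\dots,e_k)}$. Since homeomorphisms preserve category, $A$ is comeager in $B_{(e_1,\dots,e_k)}$ as well. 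This is the main obstacle in the sense that it is the only place where Fin-invariance is genuinely used; everything else is formal manipulation of the Baire property.

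Finally, the $2^k$ sets $B_{(e_1,\dots,e_k)}$ form a finite clopen partition of $\{0,1\}^\N$. As $A$ is comeager in each piece, each $B_{(e_1,\dots,e_k)}\setminus A$ is meager (meagerness in a clopen piece coincides with meagerness in the whole space), and a finite union of meager sets is meager; hence $\{0,1\}^\N\setminus A$ is meager and $A$ is comeager. Thus $A$ is either meager or comeager.
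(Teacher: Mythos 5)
Your proof is correct and follows essentially the same route as the paper: assume $A$ is non-meager, use the Baire property to find a basic clopen set $B_{(d_1,\dots,d_k)}$ on which $A$ is comeager, and then use Fin-invariance to spread comeagerness over all $2^k$ basic sets of length $k$, which cover $\{0,1\}^\N$. The only difference is that you spell out the details the paper leaves implicit (extracting the basic set from $A=U\bigtriangleup M$, and transporting category via the coordinate-changing homeomorphism), which is a faithful elaboration rather than a different argument.
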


\begin{proof}
Assume that $A$ is not meager. Since $A$ has the Baire property, there is  basic open set $B_{(\bar{d}_1,\dots,\bar{d}_k)}$ such that $A$ is comeager in it. Note that
\[
\{0,1\}^\N=\bigcup_{(d_1,\dots,d_k)\in\{0,1\}^k}B_{(d_1,\dots,d_k)}.
\]
Thus by the Fin-invariance, $A$ is comeager in $\{0,1\}^\N$.  
\end{proof}

A subset $B$ of $E(x_n)$ is called Fin-invariant if 
\[
\sum_{i=1}^\infty\varepsilon_ix_i\in B\iff \sum_{i=1}^kd_ix_i+\sum_{i=k+1}^\infty\varepsilon_ix_i\in B
\]
for all $d_1,\dots,d_k\in\{0,1\}$.
\begin{proposition}
The set $U(x_n)$ is a co-analytic subset of $\R$. In particular it has the Baire property.
\end{proposition}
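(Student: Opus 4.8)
The plan is to exhibit $U(x_n)$ explicitly as a co-analytic set by writing down a formula that is arithmetically $\Pi^1_1$ over the Polish space $\R$, and then to invoke the classical fact that every co-analytic (indeed every analytic) set has the Baire property. The central idea is that the uniqueness condition is a statement of the form ``for all pairs of sequences, if both represent $t$ then the sequences coincide'', which is a universal quantification over the Polish space $\{0,1\}^\N$ of a Borel condition; such statements define co-analytic sets.

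First I would fix notation. Consider the product space $X=\R\times\{0,1\}^\N\times\{0,1\}^\N$, which is Polish. Inside $X$ define
\[
P=\Bigl\{\bigl(t,(\varepsilon_n),(\delta_n)\bigr): t=\sum_{n=1}^\infty\varepsilon_nx_n=\sum_{n=1}^\infty\delta_nx_n \text{ and } (\varepsilon_n)\neq(\delta_n)\Bigr\}.
\]
The two summation conditions define closed subsets of $X$, since $f((\varepsilon_n))=\sum_{n=1}^\infty\varepsilon_nx_n$ is continuous by Proposition \ref{onlycantor}(1), and the inequality $(\varepsilon_n)\neq(\delta_n)$ is an open (hence Borel) condition. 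Therefore $P$ is a Borel, indeed $F_\sigma$, subset of $X$. The projection of $P$ onto the first coordinate $\R$ is exactly the set of those $t\in E(x_n)$ that admit at least two distinct representations, that is, $E(x_n)\setminus U(x_n)$ together with the understanding that points outside $E(x_n)$ have no representation at all. A continuous image of a Borel set is analytic, so this projection $N=\{t: t \text{ has at least two representations}\}$ is an analytic subset of $\R$.

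Next I would recover $U(x_n)$ from $N$. Since $U(x_n)=E(x_n)\setminus N$ and $E(x_n)$ is compact (it is the continuous image of the compact space $\{0,1\}^\N$), $E(x_n)$ is closed, hence Borel. The difference of a Borel set and an analytic set is the intersection of a Borel set with a co-analytic set, and is therefore co-analytic; concretely $U(x_n)=E(x_n)\cap(\R\setminus N)$ is co-analytic because $\R\setminus N$ is co-analytic and the class of co-analytic sets is closed under intersection with Borel sets. This establishes the first assertion.

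Finally, for the ``in particular'' clause I would cite the classical theorem that every analytic subset of a Polish space has the Baire property, and hence so does every co-analytic set, since the Baire property is preserved under complementation. Thus $U(x_n)$ has the Baire property. I expect the only genuinely delicate point to be bookkeeping: one must be careful that the projection step yields the non-uniqueness set rather than something larger, and that subtracting from the compact set $E(x_n)$ correctly lands one in the co-analytic class rather than merely the analytic class. The descriptive-set-theoretic facts themselves (continuous image of Borel is analytic, analytic sets have the Baire property, closure properties of the pointclasses) are standard and may be quoted without proof.
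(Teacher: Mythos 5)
Your proof is correct, but it follows a genuinely different route from the paper's. The paper writes $U(x_n)=\{y\in\R:\exists!\,(\varepsilon_n)\,((\varepsilon_n),y)\in\mathrm{graph}(f)\}$, notes that $\mathrm{graph}(f)$ is closed, and then invokes Lusin's theorem on the uniqueness quantifier (Theorem 18.11 in \cite{kechris}), which says that for a Borel relation the set of points whose section is a singleton is $\Pi^1_1$; the Baire property then follows from the Lusin--Sierpi\'nski theorem, just as in your argument. You avoid the uniqueness-quantifier theorem entirely: you exhibit the non-uniqueness set $N$ as the projection of the Borel set $P$, hence analytic, and recover $U(x_n)=E(x_n)\cap(\R\setminus N)$, using the compactness of $E(x_n)$ to keep the intersection co-analytic. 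It is worth seeing clearly what makes your elementary bookkeeping succeed: in general the decomposition $\{\exists!\}=\{\exists\}\setminus\{\exists\,\text{at least two}\}$ only presents the uniqueness set as a difference of two analytic sets, i.e.\ an intersection of an analytic with a co-analytic set, and such sets need not be $\Pi^1_1$ (take the subtracted set empty). It is precisely because the existence set here is $E(x_n)$, which is compact and hence Borel, that the intersection lands in $\Pi^1_1$; this is the special feature of the achievement-set situation that lets you bypass the deeper theorem. Conversely, the paper's appeal to Lusin's theorem is shorter and would remain valid even in settings where the existence set is merely analytic rather than Borel. The Baire-property clause is handled identically in both proofs: analytic sets have the Baire property, and the sets with the Baire property form a $\sigma$-algebra, so co-analytic sets (and intersections with Borel sets) inherit it.
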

\begin{proof}
Let $f:\{0,1\}^\N\to\R$ be given by $f((\varepsilon_n)_{n=1}^\infty)=\sum_{n=1}^\infty\varepsilon_n x_n$. Since $f$ is continuous, its graph is closed subset of $\{0,1\}^\N\times\R$. Note that 
\[
U(x_n)=\{y\in\R:\exists!(\varepsilon_n)\;\;((\varepsilon_n),y)\in\text{graph}(f)\}
\]
where $\exists!$ stands for 'exists exactly one'. Therefore by Lusin Theorem \cite[Theorem 18.11]{kechris} the set $U(x_n)$ is co-analytic. By Lusin-Sierpi\'nski Theorem \cite[Theorem 21.6]{kechris} every analytic sets have the Baire property, which implies that co-analytic sets have the Baire property as well. 
\end{proof}

\begin{theorem}\label{UisMeagerOrComeager}
Assume that $E(x_n)$ has non-empty interior and $B\subseteq E(x_n)$ is Fin-invariant. Then  either $B$ is meager or $E(x_n)\setminus B$ is meager on $\R$.  
\end{theorem}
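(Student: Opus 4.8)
The plan is to pull the problem back to the Cantor space $\{0,1\}^\N$, apply the dichotomy of Proposition \ref{MeagerOrComeager} there, and then push the conclusion forward to $\R$ using that the map $f((\varepsilon_n))=\sum_n\varepsilon_n x_n$ is semi-open. The key structural fact I would invoke at the outset is that, since $E(x_n)$ has non-empty interior, $f$ is semi-open by Proposition \ref{fIsSemiOpen}, and hence by Proposition \ref{PropositionSemiOpen} preimages of nowhere dense sets are nowhere dense, so preimages of meager sets are meager. One caveat: the dichotomy cannot hold for a completely arbitrary $B$ (a Bernstein-type Fin-invariant set would violate it), so the argument rests on $B$ having the Baire property; this is automatic in the intended application $B=U(x_n)$, and I treat it as part of the hypothesis.

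First I would check that $f^{-1}(B)$ is Fin-invariant in $\{0,1\}^\N$. This is a direct translation of definitions: $(\varepsilon_n)\in f^{-1}(B)$ means $\sum_n\varepsilon_n x_n\in B$, and by the Fin-invariance of $B\subseteq E(x_n)$ this is equivalent to $\sum_{i\le k}d_i x_i+\sum_{i>k}\varepsilon_i x_i\in B$, i.e.\ to $(d_1,\dots,d_k,\varepsilon_{k+1},\dots)\in f^{-1}(B)$, for all $d_1,\dots,d_k$. Next I would verify that $f^{-1}(B)$ has the Baire property: writing $B=V\bigtriangleup M$ with $V$ open and $M$ meager gives $f^{-1}(B)=f^{-1}(V)\bigtriangleup f^{-1}(M)$, where $f^{-1}(V)$ is open by continuity and $f^{-1}(M)$ is meager by the semi-openness of $f$. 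Hence Proposition \ref{MeagerOrComeager} applies and $f^{-1}(B)$ is either meager or comeager in $\{0,1\}^\N$.

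The crux is the transfer back to $\R$, for which I would prove the following reflection principle: if $C\subseteq E(x_n)$ has the Baire property and $f^{-1}(C)$ is meager, then $C$ is meager in $\R$. Arguing by contraposition, if $C$ is non-meager then, having the Baire property, it is comeager in some non-empty open $V\subseteq\R$; since $E(x_n)$ is compact, hence closed, and $V\setminus E(x_n)\subseteq V\setminus C$ is meager, the open set $V\setminus E(x_n)$ is empty, so $V\subseteq E(x_n)$ and $f^{-1}(V)$ is a non-empty open set. Because $V\setminus C$ is meager and $f$ is semi-open, $f^{-1}(V\setminus C)=f^{-1}(V)\setminus f^{-1}(C)$ is meager, so $f^{-1}(C)$ is comeager in the non-empty open set $f^{-1}(V)$ and therefore non-meager, a contradiction.

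Finally I would combine the pieces. If $f^{-1}(B)$ is meager, the reflection principle with $C=B$ gives that $B$ is meager. If instead $f^{-1}(B)$ is comeager, then $f^{-1}(E(x_n)\setminus B)=\{0,1\}^\N\setminus f^{-1}(B)$ is meager; since $E(x_n)\setminus B=E(x_n)\cap(\R\setminus B)$ again has the Baire property, the reflection principle with $C=E(x_n)\setminus B$ gives that $E(x_n)\setminus B$ is meager. Either way the stated dichotomy holds. I expect the main obstacle to be exactly this reflection step: continuous images and preimages do not in general preserve Baire category, and it is precisely semi-openness (Proposition \ref{PropositionSemiOpen}), together with the localization afforded by the Baire property, that makes the transfer go through in both directions.
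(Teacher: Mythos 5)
Your proof is correct and follows essentially the same route as the paper's: pull $B$ back to $\{0,1\}^\N$, check Fin-invariance of $f^{-1}(B)$, invoke the dichotomy of Proposition \ref{MeagerOrComeager}, and transfer back to $\R$ via semi-openness (Propositions \ref{fIsSemiOpen} and \ref{PropositionSemiOpen}). The only real difference is organizational, in the transfer step: the paper argues by contradiction, assuming both $B$ and $E(x_n)\setminus B$ are non-meager, localizing them (via the Baire property) as comeager in two disjoint non-empty relatively open subsets $U,V$ of $E(x_n)$, and pulling both back to conclude that $f^{-1}(B)$ is simultaneously non-meager (being comeager in $f^{-1}(U)$) and non-comeager (being meager in $f^{-1}(V)$), contradicting the dichotomy; you instead prove a direct ``reflection'' lemma (if $C\subseteq E(x_n)$ has the Baire property and $f^{-1}(C)$ is meager, then $C$ is meager) and apply it once to each horn of the dichotomy. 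These are the same ingredients arranged differently. Your version needs the small extra step that a non-empty open $V\subseteq\R$ on which $C$ is comeager actually lies inside $E(x_n)$, which you handle correctly using closedness of $E(x_n)$ and the Baire category theorem; the paper avoids this by working with relatively open subsets of $E(x_n)$ together with the fact that a set meager in the subspace $E(x_n)$ is meager in $\R$.

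One point where you are more careful than the paper: the theorem as printed carries no regularity hypothesis on $B$, and, as you observe, it is then false --- a Fin-invariant Bernstein-type set violates the dichotomy (Fin-invariance can be arranged because the equivalence classes of finite modification are countable). The paper's own proof silently uses such a hypothesis twice: it asserts that $A=f^{-1}(B)$ is co-analytic ``as a continuous pre-image of a co-analytic set,'' although $B$ was never assumed co-analytic, and the localization of $B$ and $E(x_n)\setminus B$ on open sets requires the Baire property. So adding ``$B$ has the Baire property'' to the hypotheses is not a deviation but a necessary repair, and it is harmless in the intended applications (e.g.\ Theorem \ref{SetOfUniqGutrieNymann}, where the relevant set is analytic).
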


\begin{proof}
Let $f:\{0,1\}^\N\to\R$ be given by $f((\varepsilon_n)_{n=1}^\infty)=\sum_{n=1}^\infty\varepsilon_nx_n$. Let $A=f^{-1}(B)$ and $d_1,\dots,d_k\in\{0,1\}$. Then
\[
(\varepsilon_n)\in A\iff f(\varepsilon_n)\in B\iff\sum_{n=1}^\infty\varepsilon_nx_n\in B
\]
and by the Fin-invariance of $B$
\[
\sum_{n=1}^\infty\varepsilon_nx_n\in B\iff\sum_{i\leq k}d_ix_i+\sum_{i>k}\varepsilon_ix_i\in B\iff f(d_1,\dots,d_k,\varepsilon_{k+1},\varepsilon_{k+2},\dots)\in B\iff 
\]
\[\iff(d_1,\dots,d_k,\varepsilon_{k+1},\varepsilon_{k+2},\dots)\in A.
\]
Thus $A$ is Fin-invariant. Since $A$ is a continuous pre-image of a co-analytic set, it is co-analytic as well, and consequently it has the Baire property. By Proposition \ref{MeagerOrComeager} the set $A$ is either meager or comeager. 

Suppose that neither $B$ is meager nor $E(x_n)\setminus B$. Then there are two nonempty disjoint open sets $U,V\subset E(x_n)$ such that $B$ is comeager in $U$ and $E(x_n)\setminus B$ is comeager in $V$. By Proposition \ref{PropositionSemiOpen} and Proposition \ref{fIsSemiOpen} $f^{-1}(B)$ is meager in $f^{-1}(V)$ and $f^{-1}(E(x_n)\setminus B)$ is meager in $f^{-1}(U)$. This shows that $A$ is neither meager nor comeager, which yields a contradiction. 
\end{proof}
Note that the assertion of Theorem \ref{UisMeagerOrComeager} holds also without assumption that $E(x_n)$ has non-empty interior, but in this case it is trivial.

For a topological space $X$, by $\mathcal{M}_X$ we denote the family of all meager sets in $X$. By $\mathcal{M}_X\upharpoonright E$ we denote the of all restriction of meager sets to $E$, that is
\[
\mathcal{M}_X\upharpoonright E=\{A\subset X:A\subset E\text{ and }A\in\mathcal{M}_X\}.
\]
\begin{proposition}\label{MeagerInSubspaceEqualsMeagerRestricted}
Let $E\subseteq\R$ be a compact set. Then $\mathcal{M}_E=\mathcal{M}_\R\upharpoonright E$ if and only if
\begin{equation}\label{OpenContainsInteval}
\text{every open set in }E \text{ contains an open interval}. 
\end{equation}
\end{proposition}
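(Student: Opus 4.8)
The plan is to reduce everything to the level of nowhere dense sets, since $\mathcal{M}_E$ and $\mathcal{M}_\R\upharpoonright E$ are the $\sigma$-ideals generated by the respective nowhere dense families, and a countable union of sets lands inside $E$ exactly when each piece does. First I would record the elementary fact that, because $E$ is closed, for any $A\subseteq E$ the closure of $A$ taken in the subspace $E$ coincides with its closure $\overline A$ in $\R$, and moreover $\overline A\subseteq E$. From this one inclusion comes for free, with no hypothesis at all: if $A\subseteq E$ is nowhere dense in $E$, then it is nowhere dense in $\R$. Indeed, were $\overline A$ to contain an open interval $(a,b)$, then $(a,b)\subseteq\overline A\subseteq E$, so $(a,b)=(a,b)\cap E$ would be a nonempty relatively open subset of $E$ sitting inside $\overline A$, contradicting nowhere density in $E$. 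Passing to countable unions yields $\mathcal{M}_E\subseteq\mathcal{M}_\R\upharpoonright E$ unconditionally, so all the content of the equivalence lies in the reverse inclusion.

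For the \emph{if} direction, assume \eqref{OpenContainsInteval}. I would show that under this hypothesis nowhere dense in $\R$ also forces nowhere dense in $E$. Suppose $A\subseteq E$ is nowhere dense in $\R$ yet $\overline A$ contains a nonempty relatively open $U\subseteq E$; by \eqref{OpenContainsInteval} the set $U$ contains an open interval $(a,b)$, whence $(a,b)\subseteq\overline A$, contradicting that $\overline A$ has empty interior in $\R$. Thus the two notions of nowhere density agree on subsets of $E$. Then, after intersecting the nowhere dense pieces of a given $\R$-meager subset of $E$ with $E$ (which keeps them nowhere dense and preserves the union), taking countable unions gives $\mathcal{M}_\R\upharpoonright E\subseteq\mathcal{M}_E$, completing the equality.

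For the \emph{only if} direction I argue by contraposition: assuming \eqref{OpenContainsInteval} fails, I exhibit a set in $\mathcal{M}_\R\upharpoonright E\setminus\mathcal{M}_E$. Pick a nonempty open $U\subseteq E$ containing no open interval. Since $E$ is compact metric it is a Baire space, so the nonempty open set $U$ is non-meager in $E$, i.e.\ $U\notin\mathcal{M}_E$. The crux, and the step I expect to demand the most care, is verifying that this same $U$ is nonetheless nowhere dense (hence meager) in $\R$: its $\R$-interior is empty because it contains no interval, and the real issue is that its closure also has empty interior. If $(a,b)\subseteq\overline U$, then $(a,b)\subseteq E$ and $(a,b)$ must meet $U$ at some point $x_0$; writing $U=V\cap E$ with $V$ open in $\R$, a sufficiently small interval about $x_0$ lies in both $V$ and $(a,b)\subseteq E$, hence in $U$, contradicting that $U$ contains no interval. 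Therefore $U\in\mathcal{M}_\R\upharpoonright E$ while $U\notin\mathcal{M}_E$, so the two families differ. Here I read \eqref{OpenContainsInteval} as referring to nonempty open sets and nondegenerate open intervals, without which the statement would be vacuous.
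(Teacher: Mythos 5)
Your proof is correct and follows essentially the same route as the paper's: both arguments work at the level of nowhere dense sets, use closedness of $E$ to identify closures taken in $E$ with closures taken in $\R$ (so that the inclusion $\mathcal{M}_E\subseteq\mathcal{M}_\R\upharpoonright E$ is unconditional), and handle the ``only if'' direction by contraposition, starting from a non-empty relatively open $U\subseteq E$ containing no interval and showing that its $\R$-closure, which lies in $E$, contains no interval either. The one genuine difference is that you are more careful at the step the paper glosses over. The paper reduces the equality of the two ideals to the statement that every nowhere dense subset of $\R$ contained in $E$ is nowhere dense in $E$, and treats this reduction as immediate; but the nontrivial half of it --- passing from ``$U$ is open, non-empty, and nowhere dense in $\R$'' to ``$U$ witnesses $\mathcal{M}_\R\upharpoonright E\neq\mathcal{M}_E$'' --- needs the fact that a non-empty open subset of $E$ is non-\emph{meager} in $E$, not merely non-nowhere-dense. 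You supply exactly this by invoking the Baire category theorem for the compact (hence completely metrizable) space $E$, which is also the only point where compactness beyond mere closedness enters. So your write-up is the same proof, but self-contained where the paper's version is slightly elliptical.
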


\begin{proof}
Every meager subset of subspace $E$ is also a meager in the whole space $\R$. Thus $\mathcal{M}_E\subseteq\mathcal{M}_\R\upharpoonright E$ holds no matter what $E$ is. We need to show that the condition 
\begin{equation}\label{NWDisNWD}
\text{every nowhere dense subset of }\R\text{ contained in }E\text{ is nowhere dense in }E    
\end{equation}
is equivalent to condition \eqref{OpenContainsInteval}. 

Assume $\neg\eqref{OpenContainsInteval}$. Then there is a set $V$ open in $E$ which does not contain open interval. Since $E$ is closed, then $\text{cl}_{\R}(V)\subset E$. Since $\text{cl}_{\R}(V)=\text{cl}_{E}(V)$, then $\text{Int}_E(\text{cl}_{\R}(V))=V$, which implies that $\text{cl}_{\R}(V)$ does not contain any open interval, and consequently it is nowhere dense in $\R$. So is $V$ and we obtain $\neg\eqref{NWDisNWD}$.

Now, let us assume $\neg\eqref{NWDisNWD}$. Then there is a set $A$ nowhere dense in $\R$ which contains a non-empty set $V$ open in $E$. Since $V$ is nowhere dense in $\R$, it does not contain any open interval, and we obtain $\neg\eqref{OpenContainsInteval}$.  
\end{proof}

\begin{corollary}\label{CorollaryMeagerMeager}
Assume that $E=E(x_n)$ contains an open interval. Then $\mathcal{M}_E=\mathcal{M}_\R\upharpoonright E$. 
\end{corollary}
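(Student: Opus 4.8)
The plan is to deduce this directly from Proposition~\ref{MeagerInSubspaceEqualsMeagerRestricted}. Since $E=E(x_n)$ is the achievement set of a summable series it is compact, so the corollary will follow once I verify condition~\eqref{OpenContainsInteval}, namely that every non-empty open subset $V$ of $E$ contains an open interval of $\R$. The whole argument then reduces to exploiting the self-similar decomposition of $E(x_n)$ together with Proposition~\ref{skonczonezmiany}.

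First I would fix a non-empty open set $V\subseteq E$, choose a point $y\in V$, and pick $\delta>0$ with $E\cap(y-\delta,y+\delta)\subseteq V$. The idea is to localise $y$ inside one small copy of a tail achievement set. Concretely, since $\sum_n x_n<\infty$, I would choose $k\in\N$ so large that $\sum_{n>k}x_n<\delta/2$, and then use the decomposition
\[
E(x_n)=\bigcup_{A\subseteq\{1,\dots,k\}}\Big(\textstyle\sum_{i\in A}x_i+E((x_n)_{n>k})\Big).
\]
Because $y\in E$, there is some $A\subseteq\{1,\dots,k\}$ with $y\in s+E((x_n)_{n>k})$, where $s=\sum_{i\in A}x_i$. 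The set $s+E((x_n)_{n>k})$ lies in an interval of length $\sum_{n>k}x_n<\delta/2$, so every one of its points is within $\delta/2$ of $y$; hence $s+E((x_n)_{n>k})\subseteq E\cap(y-\delta,y+\delta)\subseteq V$.

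It then remains to see that this small copy already contains an open interval. Since $E(x_n)$ contains an open interval it has non-empty interior, so by Theorem~\ref{GNCantorval} it is either a finite union of closed intervals or a Cantorval. By Proposition~\ref{skonczonezmiany} the tail $E((x_n)_{n>k})$ is of the very same type, and in either case it has non-empty interior, i.e.\ it contains an open interval of $\R$. Translating by $s$, the set $s+E((x_n)_{n>k})\subseteq V$ contains an open interval, which establishes~\eqref{OpenContainsInteval}. Proposition~\ref{MeagerInSubspaceEqualsMeagerRestricted} then yields $\mathcal{M}_E=\mathcal{M}_\R\upharpoonright E$.

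I do not expect a genuine obstacle here. The only point requiring care is the bookkeeping in the localisation step --- ensuring the chosen tail has diameter below $\delta/2$ so that the entire copy $s+E((x_n)_{n>k})$ fits inside $E\cap(y-\delta,y+\delta)$ --- together with the correct invocation of Proposition~\ref{skonczonezmiany} to transfer the non-empty interior from $E$ to the tail $E((x_n)_{n>k})$.
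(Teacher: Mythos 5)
Your proof is correct, and it verifies the key hypothesis of Proposition~\ref{MeagerInSubspaceEqualsMeagerRestricted} by a genuinely different route than the paper. Both proofs make the same reduction: by that proposition it suffices to show that every non-empty open subset of $E$ contains an open interval. The paper then splits into cases and, for the Cantorval case, simply \emph{cites} a structural fact from the literature (every point of a Cantorval is a limit of its interval components, so every neighbourhood of every point contains one of these intervals). You instead give a self-contained argument: localise $y\in V$ inside a translated tail copy $s+E((x_n)_{n>k})$ of diameter less than $\delta/2$, observe this copy sits inside $V$, and transfer non-empty interior from $E$ to the tail via Theorem~\ref{GNCantorval} and Proposition~\ref{skonczonezmiany}. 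This is essentially the same localisation-plus-tail argument the paper uses to prove semi-openness in Proposition~\ref{fIsSemiOpen}, redeployed here. What your approach buys is independence from the external citation and uniformity (no real case distinction between Cantorval and finite union of intervals beyond noting the tail has non-empty interior); what the paper's approach buys is brevity, at the cost of leaning on a structural property of Cantorvals proved elsewhere. The only points needing care in your write-up --- that the tail copy is genuinely a subset of $E$, and that a ``finite union of closed intervals'' tail has non-empty interior (true, since an achievement set of a series with infinitely many nonzero terms is perfect, so its interval components are non-degenerate) --- both hold, and the second is glossed over identically in the paper's own proof of Proposition~\ref{fIsSemiOpen}.
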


\begin{proof}
By Proposition \ref{MeagerInSubspaceEqualsMeagerRestricted} it is enough to show that any open set in $E$ contains an interval. Since $E$ is the achievement set containing open interval, it is either a finite union of compact intervals or a Cantorval. If $E$ is a finite union of compact intervals, the assertion is clear. If $E$ is Cantorval, then any its point $x$ is a limit of $E$-intervals, that is any neighbourhood of $x$ contains a connected component of $E$ which is an interval \cite{BFPW}. 
\end{proof}

Using Corollary \ref{CorollaryMeagerMeager} we can restate Theorem \ref{UisMeagerOrComeager} as follows

\begin{corollary}\label{MeagerComeagerCorollary}
Assume that $E(x_n)$ is a Cantorval or a finite union of compact intervals and $B\subseteq E(x_n)$ is Fin-invariant. Then $B$ is either meager or comeager in $E(x_n)$.  \end{corollary}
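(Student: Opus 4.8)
The plan is to deduce this corollary directly from Theorem \ref{UisMeagerOrComeager} together with Corollary \ref{CorollaryMeagerMeager}, which reduce the statement to a short bookkeeping argument comparing the two meager ideals $\mathcal{M}_E$ and $\mathcal{M}_\R\upharpoonright E$.

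First I would observe that if $E(x_n)$ is a Cantorval or a finite union of compact intervals, then it contains an open interval, and in particular has non-empty interior in $\R$. This places us simultaneously in the hypotheses of Theorem \ref{UisMeagerOrComeager} and of Corollary \ref{CorollaryMeagerMeager}.

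Next, applying Theorem \ref{UisMeagerOrComeager} to the Fin-invariant set $B$ yields the dichotomy: either $B$ is meager in $\R$, or $E(x_n)\setminus B$ is meager in $\R$. Since both $B$ and $E(x_n)\setminus B$ are subsets of $E$, each of the two alternatives produces a subset of $E$ lying in $\mathcal{M}_\R\upharpoonright E$. By Corollary \ref{CorollaryMeagerMeager} we have $\mathcal{M}_E=\mathcal{M}_\R\upharpoonright E$, so for subsets of $E$ being meager in $\R$ is the same as being meager in the subspace $E$. Hence in the first case $B$ is meager in $E(x_n)$, and in the second case $E(x_n)\setminus B$ is meager in $E(x_n)$, i.e. $B$ is comeager in $E(x_n)$. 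This is exactly the claimed conclusion.

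Since the argument is essentially a translation, there is no genuine obstacle; the only point requiring care is tracking which ambient space the word \emph{meager} refers to at each step. The entire content is transferring the conclusion of Theorem \ref{UisMeagerOrComeager}, stated for meagerness in $\R$, into meagerness relative to the subspace $E$, and this transfer is precisely what Corollary \ref{CorollaryMeagerMeager} licenses once we know that $E$ contains an open interval.
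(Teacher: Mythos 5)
Your proof is correct and is essentially the paper's own argument: the paper presents this corollary as a direct restatement of Theorem \ref{UisMeagerOrComeager} via Corollary \ref{CorollaryMeagerMeager}, which is exactly the transfer of meagerness from $\R$ to the subspace $E(x_n)$ that you spell out. The only step you make explicit that the paper leaves implicit is checking that a Cantorval or a finite union of compact intervals contains an open interval, so both cited results apply.
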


\begin{proposition}
Let $f:\{0,1\}^\N\to\R$ be a continuous semi-open function. Assume that the set $B:=\{x\in\{0,1\}^\N: \{x\}=f^{-1}(f(x))\}$ is nowhere dense in $\{0,1\}^\N$. Then 
$f(B)$ is nowhere dense in $\R$.
\end{proposition}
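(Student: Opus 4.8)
The crux of the argument is the elementary but essential identity
\[
f^{-1}(f(B)) = B.
\]
The inclusion $B \subseteq f^{-1}(f(B))$ is automatic. For the reverse, suppose $x \in f^{-1}(f(B))$, so that $f(x) = f(x')$ for some $x' \in B$. By the definition of $B$ we have $f^{-1}(f(x')) = \{x'\}$, and since $f(x) = f(x')$ forces $x \in f^{-1}(f(x'))$, we conclude $x = x' \in B$. In words: a point of $f(B)$ has exactly one preimage and that preimage already lies in $B$, so pulling $f(B)$ back recovers $B$ exactly. This \emph{saturation} of $B$ with respect to $f$ is what will let me transfer topological smallness between $B$ and $f(B)$.

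The plan is then to argue by contradiction. Suppose $f(B)$ is \emph{not} nowhere dense in $\R$, and set $G = \operatorname{Int}(\overline{f(B)})$, which is then a non-empty open subset of $\R$ satisfying $G \subseteq \overline{f(B)}$. Consequently $f(B)$ is dense in $G$, i.e. every non-empty open $V \subseteq G$ meets $f(B)$. Picking any $y \in f(B) \cap G$ together with a preimage of it shows that $f^{-1}(G)$ is a non-empty open subset of $\{0,1\}^\N$ (open by continuity of $f$).

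The goal is to show that $B$ is dense in $f^{-1}(G)$, which will contradict the hypothesis that $B$ is nowhere dense. Let $W$ be an arbitrary non-empty open subset of $f^{-1}(G)$. Since $f$ is semi-open, $f(W)$ contains a non-empty open set $V$, and because $W \subseteq f^{-1}(G)$ we have $V \subseteq f(W) \subseteq G$. Density of $f(B)$ in $G$ yields a point $y \in f(B) \cap V$; choosing $x \in W$ with $f(x) = y$, the identity $f^{-1}(f(B)) = B$ forces $x \in B$, so $W \cap B \neq \emptyset$. As $W$ was arbitrary, $B$ is dense in the non-empty open set $f^{-1}(G)$, whence $\overline{B}$ has non-empty interior, contradicting the nowhere density of $B$.

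The genuinely substantive point is the identity $f^{-1}(f(B)) = B$; once it is in hand, the proof is a routine combination of semi-openness (to push a non-empty open $W$ forward to a set with interior) and continuity (to pull $G$ back to an open set). I expect no real obstacle beyond the bookkeeping of checking that $f^{-1}(G)$ is non-empty and that the open set $V$ produced by semi-openness can be taken inside $G$ — both of which follow from $W \subseteq f^{-1}(G)$. Note that this is precisely the mechanism that, applied to the subsum map $f((\varepsilon_n)) = \sum \varepsilon_n x_n$, will show $U(x_n) = f(B)$ is meager or nowhere dense whenever the uniqueness set in $\{0,1\}^\N$ is small.
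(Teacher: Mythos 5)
Your proof is correct and rests on essentially the same mechanism as the paper's: the key fact that fibers over $f(B)$ are singletons lying in $B$ (your identity $f^{-1}(f(B))=B$; the paper phrases it as $f(V)\cap f(B)=\emptyset$ whenever $V\cap B=\emptyset$), combined with semi-openness to push open sets forward and continuity to pull open sets back. The only difference is the logical arrangement --- the paper argues directly, exhibiting inside every non-empty open $U\subseteq\R$ a non-empty open subset missing $f(B)$, whereas you argue by contradiction through the interior of $\overline{f(B)}$ --- which is a rearrangement rather than a different idea.
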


\begin{proof}
Let $U$ be a non-empty subset of $\R$. Since $B$ is nowhere dense in $\{0,1\}^\N$, there is non-empty open subset $V$ of open set $f^{-1}(U)$ with $V\cap B=\emptyset$. Since $f$ is semi-open, there is a non-empty open set $W$ contained in $f(V)$. Note that $f(V)\cap f(B)=\emptyset$; otherwise there would be $x\in B$ and $z\in U$ with $f(x)=f(z)$, and $\{x\}\neq\{x,z\}\subseteq f^{-1}(f(x))$ which yields a contradiction with the definition of $B$. Since $W\subset U\setminus f(B)$, then $f(B)$ is nowhere dense. 
\end{proof}

\begin{corollary}\label{CorNWDUniqueSet}
Assume that $E(x_n)$ has a non-empty interior. Let $f((\varepsilon_n)_{n=1}^\infty)=\sum_{n=1}^\infty\varepsilon_n x_n$. If $f^{-1}(U(x_n))$ is nowhere dense, so is $U(x_n)$.
\end{corollary}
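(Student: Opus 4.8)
The plan is to recognize this corollary as an immediate instance of the preceding proposition, once the two relevant sets are correctly identified. Write
\[
B=\{x\in\{0,1\}^\N:\{x\}=f^{-1}(f(x))\}
\]
for the set appearing in that proposition, i.e. the collection of sequences that are the unique preimage of their own image under $f$.

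First I would establish the set identity $f^{-1}(U(x_n))=B$. For $x\in\{0,1\}^\N$ we have $x\in f^{-1}(U(x_n))$ if and only if $f(x)\in U(x_n)$, that is, if and only if $f(x)$ has a unique digital representation; but this is precisely the statement $f^{-1}(f(x))=\{x\}$, i.e. $x\in B$. In the same spirit one checks $f(B)=U(x_n)$: every $x\in B$ is sent to a point whose representation is unique, and conversely every $t\in U(x_n)$ is the image of its single preimage, which by definition lies in $B$. Thus $f^{-1}(U(x_n))=B$ and $f(B)=U(x_n)$.

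With these identifications the proof is mechanical. Since $E(x_n)$ has non-empty interior, Proposition \ref{fIsSemiOpen} guarantees that $f$ is semi-open, and $f$ is continuous by construction. The hypothesis of the corollary states exactly that $B=f^{-1}(U(x_n))$ is nowhere dense in $\{0,1\}^\N$. The preceding proposition then applies directly and yields that $f(B)=U(x_n)$ is nowhere dense in $\R$, which is the desired conclusion. There is essentially no obstacle beyond the two set identifications above; the genuine analytic work—using semi-openness to push the nowhere-dense set $B$ forward to a nowhere-dense image—has already been carried out in the proof of the preceding proposition, so nothing further needs to be verified here.
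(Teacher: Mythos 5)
Your proof is correct and follows exactly the route the paper intends: the corollary is an immediate instance of the preceding proposition, once one checks the identifications $f^{-1}(U(x_n))=B:=\{x\in\{0,1\}^\N:\{x\}=f^{-1}(f(x))\}$ and $f(B)=U(x_n)$, and invokes continuity of $f$ together with semi-openness from Proposition \ref{fIsSemiOpen}. The paper prints no proof, leaving these identifications implicit, and your verification of them is accurate.
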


Let $(k_0,k_1,\dots,k_m;q)$ be a multigeometric series. By $\Sigma$ we denote all subsums of $\{k_0,\dots,k_m\}$, that is 
\[
\Sigma=\{\sum_{i=0}^m\varepsilon_ik_i:\varepsilon_i=0,1\}.
\]
Then $E(k_0,k_1,\dots,k_m;q)=\{\sum_{n=0}^\infty a_nq^n: (a_n)\in\Sigma^\N\}$. By $U(\Sigma)$ we denote those points in $\Sigma$ which have unique representations. 

\begin{proposition}\label{PropSetUniqIsNWD}
Assume that $E(k_0,k_1,\dots,k_m;q)$ has a non-empty interior. If $U(\Sigma)\neq\Sigma$, then the set of uniqueness $U(k_0,k_1,\dots,k_m;q)$ is nowhere dense.  
\end{proposition}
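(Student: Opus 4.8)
The plan is to reduce the claim to a nowhere-density statement in the Cantor space and then read off the conclusion directly from the ambiguity inside $\Sigma$. Write $(x_n)$ for the underlying multigeometric sequence; its terms split into consecutive blocks, the $j$-th block being $k_0q^j,\dots,k_mq^j$, so that a sequence $(\varepsilon_n)\in\{0,1\}^\N$ assigns to each block a subset of $\{k_0,\dots,k_m\}$ and thereby a digit $a_j\in\Sigma$, with $f((\varepsilon_n))=\sum_j a_jq^j$. Since $E(k_0,\dots,k_m;q)$ has non-empty interior by hypothesis, Corollary \ref{CorNWDUniqueSet} applies, and it suffices to prove that $f^{-1}(U(k_0,\dots,k_m;q))$ is nowhere dense in $\{0,1\}^\N$.

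First I would use $U(\Sigma)\neq\Sigma$ to fix an element $s\in\Sigma$ admitting two distinct representations, i.e.\ vectors $(\varepsilon_i)_{i=0}^m\neq(\varepsilon_i')_{i=0}^m$ with entries in $\{0,1\}$ such that $s=\sum_{i=0}^m\varepsilon_ik_i=\sum_{i=0}^m\varepsilon_i'k_i$. The crux of the argument is the observation that one single block carrying the digit $s$ already kills uniqueness, irrespective of the remaining blocks; in particular there is no need to analyse carrying between blocks or the global digit ambiguity.

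Then I would take an arbitrary non-empty basic cylinder $B_{(d_1,\dots,d_N)}$, pick a block index $n_0$ large enough that the whole $n_0$-th block lies past the $N$-th coordinate, and pass to the smaller cylinder $C\subseteq B_{(d_1,\dots,d_N)}$ obtained by additionally fixing the coordinates of the $n_0$-th block so that its digit equals $s$ via the vector $(\varepsilon_i)$. Every $(\varepsilon_n)\in C$ is then non-unique: replacing the coordinates of the $n_0$-th block by $(\varepsilon_i')$ produces a sequence $(\varepsilon_n')\neq(\varepsilon_n)$ (they differ inside that block, since $(\varepsilon_i)\neq(\varepsilon_i')$) with unchanged digit $a_{n_0}=s$ and identical digits elsewhere, hence $f((\varepsilon_n'))=f((\varepsilon_n))$. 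Consequently $C\cap f^{-1}(U(k_0,\dots,k_m;q))=\emptyset$.

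As the basic cylinders form a base of $\{0,1\}^\N$, this shows that every non-empty open set contains a non-empty open subset disjoint from $f^{-1}(U(k_0,\dots,k_m;q))$, so the latter is nowhere dense, and Corollary \ref{CorNWDUniqueSet} completes the proof. I do not anticipate a genuine obstacle: the only points needing care are the bookkeeping that $n_0$ may be chosen beyond the fixed initial coordinates and the verification that $(\varepsilon_n)$ and $(\varepsilon_n')$ are truly different, both immediate from the block structure and from $(\varepsilon_i)\neq(\varepsilon_i')$. If anything is delicate it is the reduction step, namely recognising that Corollary \ref{CorNWDUniqueSet} lets us transfer nowhere-density from the Cantor space to $E$.
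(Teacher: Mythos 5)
Your proof is correct and is essentially the paper's own argument: both hinge on the observation that fixing a single block of coordinates to represent the ambiguous digit $\sigma\in\Sigma\setminus U(\Sigma)$ destroys uniqueness, and both transfer nowhere-density from $\{0,1\}^\N$ to $E(k_0,\dots,k_m;q)$ via Corollary \ref{CorNWDUniqueSet}. The only difference is cosmetic: the paper packages the estimate as $f^{-1}(U)\subseteq X$ for an explicit closed set $X$ with empty interior (sequences no block of which matches the first representation of $\sigma$), whereas you verify nowhere-density directly by refining each basic cylinder.
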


\begin{proof}
Let $(x_n)=(k_0,k_1,\dots,k_m;q)$ and $f((\varepsilon_n)_{n=1}^\infty)=\sum_{n=1}^\infty\varepsilon_nx_n$.
Let $\sigma\in\Sigma\setminus U(\Sigma)$. Then there are two distinct tuples $(\varepsilon_0',\dots,\varepsilon_m')$ and $(\varepsilon_0'',\dots,\varepsilon_m'')$ in $\{0,1\}^{m+1}$ such that $\sigma=\sum_{i=0}^m\varepsilon'_ik_i=\sum_{i=0}^m\varepsilon''_ik_i$. Consider a set $X$ given by
\[
\{(\varepsilon_i)\in\{0,1\}^\N:(\varepsilon_{p(m+1)},\varepsilon_{p(m+1)+1},\dots,\varepsilon_{p(m+1)+m})\neq (\varepsilon'_{p(m+1)},\varepsilon'_{p(m+1)+1},\dots,\varepsilon'_{p(m+1)+m})\text{ for every }p\in\N\}.
\]
Note that $X$ is closed with an empty interior. Thus $X$ is nowhere dense. Note also that $f^{-1}(U(x_n))\subseteq X$. Thus by Corollary \ref{CorNWDUniqueSet} the set of uniqueness is nowhere dense.
\end{proof}

Using a similar argument one can prove the following extension of Proposition \ref{PropSetUniqIsNWD}.
\begin{theorem}\label{MultigeometricWithNowhereDenseUniqueSet}
Assume that $E(k_0,k_1,\dots,k_m;q)$ has a non-empty interior. If $U(\Sigma+\Sigma q+\dots\Sigma q^k)\neq\Sigma+\Sigma q+\dots\Sigma q^k$, then $U(k_0,k_1,\dots,k_m;q)$ is nowhere dense.
\end{theorem}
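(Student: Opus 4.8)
The plan is to mimic the proof of Proposition \ref{PropSetUniqIsNWD}, replacing the single blocks of $m+1$ coordinates by \emph{super-blocks} of $(k+1)(m+1)$ coordinates. Write $(x_n)=(k_0,\dots,k_m;q)$ and $f((\varepsilon_n))=\sum_{n}\varepsilon_n x_n$ as before. First I would record the block structure: the digits of $(\varepsilon_n)$ split into consecutive super-blocks, the $s$-th one governing the terms $k_i q^{\,s(k+1)+j}$ with $0\le i\le m$ and $0\le j\le k$, i.e.\ the $(k+1)(m+1)$ coordinates lying in blocks $s(k+1),\dots,s(k+1)+k$. Fixing the $0$-$1$ digits of that super-block and factoring out $q^{\,s(k+1)}$ yields precisely an element of $\Sigma+\Sigma q+\dots+\Sigma q^k$ scaled by $q^{\,s(k+1)}$; hence two different digit patterns in the same super-block give values whose difference equals $q^{\,s(k+1)}$ times the difference of the corresponding elements of $\Sigma+\Sigma q+\dots+\Sigma q^k$.

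Next I would invoke the hypothesis. Since $U(\Sigma+\Sigma q+\dots+\Sigma q^k)\neq\Sigma+\Sigma q+\dots+\Sigma q^k$, some point of $\Sigma+\Sigma q+\dots+\Sigma q^k$ admits two distinct $0$-$1$ representations across one super-block; denote the two tuples $\bar\varepsilon',\bar\varepsilon''\in\{0,1\}^{(k+1)(m+1)}$, so that they have equal $q$-weighted sums. These play the role of the tuples $(\varepsilon_i'),(\varepsilon_i'')$ in Proposition \ref{PropSetUniqIsNWD}.

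Then I would set $X\subseteq\{0,1\}^\N$ to be the set of sequences whose digits, restricted to each aligned super-block, never coincide with the forbidden pattern $\bar\varepsilon'$. The set $X$ is closed, being an intersection of clopen conditions, and it has empty interior: a basic clopen set fixes only finitely many coordinates, so one may install $\bar\varepsilon'$ in some later, still-free super-block, producing a point outside $X$; thus $X$ is nowhere dense. The inclusion $f^{-1}(U(x_n))\subseteq X$ then goes exactly as before: if a sequence carries $\bar\varepsilon'$ on some super-block $s$, replacing it there by $\bar\varepsilon''$ changes the sequence but, by the equal-weighted-sum property together with the common factor $q^{\,s(k+1)}$, leaves the value of $f$ unchanged, so that value is not unique. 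Since $E(k_0,\dots,k_m;q)$ has non-empty interior, Corollary \ref{CorNWDUniqueSet} upgrades the nowhere density of $f^{-1}(U(x_n))$ to that of $U(x_n)=U(k_0,\dots,k_m;q)$.

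The step I expect to demand the most care is the bookkeeping of the super-block alignment: checking that the forbidden pattern occupies exactly one contiguous super-block of coordinates and that the swap $\bar\varepsilon'\mapsto\bar\varepsilon''$ stays confined to it, so the geometric weights of every other coordinate are untouched and the total sum is genuinely preserved. Once that is in place, the remainder is a routine transcription of Proposition \ref{PropSetUniqIsNWD} with $(m+1)$ replaced by $(k+1)(m+1)$.
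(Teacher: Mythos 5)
Your proposal is correct, and it is precisely the argument the paper intends: the paper gives no separate proof of Theorem~\ref{MultigeometricWithNowhereDenseUniqueSet}, stating only that it follows ``by a similar argument'' to Proposition~\ref{PropSetUniqIsNWD}, and your write-up is exactly that argument with the blocks of length $m+1$ replaced by aligned super-blocks of length $(k+1)(m+1)$, the same nowhere dense pattern-avoiding set $X$, and the same final appeal to Corollary~\ref{CorNWDUniqueSet}. The super-block alignment check you flag is indeed the only point of care, and your handling of it (a common factor $q^{s(k+1)}$ multiplying an element of $\Sigma+\Sigma q+\dots+\Sigma q^k$) is correct.
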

  
Now let us present a similar fact in more general settings. 

\begin{theorem}
Let $E(x_n)$ have a non-empty interior. Assume that there exists a sequence $(I_k^i)_{k\in\N, i\in\{0,1\}}$ of pairwise disjoint subsets of $\N$ such that 
\begin{itemize}
    \item $I^0_k$ is finite for every $k$;
    \item $\sum_{n\in I^0_k}x_n=\sum_{n\in I^1_k}x_n$ for every $k$.
\end{itemize}
Then $U(x_n)$ is nowhere dense in $E(x_n)$.
\end{theorem}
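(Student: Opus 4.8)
The plan is to follow the template of Proposition \ref{PropSetUniqIsNWD}: write $(x_n)$ for the given sequence, set $f((\varepsilon_n)_{n=1}^\infty)=\sum_{n=1}^\infty\varepsilon_n x_n$, and reduce everything to Corollary \ref{CorNWDUniqueSet}. Since $E(x_n)$ has non-empty interior, it suffices to produce a \emph{nowhere dense} set $X\subseteq\{0,1\}^\N$ with $f^{-1}(U(x_n))\subseteq X$; then $f^{-1}(U(x_n))$ is nowhere dense and Corollary \ref{CorNWDUniqueSet} yields that $U(x_n)$ is nowhere dense in $E(x_n)$. At the outset I would discard the degenerate indices with $I^0_k=I^1_k=\emptyset$ (for which the flip below is the identity), so that every $I^0_k,I^1_k$ is non-empty; as the family is indexed by $k\in\N$ this leaves infinitely many blocks.

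The candidate set is
\[
X=\{(\varepsilon_n)\in\{0,1\}^\N:\text{for every }k,\ \neg(\varepsilon_n=1\ \forall n\in I^0_k\ \text{ and }\ \varepsilon_n=0\ \forall n\in I^1_k)\}.
\]
First I would verify the inclusion $f^{-1}(U(x_n))\subseteq X$. If $(\varepsilon_n)\notin X$, there is a $k$ with $\varepsilon_n=1$ on $I^0_k$ and $\varepsilon_n=0$ on $I^1_k$. Define $(\varepsilon'_n)$ by flipping these two blocks: $\varepsilon'_n=0$ on $I^0_k$, $\varepsilon'_n=1$ on $I^1_k$, and $\varepsilon'_n=\varepsilon_n$ elsewhere; this is well defined precisely because the blocks are disjoint from each other and from the rest of $\N$. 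Then $f(\varepsilon')-f(\varepsilon)=\sum_{n\in I^1_k}x_n-\sum_{n\in I^0_k}x_n=0$ by the equal-sum hypothesis, while $(\varepsilon'_n)\neq(\varepsilon_n)$ since the blocks are non-empty. Thus $f(\varepsilon)$ has two distinct representations and is not unique, so $(\varepsilon_n)\notin f^{-1}(U(x_n))$, proving the inclusion.

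The remaining, and main, task is to show that $X$ is nowhere dense. It is closed, because its complement is the union over $k$ of the basic clopen conditions ``$\varepsilon=1$ on $I^0_k$ and $\varepsilon=0$ on $I^1_k$''. For empty interior I would set $S_k=I^0_k\cup I^1_k$; since the whole doubly-indexed family is pairwise disjoint, the sets $S_k$ are pairwise disjoint too, so for each $N$ at most $N$ of them meet $\{1,\dots,N\}$, and hence infinitely many $S_k$ lie in $\{N+1,N+2,\dots\}$. Given a basic cylinder $B_{(d_1,\dots,d_N)}$, choose such a $k$: inside the cylinder one is free to set $\varepsilon_n=1$ on $I^0_k$ and $\varepsilon_n=0$ on $I^1_k$ without disturbing the fixed coordinates $d_1,\dots,d_N$, obtaining a point of $B_{(d_1,\dots,d_N)}$ outside $X$. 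So $X$ contains no basic cylinder, hence has empty interior and, being closed, is nowhere dense.

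The hard part is this last counting step: one must guarantee arbitrarily late blocks $S_k$ inside every cylinder, and that is exactly where pairwise disjointness of the entire family $(I^i_k)$ is essential (the finiteness of $I^0_k$ is in fact not needed for the counting, since each $S_k$ meeting an initial segment consumes a distinct coordinate). The only other care point is the degenerate-index reduction noted above, which keeps the flip genuinely two-to-one.
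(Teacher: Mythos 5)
Your reduction to Corollary \ref{CorNWDUniqueSet} and the inclusion $f^{-1}(U(x_n))\subseteq X$ are fine, but your claim that $X$ is closed is a genuine gap, and it is exactly where the argument breaks. The hypothesis makes only $I^0_k$ finite; nothing prevents $I^1_k$ from being infinite, and in the natural examples it is (a single term, or a finite block, balanced against an infinite tail of equal sum; e.g.\ in $E(3,2;\frac14)$ one has $y_2+y_3+y_4=y_1+\sum_{k\geq 3}y_{2k-1}$). The set $C_k=\{\varepsilon:\varepsilon\equiv 1\text{ on }I^0_k \text{ and } \varepsilon\equiv 0\text{ on }I^1_k\}$ is always closed, but it is \emph{clopen} only when $I^0_k\cup I^1_k$ is finite; when $I^1_k$ is infinite, $C_k$ is closed with empty interior, i.e.\ nowhere dense. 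Then $X^c=\bigcup_k C_k$ need not be open, and in fact, if every $I^1_k$ is infinite, $X^c$ is a countable union of nowhere dense sets, so your $X$ is comeager and hence dense in $\{0,1\}^\N$ --- the opposite of nowhere dense. Your verification that $X$ has empty interior is correct, but without closedness it says nothing about $\overline{X}$, and the inclusion $f^{-1}(U(x_n))\subseteq X$ then carries no information. Your closing ``care point'' is backwards: the finiteness of $I^0_k$ is not a dispensable convenience for the counting step; it is the one hypothesis that can make the trap sets clopen, and to exploit it one must build $X$ from the sets $I^0_k$ alone.

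That is what the paper does: it takes $X:=\{(\varepsilon_n)\in\{0,1\}^\N:\forall k\,\exists n\in I^0_k\;(\varepsilon_n\neq 1)\}$. Each condition ``$\exists n\in I^0_k\;(\varepsilon_n\neq 1)$'' defines a clopen set $U_k$ precisely because $I^0_k$ is finite, so this $X=\bigcap_k U_k$ is closed; the same disjointness counting as yours gives empty interior, hence nowhere density. The price is that the required inclusion $f^{-1}(U(x_n))\subseteq X$ becomes the stronger statement that \emph{any} $\varepsilon$ which is identically $1$ on some $I^0_k$ represents a non-unique point --- stronger than your flip, which also assumed $\varepsilon\equiv 0$ on $I^1_k$ before trading $I^0_k$ for $I^1_k$. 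The paper records this inclusion as immediate; if you repair your proof by switching to the paper's $X$, you must still address that step (the block swap is no longer automatic when some coordinates in $I^1_k$ are already equal to $1$), so the difficulty you pushed into ``closedness'' reappears there and cannot simply be waved away.
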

\begin{proof}
Let $f((\varepsilon_n)_{n=1}^\infty)=\sum_{n=1}^\infty\varepsilon_nx_n$. Consider the following set
\[
X:=\{(\varepsilon_n)\in\{0,1\}^\N:\forall k\exists n\in I^0_k\;\;(\varepsilon_n\neq 1)\}.
\]
Note that $U_k:=\{(\varepsilon_n)\in\{0,1\}^\N:\exists n\in I^0_k\;\;(\varepsilon_n\neq 1)\}$ is clopen subset of $\{0,1\}^\N$. Since $X$ is an intersection of all $U_k$'s, $X$ is closed. Moreover $X$ has an empty interior, and therefore it is nowhere dense. Finally note that $f^{-1}(U(x_n))\subseteq X$. The assertion follows from Corollary \ref{CorNWDUniqueSet}. 
\end{proof}

Now, let us consider the uniqueness set of Guthrie-Nymann's Cantorval. 

\begin{theorem}\label{GNCharacterizationUnique}
Let $(a_n)\in\{0,2,3,5\}^\N$. The following conditions are equivalent 
\begin{itemize}
    \item[(1)] there is $(b_n)\in\{0,2,3,5\}^\N$ such that $(a_n)\neq (b_n)$ and
\[
\sum_{n=1}^\infty\frac{a_n}{4^n}=\sum_{n=1}^\infty\frac{b_n}{4^n}.
\]
    \item[(2)] $\Big\{n\in\N:(a_n,a_{n+1})\in\{(2,3),(3,2), (3,0),(2,5)\}\Big\}$ is finite non-empty.
\end{itemize}
\begin{proof}
"$\Rightarrow$". By \cite{BPW} there exists $n_0<n_1<n_2<\ldots$ such that
\begin{itemize}
    \item $n_0=\min\{n: a_n\neq b_n\}$ and $a_{n_0}=2$, $b_{n_0}=3$ (either vice-versa and the notations of $(a_n)$ and $(b_n)$ change with each other in the next points) 
    \item $a_{n_k}=5$, $b_{n_{k}}=0$ for odd $k$
    \item $a_{n_k}=0$, $b_{n_{k}}=5$ for even $k>0$
    \item $a_i\in\{3,5\}$, $a_i-b_i=3$ for every $n_{2k}<i<n_{2k+1}$
     \item $a_i\in\{0,2\}$, $b_i-a_i=3$ for each $n_{2k+1}<i<n_{2k+2}$
\end{itemize}
\begin{center}
$(a_n)$  \ \ $2$ $5 \ 5\ \ldots \ 5 \atop 3\ 3\ \ldots \ 3$ $5$ $2 \ 2\ \ldots \ 2 \atop 0\ 0\ \ldots \ 0$ $0$ $5 \ 5\ \ldots \ 5 \atop 3\ 3\ \ldots \ 3$ $5$ $\ldots$
\end{center}
\begin{center}
$(b_n)$  \ \ $3$ $2 \ 2\ \ldots \ 2 \atop 0\ 0\ \ldots \ 0$ $0$ $5 \ 5\ \ldots \ 5 \atop 3\ 3\ \ldots \ 3$ $5$ $2 \ 2\ \ldots \ 2 \atop 0\ 0\ \ldots \ 0$ $0$ $\ldots$
\end{center}
Suppose that $a_m=2$ for some $m>n_0$. Then there exists $k$ such that $n_{2k+1}<m<n_{2k+2}$. If $m+1<n_{2k+2}$, then $a_{m+1}=0$ either $a_{m+1}=2$. Otherwise $m+1=n_{2k+2}$ and we have $a_{m+1}=0$. It means that $\Big\{n\in\N:(a_n,a_{n+1})\in\{(2,3),(2,5)\}\Big\}\cap (n_0,\infty)=\emptyset$. In a very similar way we obtain that $\Big\{n\in\N:(a_n,a_{n+1})\in\{(3,2),(3,0)\}\Big\}\cap (n_0,\infty)=\emptyset$. Hence we have already shown that the set  $\Big\{n\in\N:(a_n,a_{n+1})\in\{(2,3),(3,2), (3,0),(2,5)\}\Big\}$ is finite. Now we prove that it is non-empty.

If $a_{n_0}=2$ we have two possibilities. If $n_0+1<n_1$, then $a_{n_0+1}=3$ either $a_{n_0+1}=5$. On the other hand, when $n_0+1=n_1$, we get $a_{n_0+1}=5$. Thus $(a_{n_0},a_{n_0+1})\in\{(2,3),(2,5)\}$. 

Now, assume that $a_{n_0}=3$. Then in a similar way we obtain that $a_{n_0+1}$ is equal to $0$ either $2$. Finally we get that $\Big\{n\in\N:(a_n,a_{n+1})\in\{(2,3),(3,2), (3,0),(2,5)\}\Big\}$ contains $n_0$, that is the first index which differs the sequences $(a_n)$ and $(b_n)$.
\\"$\Leftarrow$". Let $n_0=\max\{n\in\mathbb{N} : (a_{n},a_{n+1})\in \{(2,3),(3,2),(3,0),(2,5)\}\}$. Let $B\subset\mathbb{N}$ be defined as $B=\{n>n_0 : (a_{n},a_{n+1})\in \{(5,0),(5,2),(0,3),(0,5)\}\}=\{n_1<n_2<\ldots\}$, which may be finite or infinite. Without loss of generality let $a_{n_1}=5$. Then $a_{n_1+1}=0$ either $a_{n_1+1}=2$. By assumption if $a_k=2$ for some $k>n_1$, then $a_{k+1}=0$ either $a_{k+1}=2$ and thus $k\notin B$. If $a_k=0$ and $a_{k+1}=0$, then also $k\notin B$, while $a_k=0$ and $a_{k+1}=3$ either $a_{k+1}=5$ implies that $k\in B$. Hence $a_{n_2}=0$. In the same way we prove that $a_{n_{2k-1}}=5$ and  $a_{n_{2k}}=0$ for each $k\in\mathbb{N}$. Furthermore $a_{i}\in\{0,2\}$ for every $n_{2k+1}<i<n_{2k+2}$ and $a_{i}\in\{3,5\}$ for every $n_{2k}<i<n_{2k+1}$. Define $b_i=a_i$ for each $i\leq n_0$, $b_{n_{2k-1}}=0$ and $b_{n_{2k}}=5$ for every $k\in\mathbb{N}$. Moreover let $a_i-b_i=3$ for every $n_{2k}<i<n_{2k+1}$ and $b_i-a_i=3$ for each $n_{2k+1}<i<n_{2k+2}$. Thus we have $\sum_{n=1}^{\infty}\frac{a_n}{4^n}=\sum_{n=1}^{\infty}\frac{b_n}{4^n}$.

\end{proof}
\end{theorem}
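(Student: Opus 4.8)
The plan is to build entirely on the explicit description of pairs of representations from \cite{BPW}, recalled above, rather than re-deriving it: that result tells us precisely what a competing representation $(b_n)$ of the same point must look like, encoded by a switch sequence $n_0<n_1<\dots$ with its six structural properties. The content of this theorem is to translate those global ``switching'' data into a purely local condition on consecutive digit pairs of $(a_n)$ alone, so both implications reduce to reading off, and then rebuilding, the block pattern.

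For $(1)\Rightarrow(2)$ I would start from a second representation $(b_n)$, apply \cite{BPW} to obtain $n_0<n_1<\dots$, and, after possibly interchanging the names of $(a_n)$ and $(b_n)$, assume $a_{n_0}=2$. The key observation is that beyond $n_0$ the sequence $(a_n)$ splits into alternating blocks: on $(n_{2k},n_{2k+1})$ all digits lie in $\{3,5\}$ with a $5$ at the right endpoint, while on $(n_{2k+1},n_{2k+2})$ all digits lie in $\{0,2\}$ with a $0$ at the right endpoint. Hence a digit $2$ at some $m>n_0$ is interior to a low block, forcing $a_{m+1}\in\{0,2\}$ and excluding the pairs $(2,3),(2,5)$; symmetrically a $3$ at $m>n_0$ forces $a_{m+1}\in\{3,5\}$, excluding $(3,2),(3,0)$. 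So no distinguished pair occurs past $n_0$, giving finiteness, and non-emptiness is immediate since the transition at $n_0$ itself realizes such a pair: $a_{n_0}=2$ yields $(2,3)$ or $(2,5)$, and $a_{n_0}=3$ yields $(3,0)$ or $(3,2)$.

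For the converse $(2)\Rightarrow(1)$ I would set $n_0=\max\{n:(a_n,a_{n+1})\in\{(2,3),(3,2),(3,0),(2,5)\}\}$, which exists by finiteness and non-emptiness, and reconstruct the block pattern from the local data. Maximality of $n_0$ means that for every $n>n_0$ a $2$ is followed by a digit in $\{0,2\}$ and a $3$ by a digit in $\{3,5\}$; since one can leave a run of ``low'' digits $\{0,2\}$ only through a $0$ and a run of ``high'' digits $\{3,5\}$ only through a $5$, the tail of $(a_n)$ is forced into alternating low/high blocks whose transitions are recorded exactly by $B=\{n>n_0:(a_n,a_{n+1})\in\{(5,0),(5,2),(0,3),(0,5)\}\}=\{n_1<n_2<\dots\}$. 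After checking that these transitions alternate correctly, so that (taking $a_{n_0}=2$ without loss of generality) $a_{n_{2k-1}}=5$ and $a_{n_{2k}}=0$ with interior digits in the expected sets, I would define $(b_n)$ by copying $a$ up to $n_0$, setting $b_{n_0}=3$, swapping $0\leftrightarrow 5$ at the transition indices, and subtracting or adding $3$ inside the high/low blocks. It then remains to verify $\sum_{n=1}^\infty\frac{a_n}{4^n}=\sum_{n=1}^\infty\frac{b_n}{4^n}$, which follows from the telescoping identity keeping the partial-sum difference equal to $\pm\frac{1}{4^k}$ and tending to $0$, exactly as in the Guthrie--Nymann computation recalled earlier.

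I expect the main obstacle to be the structural bookkeeping in the converse direction: turning the single local statement ``no distinguished pair occurs after $n_0$'' into the global alternating block decomposition, confirming that the transition indices in $B$ genuinely alternate between $5$'s and $0$'s (ruling out two consecutive transitions of the same type), and checking that the reconstructed $(b_n)$ stays inside $\{0,2,3,5\}^\N$ while producing the same sum, including the degenerate case $B=\emptyset$ where a single infinite block follows $n_0$. The forward direction and the final geometric-series verification are comparatively routine once the block picture is in place.
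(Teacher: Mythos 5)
Your proposal is correct and takes essentially the same route as the paper: the forward direction reads finiteness and non-emptiness off the block structure of the \cite{BPW} characterization exactly as the paper does, and your converse (taking $n_0$ to be the last distinguished pair, extracting the transition set $B=\{n>n_0:(a_n,a_{n+1})\in\{(5,0),(5,2),(0,3),(0,5)\}\}$, and rebuilding $(b_n)$ via the $2\leftrightarrow 3$ swap at $n_0$, the $0\leftrightarrow 5$ swaps at transitions, and the $\pm 3$ shifts inside blocks) is precisely the paper's construction. As a bonus, your prescription $b_{n_0}=3$ with $b_i=a_i$ only for $i<n_0$ is the right one: the paper's text literally sets $b_i=a_i$ for all $i\le n_0$, which would leave the two sums differing by $4^{-n_0}$, so your version repairs that slip.
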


The values in the following diagram are the possibilities for $a_n$ for $n>n_0$. The arrows shows what number can be the next term $a_{n+1}$. There are lack of four arrows describing $(2,3),(3,2),(3,0),(2,5)$ as the possible values for $(a_n,a_{n+1})$. The $(n_k)$ indexes are the moments when we move from the left part of the diagram to the right and vice-versa.
\begin{center}
\begin{tikzpicture}[node distance={15mm}, thick, main/.style = {draw, circle}] 
\node[main] (1) {$2$}; 
\node[main] (2) [below of=1] {$0$}; 
\node[main] (3) [right of=2] {$5$}; 
\node[main] (4) [above of=3] {$3$}; 
\draw[->] (1) -- (2); 
\draw[->] (2) -- (1); 
\draw[->] (2) -- (3); 
\draw[->] (2) -- (4); 
\draw[->] (3) -- (1);
\draw[->] (3) -- (2);
\draw[->] (3) -- (4);
\draw[->] (4) -- (3);
\draw[->] (1) to [out=90,in=180,looseness=5] (1); 
\draw[->] (2) to [out=180,in=270,looseness=5] (2); 
\draw[->] (3) to [out=270,in=0,looseness=5] (3); 
\draw[->] (4) to [out=0,in=90,looseness=5] (4); 
\end{tikzpicture} 
\end{center}

\begin{theorem}\label{SetOfUniqGutrieNymann}
$U(3,2;\frac{1}{4})$ is comeager in $E(3,2;\frac{1}{4})$.
\end{theorem}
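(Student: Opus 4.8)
The plan is to pass to the base-$4$ coding $E(3,2;\frac14)=\{\sum_{n=1}^\infty a_n4^{-n}:(a_n)\in\Sigma^\N\}$ with $\Sigma=\{0,2,3,5\}$, to show that in the coding space the preimage of $U:=U(3,2;\frac14)$ is comeager, and then to transfer this to $E$ using that the coding map is continuous and semi-open. Since the four subsums of $\{3,2\}$ are pairwise distinct (they are $0,2,3,5$), the digit correspondence identifies $\{0,1\}^\N$ with $\Sigma^\N$ homeomorphically, and the map $f((\varepsilon_n))=\sum\varepsilon_nx_n$ becomes $g((a_n))=\sum_{n=1}^\infty a_n4^{-n}$; so it suffices to argue with $g$ on $\Sigma^\N$.

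First I would isolate a comeager set inside $g^{-1}(U)$. Call a pair $(a_n,a_{n+1})$ a \emph{switch} if it lies in $\{(2,3),(3,2),(3,0),(2,5)\}$, and set
\[
A=\{(a_n)\in\Sigma^\N:\ (a_n,a_{n+1})\text{ is a switch for infinitely many }n\}.
\]
By Theorem \ref{GNCharacterizationUnique}, a sequence $(a_n)$ represents a non-unique point exactly when its set of switch positions is finite and non-empty; hence every $(a_n)\in A$ represents a unique point, so $A\subseteq g^{-1}(U)$. Writing $A=\bigcap_N O_N$ with $O_N=\{(a_n):\exists j>N\ (a_j,a_{j+1})\text{ is a switch}\}$, each $O_N$ is open, and it is dense because any cylinder can be prolonged by inserting the block $3,0$ beyond stage $N$, which creates a switch. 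Thus $A$ is a dense $G_\delta$, hence comeager, and therefore $g^{-1}(U)$ is comeager in $\Sigma^\N$; equivalently $f^{-1}(U)$ is comeager in $\{0,1\}^\N$.

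It remains to transfer comeagerness from $f^{-1}(U)$ to $U$, and this is the step that really uses the geometry. I would argue by contradiction, in the spirit of the proof of Theorem \ref{UisMeagerOrComeager}. Since $U$ is co-analytic it has the Baire property, so if $U$ were not comeager in $E$ then $E\setminus U$ would be non-meager with the Baire property, hence comeager in some non-empty open $V\subseteq E$; equivalently $U\cap V$ is meager in $E$. As $E$ contains an interval, Corollary \ref{CorollaryMeagerMeager} lets me regard $U\cap V$ as meager in $\R$, so $U\cap V=\bigcup_n A_n$ with each $A_n$ nowhere dense in $\R$. Proposition \ref{fIsSemiOpen} gives that $f$ is semi-open, so Proposition \ref{PropositionSemiOpen} yields that each $f^{-1}(A_n)$ is nowhere dense; consequently $f^{-1}(U)\cap f^{-1}(V)=f^{-1}(U\cap V)$ is meager. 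But $f^{-1}(V)$ is a non-empty open subset of $\{0,1\}^\N$, while $f^{-1}(U)$ is comeager in the whole space and thus comeager in $f^{-1}(V)$, a contradiction. Therefore $U$ is comeager in $E$.

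The main obstacle is exactly this final transfer: membership in $U$ is not preserved under changing finitely many coordinates (the unique point $5/4$ becomes the $2$-point $3/4$), so $U$ is not Fin-invariant and Corollary \ref{MeagerComeagerCorollary} cannot be applied to it directly. The way around this is to replace the zero--one law by the semi-open pullback argument above, whose only genuinely Cantorval-specific ingredients are that $f$ is semi-open and that meagerness in $E$ coincides with $\R$-meagerness; the combinatorial content is carried entirely by the dense $G_\delta$ set $A$ supplied by Theorem \ref{GNCharacterizationUnique}.
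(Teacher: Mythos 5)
Your proof is correct, and its first half coincides with the paper's: both arguments extract from Theorem \ref{GNCharacterizationUnique} the same comeager set of codes having switches at infinitely many positions (the paper writes it as the union $B=B_{(2,3)}\cup B_{(3,2)}\cup B_{(3,0)}\cup B_{(2,5)}$ of subsets of $\{0,1\}^\N$ and merely asserts each piece is comeager; you exhibit the corresponding set $A\subseteq\Sigma^\N$ explicitly as a dense $G_\delta$, which is a small plus, and by pigeonhole your $A$ equals the paper's $B$ under the coding homeomorphism). The two arguments genuinely diverge in the transfer step. The paper pushes the comeager set \emph{forward}: it notes that $f(B)$ is Fin-invariant and analytic, applies the dichotomy of Corollary \ref{MeagerComeagerCorollary} to $f(B)$, and excludes the meager horn by pulling back through Propositions \ref{fIsSemiOpen} and \ref{PropositionSemiOpen}, since $f^{-1}(f(B))\supseteq B$ cannot be meager; the delicate point there, which the paper states without comment, is that Fin-invariance of the image $f(B)$ holds because every point of $f(B)$ has a unique representation, and that representation lies in $B$. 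You never form $f(B)$: instead you use that $U$ itself is co-analytic and hence has the Baire property, localize a hypothetical failure of comeagerness to an open set $V\subseteq E$ on which $E\setminus U$ is comeager, and then Corollary \ref{CorollaryMeagerMeager} together with the same two semi-openness propositions makes $f^{-1}(U\cap V)$ meager, contradicting the comeagerness of $f^{-1}(U)$ on the non-empty open set $f^{-1}(V)$. This is in effect the localization argument from the paper's proof of Theorem \ref{UisMeagerOrComeager} re-run inline. What each route buys: yours avoids the Fin-invariance machinery altogether --- and your observation that $U$ itself is \emph{not} Fin-invariant (e.g.\ $\frac{5}{4}\in U$ while $\frac{3}{4}\notin U$) correctly identifies why some such workaround is unavoidable --- whereas the paper's route is shorter given the zero-one law it has already established, at the price of the unremarked subtlety about $f(B)$.
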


\begin{proof}
Recall that the elements of $E(3,2;\frac{1}{4})$ can be represented either as $\sum_{n=1}^\infty\varepsilon_nx_n$, where $\varepsilon_n=0,1$, $x_{2n-1}=\frac{2}{4^{n-1}}$ and $x_{2n}=\frac{3}{4^{n-1}}$, or as $\sum_{n=1}^\infty \frac{a_n}{4^n}$, where $a_n=0,2,3,5$. Note that the set 
\[
B_{(2,3)}=\{(\varepsilon_n)\in\{0,1\}^\N:\varepsilon_{2n-1}=0,\varepsilon_{2n}=1, \varepsilon_{2n+1}=1, \varepsilon_{2n+2}=0\text{ for infinitely many $n$'s}\}
\]
represents the same elements as the set of all sequences $(a_n)$ with $(a_n,a_{n+1})=(2,3)$ for infinitely many $n$'s. Using this observation we define the following sets
\[
B_{(3,2)}=\{(\varepsilon_n)\in\{0,1\}^\N:\varepsilon_{2n-1}=1,\varepsilon_{2n}=0, \varepsilon_{2n+1}=0, \varepsilon_{2n+2}=1\text{ for infinitely many $n$'s}\}
\]
\[
B_{(3,0)}=\{(\varepsilon_n)\in\{0,1\}^\N:\varepsilon_{2n-1}=1,\varepsilon_{2n}=0, \varepsilon_{2n+1}=0, \varepsilon_{2n+2}=0\text{ for infinitely many $n$'s}\}
\]
\[
B_{(2,5)}=\{(\varepsilon_n)\in\{0,1\}^\N:\varepsilon_{2n-1}=0,\varepsilon_{2n}=1, \varepsilon_{2n+1}=1, \varepsilon_{2n+2}=1\text{ for infinitely many $n$'s}\}.
\]
By $B$ we denote the union of those four sets $B_{(2,3)}, B_{(3,2)}, B_{(3,0)}$ and $B_{(2,5)}$. Note that each of those sets is comeager and Fin-invariant, and so is $B$. Note that $f(B)$ is Fin-invariant as well, where $f((\varepsilon_n)_{n=1}^\infty)=\sum_{n=1}^\infty\varepsilon_nx_n$ as usual. By Theorem \ref{GNCharacterizationUnique} we obtain that $f(B)\subset U(3,2;\frac{1}{4})$. Since $f(B)$ is analytic, it has the Baire Property. By Corollary \ref{MeagerComeagerCorollary} the set $f(B)$ is either meager or comeager. Since $B\subset f^{-1}(f(B))$, then by Proposition \ref{PropositionSemiOpen} and Proposition \ref{fIsSemiOpen}, $f(B)$ is not meager. Thus $U$ as a superset of comeager set $f(B)$ is comeager as well. 
\end{proof}

The Guthrie-Nymann's Cantorval is an achievement set given by $E(2,3;\frac{1}{4})$. So $\Sigma=\{2,3\}$ and $q=\frac{1}{4}$. By Theorem \ref{MultigeometricWithNowhereDenseUniqueSet} and Theorem \ref{SetOfUniqGutrieNymann} we obtain that $U(\Sigma+\Sigma q+\dots\Sigma q^k)=\Sigma+\Sigma q+\dots\Sigma q^k$ for every $k\in\N$.

\section{Cantorvals with  continuum points}
It is not hard to construct $E(x_n)$ with point which has $\mathfrak{c}$ - many expansions. One method was described in \cite{cardfun}: we start with any interval filling sequence $(y_n)$ and simply repeat two times any of its terms, that is $x_{2n-1}=x_{2n}=y_n$.
It is both a universal method of transforming any interval filling sequence into a locker and a construction of $E(x_n)$, which is an interval with all interiors points obtained for $\mathfrak{c}$ - many representations. 

There exist many Cantorvals with $\mathfrak{c}$-points. To see it let us consider a multigeometric series $E(k_1,\ldots,k_m; q)$ such that there exists $x\in\Sigma$ with at least two representations $x=\sum_{i\in A}k_i=\sum_{i\in B}k_i$ for $A,B\subset \{1,\ldots,m\}$, $A\neq B$. Thus $y=\frac{x}{1-q}=\sum_{n\in C}x_n$, where $C$ can be any element of the family $\{\bigcup_{n\in\mathbb{N}_0}C_n :C_n=A+m\cdot n \ \text{or} \ C_n=B+m\cdot n \ \text{for all} \ n\in\mathbb{N}_0\}$, i.e. $y$ is a $\mathfrak{c}$ - point. Moreover, note that we can find more $\mathfrak{c}$ - points: if  $z=\sum_{n\in D}x_n$, for some $D\in\mathcal{D}=\{\bigcup_{n\in\mathbb{N}_0}D_n :D_n=A+m\cdot n \ \text{or} \ D_n=B+m\cdot n \ \text{or} \ D_n=\emptyset \ \text{for all} \ n\in\mathbb{N}_0 \ \text{and} \ \vert\{n\in\mathbb{N}_0 : D_n\neq\emptyset\}\vert=\infty\}$, then it has $\mathfrak{c}$ - many more expansions in $\mathcal{D}$, so there are $\mathfrak{c}$ many $\mathfrak{c}$ - points. Note that in \cite{cardfun} the authors proved that if there exists $\mathfrak{c}$ - point, then the set of all such points is dense in $E(x_n)$.  
\begin{example}
Let us consider a Guthrie-Nymann-Jones sequence $E(3,2,2,2;q)$. By \cite{BBFS} we know that it is a Cantorval for any $q\in(\frac{1}{6},\frac{2}{11})$. Then $\Sigma=\{0,2,3,4,5,6,7,9\}$ and $2$ has three representations in $\Sigma$. Hence $\frac{2}{1-q}$ is $\mathfrak{c}$ - point.
\end{example}
The problem can be considered in another way. For an achievement set $E(y_n)$ to construct its superset $E(x_n)$ in the way $(x_n)\supset (y_n)$ and $E(x_n)$ contains an $\mathfrak{c}$ - point. Moreover we want $E(x_n)$ to be homeomorphic to $E(y_n)$. If $E(y_n)$ is an interval we do not have to worry about its interval-structure. The problem appears when $E(y_n)$ is a Cantroval. Namely when we add too much terms, we may lose the gaps and obtain an interval filling sequence $(x_n)$. It is illustrated in the following Example. 
\begin{example}
Let $E(y_n)$ be a Guthrie-Nymann's Cantorval, that is $y_{2n-1}=\frac{3}{4^n}$, $y_{2n}=\frac{2}{4^n}$ for $n\in\mathbb{N}$. Let us consider the sequence $x_{2n-1}=x_{2n}=y_n$ for each $n\in\mathbb{N}$. Thus $(x_n)$ is interval filling, so $E(x_n)=[0,\frac{10}{3}]$.  
\end{example}
Let us now improve the above construction. 
\begin{example}\label{powiekszonyGN}
Let $E(y_n)$ be a Guthrie-Nymann's Cantorval, that is $y_{2n-1}=\frac{3}{4^n}$, $y_{2n}=\frac{2}{4^n}$ for $n\in\mathbb{N}$. Let us consider the sequence $x_{5n-4}=y_{4n-3}$, $x_{5n-3}=x_{5n-2}=y_{4n-2}$, $x_{5n-1}=y_{4n-1}$, $x_{5n}=y_{4n}$ for each $n\in\mathbb{N}$, that is we repeats two times the elements $y_{4n-2}$ for all $n$. Note that $$x_{5n-2}=y_{4n-2}=\frac{2}{4\cdot 16^{n-1}}>\frac{5}{12\cdot 16^{n-1}}+\frac{1}{30\cdot 16^{n-1}}=\sum_{k>4n-2}y_k+\sum_{k=n}^{\infty}y_{4k+2}=\sum_{k>5n-2}x_k$$ 
Hence $(\sum_{k>5m-2}x_k,x_{5m-2})$ is a gap in $E(x_n)$ for every $m\in\mathbb{N}$. By inclusion $E(x_n)\supset E(y_n)$, we obtain that $E(x_n)$ is a Cantorval. Finally, note that the sum of any infinite subseries of the series $\sum_{k=1}^{\infty}x_{5k-2}$ is a $\mathfrak{c}$ - point. In fact, there are $\mathfrak{c}$ many $\mathfrak{c}$ - points.
\end{example}
Now let us apply the idea staying behind Example \ref{powiekszonyGN} for any Cantorval.
\begin{theorem}\label{wiekszycantorval}
Let $E(y_n)$ be a Cantorval. Then there exists a Cantorval $E(x_n)\supset E(y_n)$ which has $\mathfrak{c}$ - point.
\begin{proof}
Let $(a_n,b_n)$ be the sequence of the longest gaps from the left in $E(y_n)$, where $(b_n)$ is decreasing. Note that by the Third Gap Lemma $(b_n)\subset (y_n)$. Let $n_1$ be a natural number such that $\sum_{n=n_1}b_n<b_1-a_1$.  We continue the construction by induction and obtain a sequence $(n_k)$ such that $\sum_{n=n_{k+1}}^{\infty}b_n<b_{n_k}-a_{n_k}$ for every $k\in\mathbb{N}$. Let $(x_n)=(y_n)\cup (b_{n_k})$. Hence $E(x_n)$ has still the gaps with the end $b_{n_k}$ for each $k\in\mathbb{N}$, so it is a Cantorval. Note that for each $k\in\mathbb{N}$ the term $b_{n_k}$ appears at least twice in the sequence $(x_n)$. Hence we obtain that sum of any infinite subseries of $\sum_{k=1}^{\infty}b_{n_k}$ is $\mathfrak{c}$ - point.
\end{proof}
\end{theorem}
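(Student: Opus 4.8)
The plan is to enlarge $(y_n)$ by inserting a second copy of a suitably sparse subsequence of its own terms, so that on one hand a whole continuum of choices becomes available at a single point (producing a $\mathfrak{c}$-point), while on the other hand the gaps responsible for the Cantorval structure are only slightly perturbed and therefore survive. The guiding principle, already visible in Example~\ref{powiekszonyGN}, is that duplicating a term makes its two copies interchangeable, and duplicating infinitely many terms lets us make independent binary choices along an infinite set of coordinates.

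First I would isolate the geometry of the gaps. Let $(a_n,b_n)$ enumerate the longest-from-the-left gaps of $E(y_n)$ with $(b_n)$ decreasing; by the Third Gap Lemma each right endpoint $b_n$ is a term of $(y_n)$, say $b_n=y_{j_n}$, with matching left endpoint $a_n=r_{j_n}=\sum_{m>j_n}y_m$, so that the $n$-th gap has positive length $b_n-a_n$. Since $(y_n)$ is summable, so is the subsequence $(b_n)$, and I can fix indices $n_1<n_2<\dots$ recursively by requiring $\sum_{n\geq n_{k+1}}b_n<b_{n_k}-a_{n_k}$ for every $k$; this is possible because the tails of a convergent series tend to $0$. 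I then set $(x_n)=(y_n)\cup(b_{n_k})_{k}$, adding one extra copy of each value $b_{n_k}$, and rearrange into a nonincreasing sequence, which (as noted in the Preliminaries) changes neither $E$ nor the cardinal function.

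The delicate point, and the step I expect to be the main obstacle, is to check that the gap ending at each $b_{n_k}$ is not destroyed by the newly added mass. Here I would localize using the elementary consequence of Theorem~\ref{Kakeya} that in a nonincreasing positive summable sequence a term strictly larger than its own tail produces a gap of $E$ just below it. Taking the last occurrence of the value $b_{n_k}$ in $(x_n)$, its tail consists of the original $y$-terms below $b_{n_k}$, whose sum is at most $a_{n_k}$, together with the added copies $b_{n_l}$ with $l>k$, whose sum is at most $\sum_{n\geq n_{k+1}}b_n$. By the defining inequality this total is strictly below $a_{n_k}+(b_{n_k}-a_{n_k})=b_{n_k}$, so the term still exceeds its tail and a genuine gap survives below $b_{n_k}$. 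As the $b_{n_k}$ are distinct, this yields infinitely many gaps; since $E(x_n)\supseteq E(y_n)$ already has nonempty interior, Theorem~\ref{GNCantorval} forces $E(x_n)$ to be a Cantorval rather than a Cantor set or a finite union of intervals.

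It remains to exhibit the $\mathfrak{c}$-point, which is then immediate. Each value $b_{n_k}$ occurs in (at least) two positions of $(x_n)$, so for any infinite $K\subseteq\N$ the number $s=\sum_{k\in K}b_{n_k}$ can be realized by choosing, independently for each $k\in K$, which of the two copies to switch on. These choices produce $\mathfrak{c}$ pairwise distinct $0$–$1$ sequences with the same sum $s$, so $s$ is a $\mathfrak{c}$-point, completing the proof.
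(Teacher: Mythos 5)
Your proposal is correct and takes essentially the same route as the paper's own proof: you duplicate the right endpoints $b_{n_k}$ of a sparse subsequence of the longest gaps chosen so that $\sum_{n\geq n_{k+1}}b_n<b_{n_k}-a_{n_k}$, argue that these gaps survive, and obtain the $\mathfrak{c}$-point from sums of infinite subseries of $\sum_k b_{n_k}$. The only difference is that you spell out details the paper leaves implicit, namely the term-exceeds-tail verification that the gap below each $b_{n_k}$ persists and the appeal to the trichotomy of Theorem \ref{GNCantorval} to conclude that $E(x_n)$ is a Cantorval.
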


On the other hand there exists Cantorval $E(y_n)$ with $\mathfrak{c}$ points with the property that for every $(x_n)\subset (y_n)$ such that $E(x_n)$ has no $\mathfrak{c}$-points, then $E(x_n)$ is not a Cantorval.

\begin{example}
Let $E(y_n)=E(3,2,2;\frac{1}{6})$. Let $(x_n)\subset (y_n)$ be such that $E(x_n)$ has no $\mathfrak{c}$ points. Then almost all of the terms $(\frac{2}{6^n})$ are in $(y_n)\setminus (x_n)$. Since $E(3,2;\frac{1}{6})$ is a Cantor set, by Proposition \ref{skonczonezmiany} we obtain that $E(x_n)$ is also a Cantor set if $(x_n)$ is infinite either it is a finite set.
\end{example}

\section{Minimal representation and unique points}
As we have proved  in Theorem \ref{wiekszycantorval} by adding terms we may generate $\mathfrak{c}$ - points without changing Cantorval structure. On the other hand now we show it strongly decreases the set $U(x_n)$. 

\begin{theorem}
Let $E(y_n)$ be a Cantorval or a finite sum of closed intervals and $B$ be the sum of the set of all left edges of intervals contained in $E(y_n)$ and its trivial components. Let $(b_n)$ be any sequence tending to $0$. Then $U(x_n)\cap E(y_n)\subset B$, where $(x_n)=(y_n)\cup (b_n)$.  Moreover each point $x\in E(y_n)\setminus B$ has $\mathfrak{c}$ - many expansions.  
\begin{proof}
Fix $x\in E(y_n)\setminus B$. Then there exists $b_k$ such that $x-b_k\in E(y_n)$. Since $x=x-b_k+b_k$, we get another expansion of $x$. To prove the 'moreover' part it is enough to observe that instead of a single element $b_k$ above, we may take the tails of the quickly convergent subseries $\sum_{k=1}^{\infty} b_{n_k}$  of the series $\sum_{n=1}^{\infty} b_{n}$. 
\end{proof}
\end{theorem}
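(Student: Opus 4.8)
The plan is to prove the \emph{moreover} assertion first, since the inclusion $U(x_n)\cap E(y_n)\subseteq B$ is precisely its contrapositive: once we know that every $x\in E(y_n)\setminus B$ admits at least two (in fact $\mathfrak c$) distinct expansions in $(x_n)$, such an $x$ cannot lie in $U(x_n)$, so $U(x_n)\cap E(y_n)$ is forced into $B$. Throughout I would keep the standing convention of the paper that all terms are positive, and interpret ``intervals contained in $E(y_n)$'' as the (maximal) nondegenerate connected components, so that $B$ is the set of their left endpoints together with all singleton (trivial) components.

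First I would pin down the set $E(y_n)\setminus B$. As $E(y_n)$ is a Cantorval or a finite union of closed intervals, each component is either a nondegenerate interval $[a,c]$ or a single point, and $B$ removes exactly the left endpoints $a$ and the singletons. Hence
\[
E(y_n)\setminus B=\bigcup_{[a,c]}(a,c],
\]
the union ranging over the nondegenerate components. The point of this reformulation is that every $x\in E(y_n)\setminus B$ lies in some component $[a,c]$ with $x>a$, so that $d:=x-a>0$ and $x-t\in(a,c]\subseteq E(y_n)$ for every $t$ with $0\le t<d$.

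Now I would fix such an $x$ and use $b_n\to 0$ to extract a subsequence $(b_{n_k})$ with $\sum_k b_{n_k}<d$ (one may in addition take it quickly convergent, as in the author's remark, though sparsity alone already suffices). For each $S\subseteq\N$ set $s_S=\sum_{k\in S}b_{n_k}$, so that $0\le s_S<d$ and therefore $x-s_S\in(a,c]\subseteq E(y_n)$; writing $x-s_S=\sum_n\varepsilon_n^{S}y_n$ and restoring the chosen $b$-terms yields
\[
x=\sum_{k\in S}b_{n_k}+\sum_n\varepsilon_n^{S}y_n,
\]
a genuine expansion of $x$ in $(x_n)=(y_n)\cup(b_n)$. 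Distinct $S$ use distinct sets of $b$-indices, and since in $(x_n)$ the positions occupied by the $b$-terms are disjoint from those occupied by the $y$-terms, distinct $S$ give genuinely distinct expansions; this exhibits $2^{\aleph_0}=\mathfrak c$ of them. Taking a single nonempty $S$ (equivalently one small $b_k$ with $x-b_k\in E(y_n)$) already gives a second expansion besides the $(y_n)$-only one, so $x\notin U(x_n)$ and the inclusion follows.

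The single step I expect to be the main obstacle is the structural claim that $x-s_S$ returns to $E(y_n)$, since everything else (selecting a summable subsequence from a null sequence, and the bijection between subsets $S$ and expansions) is routine. This is exactly what dictates the definition of $B$: if $x$ were a left endpoint or a trivial component, subtracting any positive quantity would leave $E(y_n)$ and no second expansion would be available, so deleting precisely these points is both necessary and sufficient. Thus the whole argument reduces to the elementary geometric observation that the points of $E(y_n)\setminus B$ sit strictly to the right of a component's left endpoint, where small leftward perturbations stay inside $E(y_n)$.
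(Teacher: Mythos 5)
Your proof is correct and follows essentially the same route as the paper's: since $x$ lies strictly to the right of the left endpoint of its nondegenerate component, subtracting any sufficiently small subsum of the $b$-terms stays inside $E(y_n)$, and distinct sets of $b$-indices yield distinct expansions, giving $\mathfrak{c}$ many. The only difference is cosmetic: the paper invokes a quickly convergent subseries $\sum_k b_{n_k}$, while you correctly note that any subsequence with total sum less than $x-a$ suffices, because distinctness of expansions is already witnessed by the index sets in the merged sequence rather than by the values of the subsums.
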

In particular $U(x_n)$ is not dense in $E(x_n)$. The above result inspires to introduce the following property of sequence.
\begin{definition}
A sequence $(x_n)$ is a minimal representation of a Cantorval (or finite sum of closed intervals) $E(x_n)$ iff after removing any its infinite subsequence the achievement set $E(y_n)$  for the remaining sequence $(y_n)\subset (x_n)$ is not a Cantorval (a finite sum of closed intervals, respectively). 
\end{definition}
\begin{corollary}
If $U(x_n)$ is dense in $E(x_n)$, then $(x_n)$ is minimal representation.
\end{corollary}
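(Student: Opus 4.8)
The plan is to prove the contrapositive: assuming $(x_n)$ is not a minimal representation, I will produce a nonempty relatively open subset of $E(x_n)$ that misses $U(x_n)$ entirely, so that $U(x_n)$ cannot be dense. Since the notion of minimal representation is only defined when $E(x_n)$ is a Cantorval or a finite sum of closed intervals, this type is implicit throughout. So suppose $(x_n)$ is not minimal. By the definition, there is an infinite subsequence, whose terms I call $(b_n)$, such that removing it leaves a sequence $(y_n)\subset(x_n)$ for which $E(y_n)$ is again a Cantorval (respectively a finite sum of closed intervals). Thus $(x_n)=(y_n)\cup(b_n)$, and because $(x_n)$ is summable its terms tend to $0$, so in particular $b_n\to 0$. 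This places us exactly in the hypotheses of the preceding theorem.

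I would next record two elementary facts. First, $E(y_n)\subseteq E(x_n)$: every subsum of the $y$-terms equals the subsum of $(x_n)$ over the corresponding index set, so every element of $E(y_n)$ is already an element of $E(x_n)$. Second, applying the preceding theorem to the decomposition $(x_n)=(y_n)\cup(b_n)$, every point $x\in E(y_n)\setminus B$ has $\mathfrak{c}$-many expansions as a subsum of $(x_n)$, where $B$ denotes the set consisting of the left endpoints of the non-degenerate interval-components of $E(y_n)$ together with its trivial components. In particular no such $x$ is unique, so $U(x_n)\cap\bigl(E(y_n)\setminus B\bigr)=\emptyset$.

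To conclude, I would exhibit the witnessing open set. Since $E(y_n)$ is a Cantorval or a finite sum of closed intervals, it has a non-degenerate connected component $[a,b]$ with $a<b$. The only point of $B$ lying in $[a,b]$ is its left endpoint $a$: an interior point of $[a,b]$ is neither the left endpoint of a component (components are pairwise disjoint and $[a,b]$ is maximal) nor a trivial component, so $(a,b)\cap B=\emptyset$, i.e.\ $(a,b)\subseteq E(y_n)\setminus B$. By the inclusion $E(y_n)\subseteq E(x_n)$ we get $(a,b)\subseteq E(x_n)$, so $(a,b)$ is a nonempty open subset of the subspace $E(x_n)$; and by the previous paragraph it contains no point of $U(x_n)$. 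Hence $U(x_n)$ is not dense in $E(x_n)$, which is the desired contrapositive.

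The one place that needs care—and which I would flag as the main (if mild) obstacle—is the bookkeeping around $B$, namely verifying $B\cap(a,b)=\emptyset$. This is exactly the statement that a point interior to a component interval of the closed set $E(y_n)$ can be neither a left endpoint of a component nor an isolated (trivial) component; equivalently, it reflects the mechanism of the preceding theorem's proof, where for an interior $x$ one has $x-b_k\in E(y_n)$ for all large $k$, producing the extra expansions. Once this is pinned down, the rest is a direct invocation of the preceding theorem together with the inclusion $E(y_n)\subseteq E(x_n)$.
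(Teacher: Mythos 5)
Your proof is correct and follows the same route the paper intends: the corollary is the contrapositive application of the preceding theorem, writing a non-minimal $(x_n)$ as $(y_n)\cup(b_n)$ with $b_n\to 0$ and $E(y_n)$ still a Cantorval (resp.\ finite union of closed intervals), so that $U(x_n)\cap E(y_n)\subset B$ kills density. Your extra care in checking that $B$ misses the interior $(a,b)$ of a non-degenerate component of $E(y_n)$ is exactly the detail the paper leaves implicit.
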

\begin{corollary}
If $E(x_n)$ has no $\mathfrak{c}$-points, then $(x_n)$ is minimal representation.
\end{corollary}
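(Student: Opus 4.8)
The plan is to prove the contrapositive: assuming $(x_n)$ is \emph{not} a minimal representation, I will exhibit a $\mathfrak{c}$-point of $E(x_n)$, which contradicts the hypothesis. Throughout I keep in mind that ``minimal representation'' is only defined when $E(x_n)$ is a Cantorval or a finite sum of closed intervals, so I may assume $E(x_n)$ is of one of these two types.

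First I would unpack the failure of minimality. If $(x_n)$ is not minimal, then by the definition there is an infinite subsequence $(z_n)$ of $(x_n)$ whose removal leaves a sequence $(y_n)\subset (x_n)$ for which $E(y_n)$ is still of the same type as $E(x_n)$: a Cantorval when $E(x_n)$ is a Cantorval, and a finite sum of closed intervals when $E(x_n)$ is a finite sum of closed intervals. Writing $(x_n)=(y_n)\cup (z_n)$, I note that $(z_n)\to 0$, since $(x_n)$ is absolutely summable and every subsequence of a null sequence is itself null.

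The core of the argument is then a direct application of the (unnumbered) theorem opening this section, the one preceding the definition of minimal representation. Taking $(b_n):=(z_n)$, that theorem applies because $E(y_n)$ is a Cantorval or a finite sum of closed intervals and $(b_n)$ tends to $0$; its ``moreover'' clause asserts that every point of $E(y_n)\setminus B$ has $\mathfrak{c}$-many expansions with respect to $(x_n)=(y_n)\cup (b_n)$, that is, is a $\mathfrak{c}$-point of $E(x_n)$. Here $B$ is, as in that theorem, the union of the set of left endpoints of the interval components of $E(y_n)$ and of its trivial (singleton) components.

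It therefore remains only to check that $E(y_n)\setminus B\neq\emptyset$, and this is the one step that needs a structural observation rather than a citation. Since $E(y_n)$ is a Cantorval or a finite sum of closed intervals, it has non-empty interior, so it contains a non-degenerate maximal interval component $[c,d]$. Any interior point $x\in (c,d)$ is neither a left endpoint of an interval component of $E(y_n)$ nor a trivial component, hence $x\notin B$ while $x\in E(y_n)$; thus $E(y_n)\setminus B\neq\emptyset$. Any such $x$ is then a $\mathfrak{c}$-point of $E(x_n)$, contradicting the assumption that $E(x_n)$ has no $\mathfrak{c}$-points. I expect the only delicate point to be confirming that the defining conditions of $B$ genuinely exclude the interior of an interval component (so that $E(y_n)\setminus B$ is non-trivially large); once that is settled, the conclusion is immediate from the cited theorem together with the summability of $(x_n)$.
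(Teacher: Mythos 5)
Your argument is correct and is exactly the deduction the paper intends: the corollary is stated without proof because it follows from the preceding theorem by the contrapositive you give (non-minimality yields $(y_n)$ of the same type plus a removed null sequence $(b_n)$, the theorem's ``moreover'' clause then produces $\mathfrak{c}$-points on $E(y_n)\setminus B$). Your extra check that $E(y_n)\setminus B\neq\emptyset$ via an interior point of a nondegenerate component is the right (and only) structural observation needed.
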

\begin{example}
Any sequence $(2^r,2^{r-1},\ldots,4,3,2;\frac{1}{2^{r+1}})$ for $r\in\mathbb{N}$ is a minimal representation of a Cantorval. In particular if $r=1$, then the sequence $(3,2;\frac{1}{4})$ is a minimal representation of  the Guthrie and Nymann's Cantorval.
\end{example}
\begin{example}
Let us consider $E(3,\underbrace{2,2,\ldots,2}_{m - \ \text{times}};q)$. By \cite{BFS} we know that $E$ is a Cantorval for all $q\in[\frac{1}{2m},\frac{2}{2m+5}]$. After removing subsequence $(\frac{2}{q^n})$, we obtain $E(3,\underbrace{2,2,\ldots,2}_{m-1 - \ \text{times}};q)$, which is a Cantorval for $q\in [\frac{1}{2m-2},\frac{2}{2m+3}]$. Note that for $m\geq 5$ we have $[\frac{1}{2m},\frac{2}{2m+5}]\cap [\frac{1}{2m-2},\frac{2}{2m+3}]=[\frac{1}{2m-2},\frac{2}{2m+5}]\neq\emptyset$. Hence for $m\geq 5$ and $q\in [\frac{1}{2m-2},\frac{2}{2m+5}]$ a sequence $(3,\underbrace{2,2,\ldots,2}_{m - \ \text{times}};q)$ is not a minimal representation of Cantorval $E$. In particular $U$ is not dense in $E$. 
\end{example}
The notion of minimal representation can be also applied for $E$, which is not a Cantorval.
\begin{example}\label{geom}
Let us consider a geometric sequence $(q^n)$. In the papers \cite{DJK} and \cite{Erdos} the ratio $q_0=\frac{\sqrt{5}-1}{2}$ is on the special interest. Basically for any $q\in[q_0,1)$ the geometric sequence is a locker, which suffices for $U=\{0,\frac{q}{1-q}\}$, while for any $q\in[\frac{1}{2},q_0)$ the set $U$ has more than two elements.
\begin{enumerate}
\item For $q\in[\frac{1}{2},q_0]$ it is a minimal representation of the interval $[0,\frac{q}{1-q}]$;
\\Indeed if $q\in[\frac{1}{2},q_0)$, then by removing any element $q^k$ we form a gap in $E$ which right edge is the smallest not removed element larger than $q^k$. Hence we obtain infinite many gaps, so $E$ is a Cantorval or even a Cantor set.
\\For $q=q_0$ the locker condition is satisfied with equality, so by removing any two elements $q^k$ and $q^m$ we again form a gap in $E$. 
\item For $q\in(q_0,1)$ it is not a minimal representation of the interval $[0,\frac{q}{1-q}]$;
\\Let us remove $q^{k_1}$ from our series. Then $\delta =r_{k_1}-q^{k_1-1}>0$. One can find $k_2$ such that $r_{k_2-1}<\delta$, that is removing elements with indexes larger or equal $k_2$ from the tail $r_{k_1}$ will not break the previous inequality. Let us remove $q^{k_2}$ from our series. We continue by induction and remove the terms $(q^{k_n})$ from the geometric sequence. The achievement set for the remaining sequence is still an interval.  
\end{enumerate}

\end{example}

Let us compare the both notions of locker and minimal sequence by characterizing them in Kakeya-like conditions.
Note that a sequence $(x_n)$ is a locker iff $x_k\leq r_{k+1}$ for all $k\in\mathbb{N}$. We will prove the following.

\begin{theorem}\label{Kakeyanormal}
Let $(x_n)$ be such that $E(x_n)$ is a finite sum of closed intervals. Then $(x_n)$ is not minimal iff $x_k<r_{k+1}$ for infinitely many $k$'s.
\begin{proof}
$\Rightarrow.$ Let us assume that $(x_n)$ is not minimal. It means that there exists an increasing sequence of natural numbers $(n_i)$ such that for the subsequence $(x_{1},x_{2},\ldots,x_{n_1},x_{n_2+1},x_{n_2+2},\ldots,x_{n_3},x_{n_4+1},\ldots)=(y_n)$ we have $E(y_n)$ is a finite sum of closed intervals.  That is, we have removed from $(x_n)$ consecutive terms with indexes $n_i+1,n_i+2,\ldots,n_{i+1}$ for each odd $i\in\mathbb{N}$. Hence for every large enough (to satisfy Kakeya condition for achievement set to be a sum of intervals) odd $i$ we have:
$$x_{n_i}\leq \sum_{j\in2\mathbb{N},j>i}\sum_ {k=n_j+1}^{n_{j+1}}x_k <\sum_{k\geq n_{i+1}}x_k\leq \sum_{k\geq n_{i}+2}x_k=r_{n_i+1}$$
\\$\Leftarrow.$  Assume that there exists an increasing sequence of natural numbers $(n_i)$ such that $r_{n_i+1}>x_{n_i}$ for every $i\in\mathbb{N}$. Without losing generality we may assume that for $n\geq n_1$ the sequence $(x_n)$ is slowly convergent. 
In the beginning we remove the term $x_{n_1+1}$. Let $k_1$ be such that $r_{n_{k_1}}<r_{n_1+1}-x_{n_1}$. We remove the term $x_{n_{k_1}+1}$. On can find $k_2$ satisfies the inequality $\min\{r_{n_{1}+1}-x_{n_1}-x_{n_{k_1}+1},r_{n_{k_1}+1}-x_{n_{k_1}}\}>r_{n_{k_2}}$. We omit the term $x_{n_{k_2}+1}$. Now let $k_3$ be such that 
$$\min\{r_{n_{1}+1}-x_{n_1}-x_{n_{k_1}+1}-x_{n_{k_2}+1},r_{n_{k_1}+1}-x_{n_{k_1}}-x_{n_{k_2}+1},r_{n_{k_2}+1}-x_{n_{k_2}}\}>r_{n_{k_3}} $$
We continue by induction and obtain a subsequence $(x_{n_{k_i}})$ of removed terms such that the achievement set for the remaining sequence is a finite sum of closed intervals.
\end{proof}
\end{theorem}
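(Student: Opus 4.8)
The plan is to prove both implications through Kakeya's criterion (Theorem \ref{Kakeya}(1)): a subsequence $(y_n)$ is a finite union of closed intervals exactly when $y_k\le\sum_{j>k}y_j$ for all but finitely many $k$. Throughout I would track the \emph{local slack} $d_k:=r_k-x_k\ge 0$ (valid for large $k$ since $E(x_n)$ is a finite union of intervals) and record that deleting an index set $R\subseteq\mathbb{N}$ replaces the tail at a kept position $k$ by $r_k-S(k)$, where $S(k):=\sum_{m\in R,\;m>k}x_m$. Hence the remaining sequence is again a finite union of closed intervals if and only if $S(k)\le d_k$ for all but finitely many kept $k$. Since $x_k<r_{k+1}$ is equivalent to $d_k=r_k-x_k>x_{k+1}$, the whole theorem becomes a statement about when such an infinite deletion can keep $S(k)\le d_k$ eventually.

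For the implication ``not minimal $\Rightarrow$ infinitely many slack indices'' I would start from an infinite deletion set $R$ whose complement is also infinite (if the kept part were finite, $E(y_n)$ would be a finite set, not a union of nondegenerate intervals). Grouping $\mathbb{N}$ into alternating kept and deleted blocks, let $k$ be the last kept index before a deleted block, so $x_{k+1}$ is deleted. Applying Kakeya to $(y_n)$ at this position gives $x_k\le r_k-S(k)$, and since $S(k)\ge x_{k+1}$ with further deleted terms beyond, in fact $S(k)>x_{k+1}$, whence $x_k\le r_k-S(k)<r_k-x_{k+1}=r_{k+1}$. Letting $k$ run over all such block boundaries produces infinitely many indices with $x_k<r_{k+1}$.

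For the converse I would feed the slack indices into a greedy deletion, mirroring the geometric case of Example \ref{geom}. Writing the hypothesis as $d_{n_i}>x_{n_i+1}$ for an increasing sequence $(n_i)$, and reducing (WLOG) to the range where $(x_n)$ is slowly convergent, I would delete the single term $x_{n_i+1}$ sitting just past recursively chosen slack indices. At each step I pick the next slack index so deep that the \emph{entire} tail of all terms deleted thereafter stays below the running budget $d_{n_i}-x_{n_i+1}$ remaining at every previously used slack index; this is always possible because $r_m\to 0$, so some slack index $n_{i+1}$ has full tail $r_{n_{i+1}}$ undercutting the current minimum budget. This keeps $S(k)\le d_k$ at each used slack index $k$, and since exactly one term is removed per step, $R$ is infinite, so $(x_n)$ is not minimal.

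The main obstacle is that this scheme only directly enforces $S(k)\le d_k$ at the \emph{chosen} slack indices, whereas Kakeya demands it at \emph{all} but finitely many kept $k$. The delicate point is the intermediate indices: one must show that the local slack $d_k$ there does not drop below the accumulated deleted tail $S(k)$. I would try to argue that, under the hypothesis, the indices where $d_k$ is dangerously small can be straddled by spacing the deletions super-geometrically, so that $S(k)$ is eventually dominated by $d_k$ \emph{uniformly} in $k$, not merely along the slack subsequence. Securing this uniform domination — rather than the slack-index bookkeeping, which is routine — is where the real work lies, and it is the step most in need of care, since an uncontrolled accumulation of $S(k)$ against a vanishing $d_k$ is precisely what would break the construction.
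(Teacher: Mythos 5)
Your proof of the forward implication is correct and is essentially the paper's own argument: the paper, too, applies Kakeya's criterion to the kept subsequence at the last kept index before each deleted block and obtains $x_k\le r_k-S(k)<r_k-x_{k+1}=r_{k+1}$ at infinitely many such $k$. The problem is the converse, and you have put your finger on exactly the right spot. The greedy deletion you describe (which is also the paper's construction: delete one term $x_{n_{k_i}+1}$ just past each of a sparsely chosen subsequence of slack indices, keeping the deleted tail below the running budgets at the slack indices already used) enforces $S(k)\le d_k$ only at the \emph{chosen} indices, while Kakeya demands it at all but finitely many kept $k$. You defer this ``uniform domination'' at intermediate indices as the real remaining work; in fact that gap cannot be closed, because the hypothesis puts no lower bound whatsoever on $d_k$ off the slack subsequence. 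If $d_k=0$ on an infinite set of indices, then \emph{no} infinite deletion can pass Kakeya's test: every kept zero-slack index is violated, since any infinite deletion has a positive tail sum. The converse implication is therefore false, and — notably — the paper's own published proof of this direction contains precisely the gap you identified: it never checks the intermediate kept indices.

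Here is a concrete counterexample. Take
\[
(x_{3j+1},x_{3j+2},x_{3j+3})=\Big(\frac{3}{2}t^j,\;t^j,\;t^j\Big),\qquad j=0,1,2,\dots,\qquad t=\frac{2}{9}.
\]
The sequence is nonincreasing, the tail of all blocks from the $j$-th onward equals $\frac{7}{2}\cdot\frac{t^j}{1-t}=\frac{9}{2}t^j$, and one computes $r_{3j+1}=3t^j$, $r_{3j+2}=2t^j$, $r_{3j+3}=t^j$, hence $d_{3j+1}=\frac{3}{2}t^j$, $d_{3j+2}=t^j$, $d_{3j+3}=0$. So $x_k\le r_k$ for all $k$, i.e.\ $E(x_n)=[0,\frac{9}{2}]$ is an interval, and $x_{3j+1}=\frac{3}{2}t^j<2t^j=r_{3j+2}$ for every $j$, so $x_k<r_{k+1}$ holds for infinitely many $k$. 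Nevertheless $(x_n)$ is minimal. Indeed, let $R$ be any infinite deleted set with infinite kept complement (a finite remainder yields a finite achievement set), and note $S(k)>0$ for every $k$. If infinitely many indices $3j+3$ are kept, Kakeya fails at each of them, since $d_{3j+3}=0<S(3j+3)$. Otherwise eventually every $3j+3$ lies in $R$; then, if infinitely many $3j+2$ are kept, for all large such $j$ we get $S(3j+2)\ge x_{3j+3}+(\text{later deleted terms})>t^j=d_{3j+2}$, again infinitely many failures. Otherwise eventually both $3j+2$ and $3j+3$ lie in $R$, so infinitely many $3j+1$ are kept, and at those $S(3j+1)\ge x_{3j+2}+x_{3j+3}=2t^j>\frac{3}{2}t^j=d_{3j+1}$. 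In every case the remaining sequence violates Kakeya's condition infinitely often, so its achievement set is not a finite union of closed intervals. Thus the ``uncontrolled accumulation of $S(k)$ against a vanishing $d_k$'' you feared really occurs and no spacing of the deletions avoids it; for instance, the paper's construction applied to this sequence deletes terms $x_{3j+2}$ along a sparse set of $j$'s and thereby breaks Kakeya at every kept index of the form $3i+3$. Your instinct to isolate this step as the crux was correct — the honest conclusion is that the step fails and the theorem's ``if'' direction needs a stronger hypothesis (some uniform control of the slacks $d_k$), not better bookkeeping.
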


\begin{corollary}\label{niegesty}
Let $(x_n)$ be such that $E(x_n)$ is a finite sum of closed intervals. Assume that $x_k<r_{k+1}$ for infinitely many $k$'s, then $U(x_n)$ is not dense in $E(x_n)$ and $E(x_n)$ contains $\mathfrak{c}$ - points.
\end{corollary}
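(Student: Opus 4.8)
The plan is to combine the characterization of non-minimality from Theorem \ref{Kakeyanormal} with the first (unnumbered) theorem of the present section, which controls $U(x_n)$ after one enlarges a representation by a null sequence. First I would invoke the ``$\Leftarrow$'' direction of Theorem \ref{Kakeyanormal}: since $x_k<r_{k+1}$ for infinitely many $k$, the sequence $(x_n)$ is not a minimal representation of the finite union of closed intervals $E(x_n)$. Unwinding the definition of minimality, this produces an infinite subsequence of $(x_n)$ whose removal leaves a sequence $(y_n)\subset (x_n)$ with $E(y_n)$ still a finite sum of closed intervals. Writing $(b_n)$ for the removed subsequence, I obtain the decomposition $(x_n)=(y_n)\cup (b_n)$ with $(b_n)$ infinite.

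Next I would observe that $(b_n)\to 0$: it is an infinite subsequence of the summable sequence $(x_n)$, so its terms tend to $0$. This is precisely the hypothesis needed to apply the opening theorem of this section to the pair $(y_n),(b_n)$. That theorem yields $U(x_n)\cap E(y_n)\subset B$, where $B$ is the finite set consisting of the left endpoints of the interval-components of $E(y_n)$ together with its trivial components; moreover every point of $E(y_n)\setminus B$ has $\mathfrak{c}$-many expansions.

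From here both conclusions follow at once. Since $E(y_n)$ is a finite sum of closed intervals it contains an open interval $I$, and because $(y_n)\subset (x_n)$ we have $I\subseteq E(y_n)\subseteq E(x_n)$. As $B$ is finite, $I\setminus B$ is a nonempty relatively open subset of $E(x_n)$ disjoint from $U(x_n)$ (using $U(x_n)\cap E(y_n)\subset B$); hence $U(x_n)$ is not dense in $E(x_n)$. For the second assertion, pick any $x\in I\setminus B\subseteq E(y_n)\setminus B$; by the moreover clause $x$ has $\mathfrak{c}$-many expansions, so $E(x_n)$ contains $\mathfrak{c}$-points.

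The only point requiring care is the bookkeeping that identifies the subsequence removed in the non-minimality argument with the null sequence $(b_n)$ of the opening theorem, and the remark that $B$, being the finite edge/trivial-component set of a finite union of intervals, cannot meet $I$ densely. No genuinely hard estimate is needed here, since all of the analytic content has already been isolated in Theorem \ref{Kakeyanormal} and in the opening theorem of this section.
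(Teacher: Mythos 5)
Your proof is correct and takes essentially the same route the paper intends: the corollary is stated without proof immediately after Theorem \ref{Kakeyanormal}, the implicit argument being exactly your chain — non-minimality via the ``$\Leftarrow$'' direction of Theorem \ref{Kakeyanormal}, then the unnumbered theorem opening this section applied to the decomposition $(x_n)=(y_n)\cup(b_n)$, whose conclusions give both the failure of density and the existence of $\mathfrak{c}$-points. The extra bookkeeping you supply (that $(b_n)\to 0$ as an infinite subsequence of a summable sequence, and that $B$ is finite for a finite union of closed intervals, so $I\setminus B$ is a nonempty open subset of $E(x_n)$ missing $U(x_n)$) is sound and fills in precisely what the paper leaves implicit.
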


\begin{corollary}\label{twoinfgaps}
From Theorem \ref{Kakeyanormal} we know that a sequence $(x_n)$ which achievement set $E(x_n)$ is a finite sum of closed intervals is minimal iff $x_k\geq r_{k+1}$ for almost all $k$'s.
Both of the equivalent conditions have nice geometric interpretation.
Note that for interval-like achievement sets its minimality means: removing any infinite subsequence makes infinite many gaps, 
while the condition $x_k\geq r_{k+1}$ for all but finitely many $k$'s can be read as follows: removing any two far enough terms makes a new gap in the remaining set of subsums. 

Note also that the minimality for Cantorval-case has a different sense, namely after deletion of any infinite subsequence the left achievement set loses all its intervals. 

\end{corollary}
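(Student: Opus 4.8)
The plan is to treat Corollary~\ref{twoinfgaps} as three separate claims: the characterization of minimality by the inequality $x_k\geq r_{k+1}$, the reading of minimality in terms of gaps, and the reading of the inequality in terms of two-term deletions. The first is purely logical, the second is just the trichotomy rephrased, and the third is the only part that needs a genuine (if short) argument.

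I would begin with the characterization itself. Theorem~\ref{Kakeyanormal} states the equivalence ``$(x_n)$ is not minimal $\iff x_k<r_{k+1}$ for infinitely many $k$.'' Negating both sides gives ``$(x_n)$ is minimal $\iff$ it is false that $x_k<r_{k+1}$ for infinitely many $k$,'' and the negated right-hand side is precisely ``$x_k\geq r_{k+1}$ for all but finitely many $k$.'' Thus the characterization requires no new work: it is literally the contrapositive of Theorem~\ref{Kakeyanormal}.

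Next I would justify the geometric reading of minimality. By the definition of minimal representation, deleting any infinite subsequence leaves a sequence $(y_n)\subset(x_n)$ whose achievement set $E(y_n)$ is not a finite union of closed intervals. By Theorem~\ref{GNCantorval} the only alternatives are a Cantor set or a Cantorval, and a compact achievement set is a finite union of closed intervals exactly when it has only finitely many gaps; a Cantor set and a Cantorval both possess infinitely many gaps. Hence ``not a finite union of intervals'' is synonymous with ``infinitely many gaps,'' which gives the stated reading; and in the Cantorval case the failure is recorded more sharply by saying that the remaining set loses all of its interval components.

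The only place where real computation enters is the reading of the inequality, and this is where I expect the main care to be needed. Suppose $x_k\geq r_{k+1}$ for all $k\geq N$, and delete two terms $x_i,x_j$ with $N<i<j$. In the thinned sequence the sum of all terms lying strictly beyond $x_{i-1}$ equals $r_{i-1}-x_i-x_j=r_i-x_j$, while $x_{i-1}\geq r_i>r_i-x_j$; by the Kakeya criterion this single strict inequality opens a gap, so removing any two far-enough terms does create one. The subtle point, and the reason the statement insists on \emph{two} terms rather than one, is the equality case $x_{i-1}=r_i$: deleting only $x_i$ leaves the tail equal to $r_i=x_{i-1}$ and no gap appears, whereas the extra deletion of $x_j$ strictly lowers that tail and forces the gap. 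For the converse I would argue by contraposition, observing that if $x_k<r_{k+1}$ infinitely often then one can space out deletions exactly as in the ``$\Leftarrow$'' construction of Theorem~\ref{Kakeyanormal} so that the Kakeya interval condition survives and no gap is produced. The careful bookkeeping of this equality boundary is the main obstacle throughout, since it is precisely what separates ``one term suffices'' from ``two terms suffice.''
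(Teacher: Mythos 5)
Your first two pieces are fine and they coincide with what the paper does: the paper offers no actual proof of this corollary, since its first sentence is literally the contrapositive of Theorem \ref{Kakeyanormal}, and the two geometric sentences are meant as informal readings of the two sides of that equivalence. Your forward argument for the two-term reading is also correct: deleting $x_i,x_j$ with $N<i<j$ leaves, beyond $x_{i-1}$, the tail $r_{i-1}-x_i-x_j=r_i-x_j<r_i\leq x_{i-1}$, so the Kakeya gap criterion applies at position $i-1$; and your diagnosis of the equality case $x_{i-1}=r_i$ as the reason one deletion does not suffice is precisely the phenomenon of Example \ref{geom}(1) (cf.\ Example \ref{zlotyciag}).

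The genuine gap is the converse you attempt, and it is not a missing detail: the statement you aim at is false. Methodologically, the ``$\Leftarrow$'' construction of Theorem \ref{Kakeyanormal} is designed to keep the thinned achievement set a finite union of intervals, i.e.\ to keep each term at most its tail up to finitely many exceptions; it does not, and cannot, arrange that ``no gap is produced'', because every deletion performed beyond an index $m$ with $x_m=r_m$ automatically opens a new gap at $m$, and such equality indices can coexist with infinitely many slack indices. Concretely, take $y_1=\frac{1}{2}$, $y_2=y_3=\frac{3}{20}$, $y_4=\frac{1}{10}$, and $y_{n+4}=\frac{y_n}{10}$ for all $n$. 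Then $\sum_n y_n=1$ and $y_m\leq r_m$ for every $m$, so $E(y_n)=[0,1]$; the slack inequality $y_k<r_{k+1}$ holds for $k=2,6,10,\dots$, so the condition ``$x_k\geq r_{k+1}$ for all but finitely many $k$'' fails; and yet, since $y_1=r_1$, removing any two terms $y_i,y_j$ with $2\leq i<j$ creates the new gap $\left(\frac{1}{2}-y_i-y_j,\frac{1}{2}\right)$ (subsums using $y_1$ are $\geq\frac{1}{2}$, subsums avoiding it are $\leq\frac{1}{2}-y_i-y_j$, and $E(y_n)$ had no gaps at all). So the two-term--removal property does not imply the inequality; the corollary's second ``reading'' can only be the one-way implication that you did prove, which is presumably why the paper states it informally and proves nothing. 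I would flag one further consequence of this example, since both you and the corollary lean on Theorem \ref{Kakeyanormal}: every infinite removal from $(y_n)$ leaves either a finite set or a set with infinitely many gaps (a gap appears at each surviving equality index, and at the last surviving term $\frac{3}{20}\cdot 10^{-k}$ of each block, whose remaining tail is at most $\frac{1}{30}\cdot 10^{-k}$), so $(y_n)$ is in fact minimal although $y_k<r_{k+1}$ holds infinitely often; the published proof of the ``$\Leftarrow$'' half of that theorem verifies the Kakeya inequality only at the chosen removal positions, which is exactly the oversight your converse inherits.
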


In Example \ref{geom} we have shown that typical sequence is a non-minimal locker either minimal but not locker. 
Although the both conditions being a locker and not minimal seem to be close to each other, none of them implies another. 
\begin{example}\label{zlotyciag}
The geometric sequence $(x_n)=(q_0^n)$ for $q_0=\frac{\sqrt{5}-1}{2}$ is a minimal locker, which is also a counterexample for reversal the implication in Corollary \ref{niegesty}. Indeed $U(x_n)=\{0,\sum_{n=1}^{\infty}x_n\}$ is far from being dense in the interval $E(x_n)$. Moreover $x=\sum_{n=1}^{\infty}x_{3n-2}$ is its $\mathfrak{c}$-point, since $x=\sum_{n\in\bigcup_{i\in\mathbb{N}} A_i}x_{n}$ for all sequences of sets of indices $(A_i)$ such that $A_i=\{3i-2\}$ either $A_i=\{3i-1,3i\}$ for each $i\in\mathbb{N}$. Also note that for every $k\in\mathbb{N}$ the equality $x_k= r_{k+1}$ holds. 
\end{example}
 In the next Example we construct the last possibility, that is the  sequence, which is not minimal nor locker. 

\begin{example}\label{lastpossible}
Let $(x_n)=(7,2,2,1,1,\frac{1}{2},\frac{1}{2},\frac{1}{4},\frac{1}{4},\frac{1}{8},\frac{1}{8},\ldots)$. 
\\Then it is not locker, since $x_1=7>6=r_2$. On the other hand $E(x_n)=[0,15]$ and $E(x_{2n})=[0,4]$ are both intervals, so $(x_n)$ is not a minimal representation. 
\end{example}

Example \ref{lastpossible} shows that a locker condition is sensitive to adding single term, while being minimal is not affected by adding or removing finitely many terms.

Now let us consider the following two conditions: having a $\mathfrak{c}$ - point and $U(x_n)$ being not dense in $E(x_n)$. 
Both of them are implied by the non-minimality of the sequence $(x_n)$. However it does not mean that one of them implies another or even that they are equivalent, although it may be suggested by the previously constructed Examples.






In the end of this section we give the Example of sequence for which neither $f^{-1}(1)$ nor $f^{-1}(\mathfrak{c})$ is dense in its achievement set. 

\begin{example}\label{jednopowtorzenie}
Let $(x_n)=(\frac{1}{2}, \frac{1}{2}, \frac{1}{4}, \frac{1}{8}, \frac{1}{16}, \ldots)$. Hence $A(x_n)=[0,\frac{3}{2}]$. Thus $f^{-1}(1)=\Big(([0,\frac{1}{2}]\cup [1,\frac{3}{2}])\setminus D\Big)\cup\{0,\frac{3}{2}\}$, $f^{-1}(2)=\bigg(D\cap \Big((0,\frac{1}{2})\cup (1,\frac{3}{2})\Big)\bigg)\cup \Big((\frac{1}{2},1)\setminus D\Big)$, $f^{-1}(3)=\{\frac{1}{2},1\}$, $f^{-1}(4)=(\frac{1}{2},1)\cap D$.
where $D$ is the set of all dyadic numbers, that is $D=\{\frac{a}{2^b} : a\in\N_0, b\in \N_0\}$. Thus $f^{-1}(\mathfrak{c})=\emptyset$ and $f^{-1}(1)$ is not dense in $A(x_n)$.
\end{example}

\begin{problem}
Note that if we repeat more but still finitely many terms as in the Example \ref{jednopowtorzenie}, then we get $f^{-1}(\mathfrak{c})=\emptyset$ and the length of the gap for $f^{-1}(1)$ in the middle of the achievement set will increase. Finally when we repeat infinitely many of the terms, then the $\mathfrak{c}$ will appear and $f^{-1}(1)$ will be empty. Hence we may ask is it true that $f^{-1}(\mathfrak{c})\neq\emptyset$ implies that $f^{-1}(1)$ is not dense in any interval of the achievement set
($f^{-1}(1)$ is a meager set).
If the answer is negative, then we may ask if it is possible to construct a sequence with both $f^{-1}(1)$ and $f^{-1}(\mathfrak{c})$ being dense in $E(x_n)$. 
\end{problem}

\begin{problem}
Define some kind of minimality of representation for Cantor sets $E(y_n)$. Note that it can not be defined in the similar way since after removing infinite subsequence $(x_n)\subset (y_n)$ we obtain finite set if $(y_n)\setminus (x_n)$ is finite either a Cantor set, when we leave infinite many terms. If asking about subsets does not work maybe it is better to use supersets, that is $(y_n)$ is a minimal representation of $E(y_n)$, which is a Cantor set if after adding any infinite sequence $(x_n)$ for $(z_n)=(x_n)\cup (y_n)$ we have $E(z_n)$ is not a Cantor set ($E(z_n)$ is Cantorval either finite sum of compact intervals). However we think that this definition leads to minimality of each representation and hence does not work. 
\end{problem}

\section{Cantors with  continuum points}
 It is almost obvious that for a multigeometric series with $1$ to $1$ representations in $\Sigma=\{\sigma_0<\sigma_1<\ldots<\sigma_{k}\}$ for $q$ small enough we have $U(c_1,c_2,\ldots,c_m;q)=E(c_1,c_2,\ldots,c_m;q)$. If $\delta=\min\{\sigma_{i}-\sigma_{i-1} : i\in\{1,\ldots,k\}\}$, then for $\frac{\sigma_{k} q}{1-q}<\delta$ (or $q\in(0,\frac{\delta}{\delta+\sigma_{k}})$), $(c_1,c_2,\ldots,c_m;q)$ is quickly convergent. Quick convergence suffices for the equality $U(x_n)=E(x_n)$. On the other hand it is not hard to construct a Cantor set with  $\mathfrak{c}$ - point: it has been already done in many papers. Most of the methods are based on repeating the terms. In \cite{Nymannlin} the author shows how to construct a Cantor set, which algebraic sum with itself remains a Cantor set. The sums of more than two sets were also considered. Moreover  in \cite{BBGS} the authors considered multigeometric series and showed that if $q<\frac{1}{\vert\Sigma\vert}$, then $E(c_1,c_2,\ldots,c_m;q)=\{\sum_{n=1}^{\infty}y_nq^n : (y_n)\in\Sigma^{\infty}\}$ is a Cantor set, no matter if the elements $c_1, c_2,\ldots, c_m$ repeat. 
 
\begin{example}\label{samejedynki}
Let $E=E(\underbrace{1,1,\ldots,1}_{m - \ \text{times}};q)$. 
\begin{enumerate}
\item for $q<\frac{1}{m+1}$ the multigeometric sequence is quickly convergent, so the set $E$ is a Cantor set;
\item for $q\geq\frac{1}{m+1}$  the multigeometric sequence is slowly convergent, so the set $E$ is an interval
\end{enumerate}
\end{example}
Note that for the multigeometric sequence considered in Example \ref{samejedynki} we have $\frac{1}{m+1}=\frac{1}{\vert \Sigma\vert}=\frac{\delta}{\delta+\sigma_{k}}$. In general if $c_1,\ldots,c_m$ is a sequence of natural numbers, then $\sigma_{k}\geq\delta\cdot(\vert\Sigma\vert-1)$. Hence $\frac{\delta}{\delta+\sigma_{k}}\leq \frac{1}{\vert \Sigma\vert}$. Moreover the equality  $\frac{\delta}{\delta+\sigma_{k}}= \frac{1}{\vert \Sigma\vert}$ holds iff the sequence $c_1,\ldots, c_m$ is constant.  

\begin{theorem}
Let $c_1,\ldots, c_m$ be a non-constant sequence of natural numbers. Then there exists 
a sequence $(q_p)$ of ratios such that $\lim_{p\to\infty}q_p=\frac{\delta}{\delta+\sigma_{k}}$ and  $E(c_1,c_2,\ldots,c_m;q_p)$ contains $\mathfrak{c}$ - point for every $p\in\mathbb{N}$.
\begin{proof}
Let $\delta=\sigma_j-\sigma_{j-1}=\min\{\sigma_{i}-\sigma_{i-1} : i\in\{1,\ldots,k\}\}$. Fix $p\in\mathbb{N}$. Let $q_p$ satisfies $\delta=\sigma_k(q_p+q_p^2+\ldots+q_p^{p})$. 
Clearly $\lim_{p\to\infty}q_p=\frac{\delta}{\delta+\sigma_{k}}$. Let us consider $x=\sigma_j q_p+\sigma_j q_p^{p+2}+\sigma_j q_p^{2p+3}+\ldots=\sigma_j\sum_{n=0}^{\infty}q_p^{np+n+1}$. Each summand $\sigma_jq_p^{np+n+1}$ can be replaced by $\sigma_{j-1}q_p^{np+n+1}+\sigma_k(q_p^{np+n+2}+q_p^{np+n+3}+\ldots+q_p^{(n+1)p+n+1})$. Hence $x$ has $\mathfrak{c}$ - many expansions.
\end{proof}
\end{theorem}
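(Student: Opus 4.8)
The plan is to choose the ratio $q_p$ implicitly so that one carefully engineered point gains two competing ``local'' representations inside each of infinitely many disjoint blocks of exponents, and then to let these local choices vary independently. First I would fix an index $j$ realizing the minimal gap, $\delta=\sigma_j-\sigma_{j-1}$, and define $q_p\in(0,1)$ as the unique solution of
\[
\delta=\sigma_k\bigl(q_p+q_p^2+\dots+q_p^p\bigr).
\]
Existence and uniqueness are immediate: the map $q\mapsto\sigma_k\sum_{n=1}^p q^n$ is continuous and strictly increasing on $[0,1]$, running from $0$ to $\sigma_k p$, and $\delta\le\sigma_k\le\sigma_k p$. For the limit, note that $\sum_{n=1}^p q_p^n=\delta/\sigma_k$ is constant in $p$, so $(q_p)$ is strictly decreasing; any limit $q_\infty$ must satisfy $q_\infty/(1-q_\infty)=\delta/\sigma_k$, that is $q_\infty=\delta/(\delta+\sigma_k)$, which gives the asserted convergence.

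Next I would exhibit the candidate $\mathfrak{c}$-point. Partition the positive exponents into consecutive blocks of length $p+1$, the $n$-th block beginning at $e_n:=n(p+1)+1$ for $n\ge 0$, and set
\[
x=\sigma_j\sum_{n=0}^\infty q_p^{e_n}.
\]
This lies in $E(c_1,\dots,c_m;q_p)$, being $\sum_{\ell\ge 1}a_\ell q_p^\ell$ with $a_{e_n}=\sigma_j\in\Sigma$ and $a_\ell=\sigma_0=0$ otherwise. The crux is the block-local rewriting
\[
\sigma_j q_p^{e_n}=\sigma_{j-1}q_p^{e_n}+\sigma_k\bigl(q_p^{e_n+1}+q_p^{e_n+2}+\dots+q_p^{e_n+p}\bigr),
\]
which is precisely the defining relation for $q_p$: the bracket equals $\sigma_k(q_p+\dots+q_p^p)\,q_p^{e_n}=\delta q_p^{e_n}=(\sigma_j-\sigma_{j-1})q_p^{e_n}$. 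The $p$ exponents $e_n+1,\dots,e_n+p$ fill exactly the remainder of block $n$ (since $e_{n+1}=e_n+p+1$, so consecutive blocks abut without overlap), and the replacement uses only admissible digits $\sigma_{j-1},\sigma_k,0\in\Sigma$.

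Finally I would conclude by independence: each block may separately retain its original digit $\sigma_j$ or switch to the replacement, and both options contribute the same amount $\sigma_j q_p^{e_n}$ to $x$; hence every element of $\{0,1\}^{\mathbb N}$ yields a valid representation of $x$, and distinct choice sequences give distinct digit sequences (they already differ at position $e_n$, where $\sigma_{j-1}\ne\sigma_j$). Thus $x$ admits $\mathfrak{c}$ representations and is a $\mathfrak{c}$-point. I expect the only delicate point to be bookkeeping rather than substance: verifying $e_{n+1}=e_n+(p+1)$ so the blocks partition $\mathbb N$ without gaps or overlaps, and recalling that $\sigma_{j-1}\in\Sigma$ because it is by definition the predecessor of $\sigma_j$ in the increasing enumeration of $\Sigma$. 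The geometric identity and the convergence estimate are otherwise routine.
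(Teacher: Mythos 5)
Your proposal is correct and follows essentially the same route as the paper: the same implicit definition $\delta=\sigma_k(q_p+\dots+q_p^p)$, the same point $x=\sigma_j\sum_{n=0}^\infty q_p^{n(p+1)+1}$, and the same block-local replacement $\sigma_j q_p^{e_n}=\sigma_{j-1}q_p^{e_n}+\sigma_k(q_p^{e_n+1}+\dots+q_p^{e_n+p})$ applied independently across blocks. The extra details you supply (existence and monotonicity of $q_p$, the limit computation, and the disjointness of blocks) are exactly the routine verifications the paper leaves implicit.
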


\section{Slim representations}
We say that $(x_n)$ is slim, if $E(x_n)$ has no $\mathfrak{c}$-points. If there exists a slim sequence $(y_n)$ with $E(y_n)=E(x_n)$, then we say that $E(x_n)$ has a slim representation. 

\begin{definition}
A monotonic sequence $(x_n)$ with positive terms converging to zero is called semi-fast convergent if it satisfies the condition
\[
x_n>\sum_{k:x_k<x_n}x_k\text{ for all }n\in\N
\]
where the sum is over all indexes $k$ such that $x_k<x_n$
\end{definition}
Properties of semi-fast convergent series were deeply investigated in \cite{BFPW2} and \cite{PWT}. We are following some ideas found there. 

Let $\sum x_n$ be a semi-fast convergent series. Then there are two sequences $(\alpha_k)$ - strictly decreasing tending to zero and $(N_k)$ - sequence of natural numbers such that
\[
(x_n)=(\underbrace{\alpha_1,\alpha_1,\dots,\alpha_1}_{N_1}, \underbrace{\alpha_2,\alpha_2,\dots,\alpha_2}_{N_2}, \underbrace{\alpha_3,\alpha_3,\dots,\alpha_3}_{N_3}, \alpha_4,\dots)
\]
and $\alpha_k>\sum_{i>k}N_i\alpha_i$ for every $k\in\N$. We will use the notation $(\alpha_k,N_k)$ for semi-fast sequences (or series). 

\begin{theorem}\label{TheoremSemiFast}
A semi-fast convergent sequence $(\alpha_k,N_k)$ has a unique (slim) representation if and only if every (for almost all $k$) $N_k$ is of the form $2^{n_k}-1$, where $n_k\in\N$.  
\end{theorem}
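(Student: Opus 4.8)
The plan is to first pin down the arithmetic of a semi-fast series in its canonical block form and then to show that every competing representation of the same set is forced to respect the block decomposition. I would begin by recording the basic structural fact: because $\alpha_k>\sum_{i>k}N_i\alpha_i=:D_{k+1}$, the map $(j_k)\mapsto\sum_k j_k\alpha_k$ from $\prod_k\{0,1,\dots,N_k\}$ onto $E(x_n)$ is a bijection (the digit $j_k$ is recovered from the value by the separation inequality), and moreover the number of $\{0,1\}$-representations of $\sum_k j_k\alpha_k$ equals $\prod_k\binom{N_k}{j_k}$. Consequently, in the canonical representation a point is unique iff $j_k\in\{0,N_k\}$ for all $k$, it has finitely many representations iff $0<j_k<N_k$ for only finitely many $k$, and it is a $\mathfrak{c}$-point iff $0<j_k<N_k$ for infinitely many $k$. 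Note also $D_k=N_k\alpha_k+D_{k+1}>2D_{k+1}$, so the diameters $D_k$ of the successive clusters shrink geometrically; this scale separation is what I will lean on throughout.

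The two ``if'' directions are then handled by a regrouping (binarisation) trick, exactly as in the passage replacing $E(3,2,2,2;\tfrac18)$ by $E(4,3,2;\tfrac18)$. If $N_k=2^{n_k}-1$, I replace the block of $N_k$ equal terms $\alpha_k$ by the $n_k$ terms $\alpha_k,2\alpha_k,\dots,2^{n_k-1}\alpha_k$. Since $2^{n_k-1}\alpha_k\le N_k\alpha_k<\alpha_{k-1}$ the new sequence $(y_n)$ is still monotone, and its block subset-sums are unchanged (binary expansion gives $\{0,\alpha_k,\dots,N_k\alpha_k\}$, each value hit once), so $E(y_n)=E(x_n)$ and every point is now unique; this proves the ``unique representation'' implication. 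For the ``slim'' implication I binarise every block except the finitely many bad ones: the number of representations of any point then becomes a finite product of binomial coefficients, hence finite, so $E(y_n)$ has no $\mathfrak{c}$-point.

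For the two ``only if'' directions I would prove the key Lemma: every monotone representation $(y_n)$ with $E(y_n)=E(x_n)$ refines the canonical block structure, i.e. $\N$ splits into consecutive finite intervals $B_1<B_2<\dots$ with $\{\sum_{n\in B_k}\delta_n y_n:\delta\in\{0,1\}^{B_k}\}=\{0,\alpha_k,\dots,N_k\alpha_k\}$. Granting this, the theorem drops out. Within block $B_k$ the $2^{|B_k|}$ subset-sums cover the arithmetic progression $\{0,\alpha_k,\dots,N_k\alpha_k\}$ of $N_k+1$ values. If $(y_n)$ gives all unique points, every value in every block is attained by exactly one subset, so $2^{|B_k|}=N_k+1$ for all $k$, forcing $N_k=2^{|B_k|}-1$. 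If $(y_n)$ is merely slim, a block in which some value is attained by two subsets contributes a factor $\ge 2$ to the representation count; were there infinitely many such blocks one could choose the repeated value in each and manufacture a $\mathfrak{c}$-point (by the count formula of the first step), contradicting slimness, so all but finitely many blocks satisfy $2^{|B_k|}=N_k+1$.

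The hard part is the refinement Lemma, and this is where the proof must really work, since the set $E(x_n)\subseteq\R$ remembers its representation only through metric invariants. I would prove it by peeling off scales from the top using the geometric separation $D_k>2D_{k+1}$: the bottom cluster $E\cap[0,D_2]$ is clopen (separated from the rest by the gap $\alpha_1-D_2>0$), so one locates the index $s$ with $R_s:=\sum_{n>s}y_n\le D_2<R_{s-1}$, shows that $y_1,\dots,y_s$ have subset-sums exactly $\{0,\alpha_1,\dots,N_1\alpha_1\}$ while $(y_n)_{n>s}$ represents $E\cap[0,D_2]$, and then inducts. The delicate point is that the gap lengths $\alpha_k-D_{k+1}$ need \emph{not} be monotone, so one cannot simply cut $E$ at its longest gaps; instead the cut must be organised by the monotone diameter scale $D_{k+1}$, and one must verify that no term $y_n$ straddles two scales. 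This last verification is exactly where the hypothesis $\alpha_k>D_{k+1}$ (a single block's leading scale exceeds the combined total of all smaller scales) is indispensable, and I expect it to be the main obstacle.
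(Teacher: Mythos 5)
Your proposal has the same skeleton as the paper's proof: force a block decomposition on an arbitrary representation $(y_n)$ of the set, read the power-of-two condition off the injectivity of the block maps, manufacture a $\mathfrak{c}$-point by mixing expansions across infinitely many colliding blocks, and get the positive directions by binarisation. The parts you complete are correct: the counting formula $\prod_k\binom{N_k}{j_k}$, the binarisation of good blocks, and the mixing construction are exactly right (the paper leaves binarisation implicit and obtains the block structure by citing the Second Gap Lemma of \cite{BFPW} rather than proving it). So everything hinges on your refinement Lemma, which you yourself flag as the unfinished ``main obstacle''.

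That is a genuine gap, and it is worse than you expect: the refinement Lemma is false as stated. Take $\alpha_1=10$, $\alpha_2=5$, $\alpha_3=2$, $\alpha_k=4^{-k}$ for $k\geq 4$, and $N_k=1$ for all $k$; this sequence is semi-fast and every $N_k=2^1-1$. The nonincreasing sequence $(y_n)=(5,5,5,2,4^{-4},4^{-5},\dots)$ represents the same set, because $\{0,5,10,15\}+\{0,2\}=\{0,10\}+\{0,5\}+\{0,2\}$; yet no block of $(y_n)$ can have subset-sums $\{0,\alpha_1\}=\{0,10\}$, since any block whose sums contain $10$ must contain two terms equal to $5$ and hence also has $5$ as a sum. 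Your cut construction detects precisely this failure: here $s=2$ (indeed $R_2=D_2$) and the subset-sums of $\{y_1,y_2\}$ are $\{0,5,10\}$, not $\{0,10\}$. (The identical over-strong claim appears in the paper's own proof, in the displayed equality $\{0,\alpha_1,\dots,N_1\alpha_1\}=\{\sum_{i=1}^{p_1}\varepsilon_iy_i\}$, so this is a real issue and not just a defect of your sketch.) The statement that is true, and that your arithmetic actually needs, is the merged form: the blocks of $(y_n)$ realize sums of \emph{several consecutive} canonical blocks, i.e. $\{\sum_{n\in B_j}\delta_n y_n:\delta\in\{0,1\}^{B_j}\}=\{\sum_{k=k_{j-1}+1}^{k_j}i_k\alpha_k:0\leq i_k\leq N_k\}$ for some $0=k_0<k_1<k_2<\dots$ (in the example $\{0,5,10,15\}$ merges the first two blocks). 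Your conclusions survive this weakening: if such a block map is injective, then $\prod_{k=k_{j-1}+1}^{k_j}(N_k+1)=2^{|B_j|}$, and since each factor $N_k+1\geq 2$ divides a power of two, each $N_k+1$ is itself a power of two; the slim direction is likewise unaffected. But the merged-block lemma still has to be proved, and your scale-peeling argument must be reorganised to allow merges, which occur exactly when consecutive scales are commensurable ($\alpha_k=(N_{k+1}+1)\alpha_{k+1}$), i.e.\ exactly when the cut at $D_2$ fails to separate scales; this is what the gap-lemma machinery of \cite{BFPW,BFPW2} is designed to handle.
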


\begin{proof}Let $(x_n)=(\alpha_k,N_k)$ and assume that $E(y_m)=E(\alpha_k,N_k)$, in other words $(y_m)$ is a representation of $(\alpha_k,N_k)$. By $r_k$ we denote the $k$-th remainder of $\sum_{n=1}^\infty x_n$, that is $r_k=\sum_{i>k}x_i$. Note that $r_{N_1}=\sum_{i>1}N_i\alpha_i$. Then $(r_{N_1},\alpha_1)$ is the largest gap from the left. Moreover $(\alpha_1+r_{N_1},2\alpha_1), (2\alpha_1+r_{N_1},3\alpha_1),\dots, ((N_1-1)\alpha_1+r_{N_1},N_1\alpha_1)$ are gaps as well. By the Second Gap Lemma there is a $p_1\in\N$ such that 
\[
\{0,\alpha_1,2\alpha_1,3\alpha_1,\dots,N_1\alpha_1\}=\{\sum_{i=1}^{p_1}\varepsilon_i y_i:\varepsilon\in\{0,1\}^{p_1}\}.
\]
Then by the Second Gap Lemma $y_{p_1}=\alpha_1$. Note that $y_i$, for $i\leq p_1$, are multipliers of $\alpha_1$. Note that $\{0,1\}^{p_1}\ni\varepsilon\mapsto\sum_{i=1}^{p_1}\varepsilon_i y_i$ is one-to-one if and only if the set $\{\sum_{i=1}^{p_1}\varepsilon_i y_i:\varepsilon\in\{0,1\}^{p_1}\}$ has exactly $2^{p_1}$ elements, which in turn is equivalent to $N_1=2^{p_1}-1$. Proceeding inductively we find an increasing sequence $(p_k)$ of indexes such that
\[
\{0,\alpha_k,2\alpha_k,3\alpha_k,\dots,N_k\alpha_k\}=\{\sum_{i=p_{k-1}+1}^{p_k}\varepsilon_i y_i:\varepsilon\in\{0,1\}^{(p_{k-1},p_k]}\}\text{, }y_{p_k}=\alpha_k\text{ and }\sum_{i=p_{k-1}+1}^\infty y_i<y_{p_k}.
\]
Moreover, $\{0,1\}^{p_k}\ni\varepsilon\mapsto\sum_{i=1}^{p_k}\varepsilon_iy_i$ is one-to-one if and only if $N_k=2^{p_k}-1$.

If there is $k$ such that $N_k\neq 2^{p_k}-1$, then clearly $(\alpha_k,N_k)$ does not have unique representation. If there is an infinite sequence $(k_j)$ with $N_{k_j}\neq 2^{p_{k_j}}-1$ and $(y_m)$ is a representation of $(\alpha_k,N_k)$, then for any $j$ we can find two distinct $\varepsilon^j,\bar{\varepsilon}^j\in\{0,1\}^{(p_{k_j-1},p_{k_j}]}$ such that   
\[
\sum_{i=p_{k_j-1}+1}^{p_{k_j}}\varepsilon^j_i y_i=\sum_{i=p_{k_j-1}+1}^{p_{k_j}}\bar{\varepsilon}^j_i y_i
\]
Consider a point $x=\sum_{j=1}^\infty \sum_{i=p_{k_j-1}+1}^{p_{k_j}}\varepsilon^j_i y_i$. Now, for any set $X\subset\N$, we have
\[
x=\sum_{j\in X} \sum_{i=p_{k_j-1}+1}^{p_{k_j}}\varepsilon^j_i y_i+ \sum_{j\notin X} \sum_{i=p_{k_j-1}+1}^{p_{k_j}}\bar{\varepsilon}^j_i y_i.
\]
Thus the point $x$ has continuum many representations in $E(y_m)$. Therefore $(y_m)$ is not a slim representation of $(\alpha_k,N_k)$, and consequently $(\alpha_k,N_k)$ does not have slim representation. 
\end{proof}

Using Theorem \ref{TheoremSemiFast} and results from \cite{BFPW2} we can extend the list of equivalent conditions.
\begin{corollary}
Let $(\alpha_k,N_k)$ be a semi-fast convergent sequence. The following conditions are equivalent
\begin{itemize}
    \item[(i)] $(\alpha_k,N_k)$ has a unique  representation;
    \item[(ii)] every $N_k$ is of the form $2^{n_k}-1$;
    \item[(iii)] $E(\alpha_k,N_k)$ is a central Cantor set
    \item[(iv)] $E(\alpha_k,N_k)$ has a representation $E(y_n)$ where $(y_n)$ is fast convergent.
\end{itemize}
\end{corollary}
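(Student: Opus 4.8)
The plan is to treat the equivalence (i)$\iff$(ii) as already settled and to anchor both (iii) and (iv) to that pair through the fast-convergent representation constructed in the step (ii)$\Rightarrow$(iv). Since condition (ii) is exactly the hypothesis of the uniqueness half of Theorem~\ref{TheoremSemiFast}, the equivalence (i)$\iff$(ii) is immediate, so it suffices to close a cycle running through (iii) and (iv).

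First I would prove (ii)$\Rightarrow$(iv) by an explicit binary recoding of each block. Given $N_k=2^{n_k}-1$, I replace the block of $N_k$ equal copies of $\alpha_k$ by the $n_k$ terms $\alpha_k,2\alpha_k,4\alpha_k,\dots,2^{n_k-1}\alpha_k$. Uniqueness of binary expansions shows that the subsums of this block run bijectively through $\{0,\alpha_k,2\alpha_k,\dots,N_k\alpha_k\}$, so the resulting sequence $(y_n)$ satisfies $E(y_n)=E(\alpha_k,N_k)$; that is, $(y_n)$ is a representation. Listing each block in decreasing order, the tail following the term $2^j\alpha_k$ equals $(2^j-1)\alpha_k+\sum_{i>k}N_i\alpha_i$, so the fast-convergence inequality $2^j\alpha_k>(2^j-1)\alpha_k+\sum_{i>k}N_i\alpha_i$ reduces to the semi-fast inequality $\alpha_k>\sum_{i>k}N_i\alpha_i$ and holds for every term; the same inequality shows that $(y_n)$ is nonincreasing. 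Hence $(y_n)$ is fast convergent, giving (iv). Since fast (i.e.\ quick) convergence already forces $E(y_n)=U(y_n)$, by the remark following Theorem~\ref{Kakeya}, this representation witnesses (i) as well, so (iv)$\Rightarrow$(i) comes for free and the triple (i), (ii), (iv) is equivalent.

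To fold in (iii) I would argue (iv)$\Leftrightarrow$(iii) using the symmetric splitting of a fast-convergent achievement set. If $(y_n)$ is fast convergent, then $E(y_n)=\big([0,r_1]\cap E(y_n)\big)\cup\big(y_1+([0,r_1]\cap E(y_n))\big)$ with central gap $(r_1,y_1)$; because $r_1+y_1$ is the total sum, this gap is centred and the two halves are congruent translates, and the same picture repeats homogeneously on every level, which is exactly the definition of a central Cantor set, giving (iv)$\Rightarrow$(iii). Conversely, a central Cantor set with level lengths $L_n\downarrow 0$ is $E(y_n)$ for the translations $y_n=L_{n-1}-L_n$, and the positivity of the central gaps, $L_{n-1}>2L_n$, is equivalent to $y_n>\sum_{m>n}y_m$, i.e.\ to fast convergence, which yields (iii)$\Rightarrow$(iv). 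These two correspondences between central Cantor sets and fast-convergent achievement sets are essentially the content of the relevant results in \cite{BFPW2}.

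The routine parts are the bijectivity of the block recoding and the two inequality verifications; the only genuine care is needed in the homogeneity claim for (iv)$\Rightarrow$(iii), where one must check that at each level all $2^{n-1}$ surviving intervals are exact translates of one another and therefore split with a common ratio, so that the resulting set is central rather than merely a self-similar Cantor set. I expect this uniform-translate bookkeeping, rather than any single estimate, to be the main obstacle.
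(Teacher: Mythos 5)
Your proposal is correct, and it is worth recording how it relates to the paper: the paper gives this corollary no proof at all beyond the sentence that it follows from Theorem~\ref{TheoremSemiFast} together with results of \cite{BFPW2}, i.e.\ the links between (ii), (iii) and (iv) are outsourced to the literature, whereas you have unpacked them into a self-contained argument. Your skeleton --- (i)$\Leftrightarrow$(ii) from the theorem, (ii)$\Rightarrow$(iv) by binary recoding of each block, (iv)$\Rightarrow$(i) because a quickly convergent representation has all expansions unique (the remark after Theorem~\ref{Kakeya}), and (iii)$\Leftrightarrow$(iv) via the dictionary $y_n=L_{n-1}-L_n$, $r_n=L_n$ --- closes the cycle, and each step checks out: the recoded block $\alpha_k,2\alpha_k,\dots,2^{n_k-1}\alpha_k$ has subsum set exactly $\{0,\alpha_k,\dots,N_k\alpha_k\}$, so the achievement set is unchanged block by block and hence globally, and the fast-convergence inequality for every recoded term collapses to the semi-fast inequality $\alpha_k>\sum_{i>k}N_i\alpha_i$. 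Two small remarks. First, the homogeneity you single out as the main obstacle in (iv)$\Rightarrow$(iii) is in fact automatic: the level-$n$ pieces of $E(y_n)$ are the sets $\sum_{i\le n}\varepsilon_i y_i+\bigl([0,r_n]\cap E(y_n)\bigr)$, all translates of the single set $E((y_m)_{m>n})$ sitting in intervals of common length $r_n$, and each splits at the next level with the same data $(r_{n+1},y_{n+1})$, so the ratio is constant across every level by construction. Second, your route yields slightly more than the paper records, namely an explicit fast convergent (hence uniquely representing) sequence witnessing (i) and (iv) simultaneously; the cost is length, where the paper settles for a citation.
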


By Theorem \ref{TheoremSemiFast} there is a Cantor set having no slim representation. On the other hand any finite union of intervals \textit{has} slim representation, which is a simple observation. This lead us to the following problem. 
\begin{problem}
Is there a Cantorval having no slim representation?
\end{problem}
Since the Guthrie-Nymann Cantorval $E(3,2;\frac{1}{4})$ has no $\mathfrak{c}$-points, it has slim representation. Moreover, $E(3,2;\frac14)$ is minimal Cantorval in that sense that $E(3,2;q)$ is a Cantor set for every $q<\frac14$. We would like to know if such minimality implies slimness.
\begin{problem}
Assume that $E(a_1,\dots a_n;q)$ is a Cantorval such that for any $q'<q$, $E(a_1,\dots a_n;q')$ is a Cantor set. Is $E(a_1,\dots a_n;q)$ slim? 
\end{problem}

\end{document}